\numberwithin{equation}{section} 
\theoremstyle{definition}
\newtheorem{definition}{Definition}[section]
\newtheorem{theorem}[definition]{Theorem}
\newtheorem{lemma}[definition]{Lemma}
\newtheorem{corollary}[definition]{Corollary}
\newtheorem{proposition}[definition]{Proposition}
\newtheorem{remark}[definition]{Remark}
\newtheorem{example}[definition]{Example}
\newtheorem{assumption}[definition]{Assumption}
\newcommand{\DS}{\displaystyle}
\newcommand{\bH}{{\mathbf{H}}}
\newcommand{\cA}{{\mathcal A}}
\newcommand{\cB}{{\mathcal B}}
\newcommand{\cC}{{\mathcal C}}
\newcommand{\cD}{{\mathcal D}}
\newcommand{\cF}{{\mathcal F}}
\newcommand{\cK}{{\mathcal K}}
\newcommand{\cL}{{\mathcal L}}
\newcommand{\cR}{{\mathcal R}}
\newcommand{\cT}{{\mathcal T}}
\newcommand{\cU}{{\mathcal U}}
\newcommand{\cY}{{\mathcal Y}}
\newcommand{\Pt}{{\partial }}
\newcommand{\bbC}{{\mathbb C}}
\newcommand{\bbE}{{\mathbb E}}
\newcommand{\bbN}{{\mathbb N}}
\newcommand{\bbP}{{\mathbb P}}
\newcommand{\bbR}{{\mathbb R}}
\newcommand{\bbZ}{{\mathbb Z}}
\newcommand{\bbI}{{\mathbb I}}
\def\fu{{\mathfrak{u}}}
\newcommand{\ve}{{\varepsilon}}
\newcommand{\del}{{\delta}}
\newcommand{\Del}{{\Delta}}
\newcommand{\sig}{{\sigma}}
\newcommand{\al}{{\alpha}}
\newcommand{\be}{{\beta}}
\newcommand{\ka}{{\kappa}}
\newcommand{\la}{{\lambda}}
\definecolor{OliveGreen}{rgb}{0,0.6,0}
\def\DS{\displaystyle}
\def\tX{{\tilde X}}
\def\fd{{\mathfrak{d}}}
\author{Dmitry Dolgopyat and Yeor Hafouta}
\title{Rates of convergence in CLT and ASIP for sequences of expanding maps}
\begin{document}
\maketitle
%\begin{abstract}
%\end{abstract}

%\title[]{Limit theorems for some non-stationary sequences}
 %\vskip 0.1cm
% \author{Yeor Hafouta \\
%\vskip 0.1cm
%Department  of Mathematics\\
%The Ohio State University}
%Jerusalem, Israel}%
%\address{
%Institute of Mathematics, The Hebrew University, Jerusalem 91904, Israel}
%\email{yeor.hafouta@mail.huji.ac.il, hafuta.1@osu.edu}

\renewcommand{\theequation}{\arabic{section}.\arabic{equation}}
\pagenumbering{arabic}

\begin{abstract} 
We prove Berry-Esseen theorems and
 the almost sure invariance principle with rates for partial sums of the form $\DS S_n=\sum_{j=0}^{n-1}f_j\circ T_{j-1}\circ\cdots\circ T_1\circ T_0$ where $f_j$ are functions with uniformly bounded ``variation" and $T_j$ is a sequence of expanding maps. 
Using symbolic representations similar result follow for maps $T_j$ in a small $C^1$ neighborhood of an Axiom A map and H\"older continuous functions $f_j$. All of our results are already new for a single map $T_j=T$ and a sequence of 
 different functions $(f_j)$.
\end{abstract}

%\section{Limit theorems for sequential dynamical systems}

\section{Introduction}
\subsection{Non-autonomous dynamical systems}
A great discovery made in the last century is
 that deterministic systems  could
 exhibit random behavior. One of the most notable results in this direction is the fact that ergodic averages of deterministic systems could satisfy the 
Central Limit Theorem (CLT). 
Since then  statistical properties of autonomous hyperbolic dynamical systems have been studied extensively. However,  many systems appearing in nature are time dependent due to an interaction with the outside world. In the context of dynamical systems this leads to the study of dynamics formed by a composition of different transformations rather than a single one. Such systems are often called sequential/time-dependent/non-autonomous dynamical systems.

 Many powerful tools developed for studying autonomous systems are unavailable in non autonomous setting. 
In particular, the spectral approach developed by Nagaev \cite{Nag1} 
and  extended to dynamical 
systems setting by Guivarch and Hardy \cite{GH}, provides a powerful tool for obtaining asymptotics expansions
in limit theorems for dynamical systems.
It turns out that 
{\em complex Perron Frobenius Theorem} proven by Hafouta and Kifer in \cite{HK}
(building on a previous work of Rugh \cite{Rug} and Dubois \cite{Du09}) provides a convenient tool for asymptotic computations
of the characteristic functions in the setting of Markov chains and dynamical systems.
This theorem has already found multiple applications to limit theorems \cite{DH, DDH1, Du11, HK, H-JSP, Nonlin, H-YT, H-Adv}.
The goal of the present paper is to study the rate of convergence 
in the CLT (aka Berry-Esseen theorems) and the almost sure invariance principle\footnote{See \eqref{ASIP def}.} (ASIP)  for non-autonomous dynamical systems without making any assumptions
on the growth of variance of the underlying partial sum.   In a forthcoming  work
\cite{DH LLT}  local limit theorems will be considered.

A particular case of a sequential dynamical systems  are random dynamical systems.
Ergodic theory of random dynamical systems has attracted a lot of attention in the past decades, see \cite{Kifer86, LiuQian95, Cong97, Arnold98, Crauel2002} and \cite{KiferLiu}. This includes, for instance, the theory of random invariant measures, entropy theory, thermodynamic formalism, multiplicative ergodic theory and many other classical topics in ergodic theory.
Ergodic aspects  of (non-random) sequential dynamical systems were studied for the first time in \cite{BB-84, BB-86}, see also \cite{AF}, \cite[Sections 1-2]{CR} and \cite{BBH05}.
 We refer to \cite[Section 3]{CR} and \cite{Nonlin} for examples of expanding maps and to
 \cite{Bk95, AF, PR03} for some examples of sequential hyperbolic sequences (see also Sections \ref{Eg Sec}, \ref{Comp Sec} and Appendix \ref{App C}). %\cite{BBH05}

Next we discuss statistical properties.
Decay of correlations for random dynamical systems were obtained in \cite{Buzzi, Baldi, AS}. %ABR?
 Large deviations were obtained in \cite{Kifer 1996}  and the 
CLT in \cite{Kifer 1998}.  
Recently limit theorems for random dynamical systems have attracted a lot of attention. 
For hyperbolic maps in a $C^{3+\ve}$ neighborhood of a $C^{3+\ve}$ Anosov map, the CLT was obtained in \cite{DFGTV3} (see also \cite{ALS, CLBR}).  Almost sure invariance principles (ASIP, see \eqref{ASIP def})  were obtained in \cite{STE, STSU,DFGTV1, DDH CIRM, DDH, H-Adv, Su1, Su2}, while Berry-Esseen theorems (see \eqref{BE def}) were obtained in \cite{HK,DDH1, H-YT,H}. 
Concerning statistical properties of non-random expanding sequential systems, the sharp 
CLT was obtained in \cite{CR} (see also \cite{ANV, NSV}).  
  For hyperbolic systems the CLT follows from \cite{Bk95} under the assumption  that  variance of the underlying partial sum $S_n$ grows  faster than $n^{2/3}$.
Berry-Esseen theorems in the sequential setup were only obtained in \cite{Nonlin} for 
some classes of maps, under the assumption that $\text{Var}(S_n)$ grows linearly fast in $n$. 
In \cite{HNTV} ASIP rates were obtained under some growth assumptions on $\text{Var}(S_n)$ (see \S \ref{SSOurRes}).

\subsection{Our results}
\label{SSOurRes}
Our first result concerns optimal CLT rates. A classical result due to Berry and Esseen \cite{Berry, Ess0, Ess} asserts that for partial sums $\DS S_n=\sum_{j=1}^n X_j$ of zero mean
of uniformly bounded iid random variables $X_j$ the following optimal uniform CLT rates hold
\begin{equation}\label{BE def}
\Del_{0,n}:=\sup_{t\in\bbR}\left|\bbP(S_n/\sig_n\leq t)-\Phi(t)\right|=O(\sig_n^{-1}),\quad \sig_n=\|S_n\|_{L^2}
\end{equation}
where $\Phi(t)$ is the standard normal distribution function. 
By now the optimal convergence rate in the CLT was obtained for wide classes of stationary Markov chains \cite{Nag1, HH} and 
other weakly dependent random processes including 
chaotic dynamical systems \cite{RE,GH,HH,GO, Jir0,Jir}, random  dynamical systems \cite{HK,DDH1, H-YT}
uniformly bounded stationary sufficiently fast $\phi$-mixing sequences \cite{Rio},
$U$-statistics \cite{CJ,GS} and locally dependent random variables \cite{BR,BC,CS}. However, in all of these processes the variance of $S_n$ is of linear order in the number of summands $n$. 
To the best of our knowledge, the only case where optimal rate was obtained without any growth rates on the variances is for additive functional of uniformly elliptic inhomogeneous Markov chains  \cite{DH}. 
In the  present paper we obtain optimal CLT rates (in various forms) for Birkhoff sums generated by a sequence of expanding maps and 
sufficiently regular functions. This was obtained in \cite{Nonlin} for H\"older continuous functions when $\text{Var}(S_n)\geq cn$ for some $c>0$. 
Here we consider more general maps and more general functions (e.g. BV). Most importantly we will not assume any kind of growth rates for    $\text{Var}(S_n)$. 

Our main results have applications to sequence of maps in a $C^1$ neighborhood  of a given $C^2$-Axiom A map (see Appendix \ref{App C}). In \cite{DFGTV3} limit theorems were studied for random dynamical systems in $C^{3+\ve}$ neighborhood of a $C^{3+\ve}$ Anosov maps and Birkhoff sums formed by random $C^{2+\ve}$ functions. 
 Compared with \cite{DFGTV3} we can perturb more general maps in a weaker
norm, consider functions which are only H\"older continuous  and treat non-random systems.

What we will actually prove is stronger than uniform rates. In Theorem \ref{BE}(i) we will show that for every $p>0$,
\begin{equation}\label{Non U BE}
\Del_{p,n}:=\sup_{t\in\bbR}(1+|t|^p)\left|\bbP(S_n/\sig_n\leq t)-\Phi(t)\right|=O(\sig_n^{-1})
\end{equation}
The fact that we can take can positive $p$ allows applications to optimal CLT rates  in $L^p$  (see Theorem \ref{BE}(ii)), to Gaussian expectation estimates (Theorem \ref{BE}(iii)) and to optimal CLT rates in the Wasserstein distances (Theorem \ref{ThWass}).

The proof of \eqref{Non U BE} consists of several ingredients.  First, using a general result from \cite{H} it is enough to verify a logarithmic growth assumption introduced in \cite{DH}. This is done in   Proposition \ref{Growth Prop}. To prove this proposition we use ideas 
from \cite{DH} together with an appropriate martingale-coboundary decomposition. This decomposition is needed in order to obtain the moment estimates in Proposition \ref{Mom prop}, which are proven using the Burkholder inequality and ideas from \cite{CR}. 
Martingale-coboundary decomposition uses a version of real Ruelle-Perron-Frobenius (RPF), which is proven in Theorem \ref{RPF 0}. 
 The second tool needed to prove Proposition \ref{Growth Prop} is an extension of Theorem \ref{RPF 0} to complex operators (Theorem \ref{CMPLX RPF}). This result generalizes the complex RPF theorem in \cite{HK,Nonlin} and is proven using simpler arguments (see Appendix~\ref{AppPert}) which result in conditions which are easier to verify.

Almost sure invariance principle (ASIP) is
a stronger version of the CLT. The ASIP states
 that there is a coupling of $(S_n)$ with a Brownian motion $W(t)$ such that
\begin{equation}\label{ASIP def}
\delta_n:=\left|S_n-W(\sig_n^2)\right|=o(\sig_n) \,\,\,\,\text{a.s.}.
\end{equation}
The ASIP is an important statistical tool and it implies the functional CLT  and the law of iterated logarithm (see \cite{PhilStaut}). 
 Previously
the ASIP  was studied for autonmous hyperbolic systems
\cite{MN1, MN2, GO1, CM15} and their random counterparts,  \cite{DFGTV1,DDH,DDH CIRM, DHS, H-Adv, STE, STSU,  Su2, Su1}. In all of these cases $\text{Var}(S_n)$ grows linearly fast in $n$. The only existing result in which weaker growth rates are allowed  is \cite{HNTV}, where the ASIP rates $o(\sig_n^{1-\varepsilon})$ were obtained under the assumption that $\sig_n^2\geq cn^{1/2+2\kappa}$, for some $\kappa>0$.
In the present paper we combine the ideas in \cite{DDH,Haf SPL} with the aforementioned martingale methods and the sequential RPF theorem developed in the proof of \eqref{Non U BE} and obtain the ASIP rates $\delta_n=O(\sig_n^{\frac12+\varepsilon})$ for every 
$\varepsilon>0$, without growth assumptions on $\text{Var}(S_n)$.

Our setup includes a reference measure $m_0$ (e.g. Lebesgue or a time $0$ Gibbs measure)
and a notion of variation of functions (e.g. variation on $[0,1]$ or H\"older 
 continuity). Then our result hold true with respect to any initial measure $\mu_0$ which is absolutely continuous with respect to the reference measure, with bounded density with finite variation. 
%{\color{blue} \sout{As will be shown in Theorem \ref{RPF 0}, this class of measures includes the unique sequence of equivariant measures (at time $0$) which are absolutely continuous  with respect to the reference measures}}. 
For autonomous systems the fact that the weak limit is preserved when changing densities is called Eagleson's theorem \cite{Eag76} (see also \cite{GO2, ZW07}). Eagleson's theorem concerns the asymptotic behavior, and to show that the rates are persevered it is reasonable to require that the density is sufficiently regular. In \cite{BulLMS} non-stationary versions of Eagleson's theorem were discussed. When applied to the setup of this paper, the results in \cite[\S 3.3.1]{BulLMS} show that optimal uniform CLT rates are preserved under a change of density with bounded variation if the variance of the underlying sum grows linearly fast in $n$ (under one of the measures). 
 Here we show that the non-uniform optimal rate holds for all the measures in the above class  without any growth assumptions on the variance. Additionally, the above ASIP rates are preserved under such changes of measure (c.f. \cite[Corollary 1.3]{GO2} in the stationary case). 
\subsection{The layout of the paper}
In Section \ref{Sec Prim} we describe the classes of maps we consider.
Section \ref{Sec2} presents our main results. Sections \ref{Eg Sec} and \ref{Comp Sec} are devoted to examples.
In the former section we present specific examples fitting into our abstract framework: expanding and piecewise
expanding maps, subshifts of finite type, etc. In the next section we show that our abstract setting includes the
setups of \cite{CR} and \cite{Nonlin}
as special cases. Section \ref{Sec Mom} discusses the moment estimates which play a key role in our analysis.
Section \ref{Sec Log} is devoted to the analysis of of the characteristic functions near the origin.
This analysis plays a key role in the proof of our main results given in Section \ref{BE sec}. In a forthcoming 
work, including \cite{DH LLT},
we combine the results of Section~\ref{Sec Log} with the estimates of the characteristic 
function away from the origin to get further asymptotic results related to limit theorems.

The examples  in Sections  \ref{Eg Sec} and the proof of the main results rely on many non-autonomous versions of  known results for autonomous dynamical systems. Additionally, we find it more reader friendly to include a presentation of our results for hyperbolic maps  in a separate section. 
For these reasons the paper has four appendixes.
Appendix~\ref{Sec app A}  is devoted to mixing properties of sequential Gibbs measures. It generalizes the fact that for a topologically mixing expanding system on a compact manifold there is a unique absolutely continuous invariant measure, which is mixing exponentially fast for H\"older or BV observables.
 Appendix \ref{ScSFT-Gibbs} concerns extension of classical results for subshift of finite types to their non-autonomous counterparts,  including the theory of sequential 
 Gibbs measures.  Appendix \ref{App C} is about  non-autonomous hyperbolic systems formed by a sequence of maps $(T_j)$ in a small $C^1$-neighborhood of a given Axiom A  map.
We show that such systems are sequentially H\"older  conjugated to the Axiom A map, which will yield that all our results hold true for these systems. %Compared with the approach in \cite{DFGTV2}, expect for the fact that we do not consider random systems we do not assume that $T$ is Anosov and we allow $C^1$ perturbations and not $C^{3+\ve}$ perturbations. Moreover, our results hold for H\"older continuous functions and not only for sufficiently smooth ones. 
Lastly, Appendix \ref{AppPert}
contains a short proof of the complex sequential Ruelle-Perron-Frobenius Theorem following 
the arguments of \cite{HH}. The assumptions of our Theorem \ref{PertThm} are easier to verify in specific examples than the assumptions of the corresponding results in \cite{HK,Nonlin}.

%In this section we will formulate our main results for sequential expanding dynamical systems in a general framework, which is close to the setup of \cite{Buzzi}. In Sections \ref{Eg Sec} and \ref{Comp Sec} we will compare them  with other existing assumptions in literature and  provide several classes of examples. 

\section{Preliminaries}\label{Sec Prim}
\subsection{Setup}
\,

Our setup here will be a sequential (uniform) version of \cite{Buzzi}.
Let $(X_j,\cB_j,m_j)_{j=0}^\infty$ be a sequence of probability spaces. For a measurable function $h:X_j\to\bbC$ 
denote $\|h\|_p=\|h\|_{p,j}=\|h\|_{L^p(X_j,\cB_j,m_j)}$. We suppose that there are notions of variation $v_j:L^1(X_j,m_j)\to [0,\infty]$ satisfying

\begin{enumerate}
    \item [(V1)] $v_j(th)=|t|v_j(h)$, for every $t\in\bbC$ and $h:X_j\to\bbC$;
    
    \item [(V2)] $v_j(g+h)\leq v_j(g)+v_j(h)$ for every $g,h:X_j\to\bbC$;
    
    \item [(V3)] $\|h\|_\infty\leq \|h\|_1+C_{\mathrm{var}}v_j(h)$ \text{for some constant }$C_{\mathrm{var}}>0$ (independent of $j$);
    
  % \item [(V4)] for any $C<\infty$, $\{h:X_j\to\bbR: \|h\|_1+v_j(h)\leq C\}$ is $L^1$-compact.
    
    \item[(V4)] \hskip-1mm the functions $\textbf{1}$ taking the constant value $1$ have finite variation and 
    $\DS \sup_j v_j(\textbf{1})\!\!<\!\!\infty$;

    \item[(V5)] there is a constant $C>0$ such that for all $j$ and $f,g:X_j\to\bbC$ we have $v_j(fg)\leq C(\|f\|_\infty v_j(g)+\|g\|_\infty v_j(f))$;
    
   \item[(V6)] if $h$ is a positive function with bounded variation bounded below by a constant $c>0$ then $v_j(1/h)\leq C(c,v_j(h)+\|h\|_{1,j})$, where $C(x,y)$ is a function on $\bbR_+\times\bbR_{+}$ which is increasing in both variables $x$ and $y$;
    
   \item[(V7)] the space of bounded functions with bounded variation is dense in $C(X_j)$ (the space of continuous bounded functions on $X_j$).
   % $\{h:X_j\to\bbR_+: \|h\|_1=1\,\text{ and }\,v_j(h)<\infty\}$ is dense in  $\{h:X_j\to\bbR_+: \|h\|_1=1\}$.
\end{enumerate}
\begin{example}\label{Eg}

(1) $X_j$ are metric spaces such that $\DS \sup_j\text{diam}(X_j)<\infty$ and $v_j$ is the H\"older constant corresponding to some exponent $\al\!\!\in\!\!(0,1]$ independent of~$j$.

(2) $X_j$ are Riemannian manifolds  such that $\DS \sup_j\text{diam}(X_j)<\infty$  and $v_j(g)=\sup|Dg|$.

(3) $X_j=[0,1]$, $m_j\!\!=\!\!\text{Lebesgue}$ and  each $v_j(g)$ is the usual variation of a function $g$. 

(4) $X_j=X$ is  bounded subset of $\bbR^d$, $d>1$ and all $m_j$ coincide with the normalized Lebsegue measure on $X$.  Moreover,
$$
v_j(g)=\sup_{0<\ve\leq \ve_0}\ve^{-\al}\int_{X}\text{Osc}(g,B_\ve(x))dx
$$
for some constants $\ve_0>0$ and $\al\in(0,1]$,
where $B_\ve(x)$ is the  ball  of radius $\ve$ around a point $x$ and for a set $A$,
$\DS
\text{Osc}(g,A)=\sup_{x_1,x_2\in A\cap X}|g(x_1)-g(x_2)|.
$
\end{example}

\vskip2mm
Next, given a measurable function $h:X_j\to\bbC$, denote
$$\|h\|_{BV}=\|h\|_{BV,j}=\|h\|_{1,j}+v_j(h).$$ Then $\|\cdot\|_{BV}$ is a complete norm on the space of bounded functions $B_j$ with finite variation. Note that $\|\cdot\|_{BV,j}$ are  equivalent to the norms $\DS \|g\|_{BV,\infty,j}=\|g\|_{\infty,j}+v_j(g),$
uniformly in $j$.

Let $T_j:X_j\to X_{j+1},$ $j\geq 0$ be a sequence of measurable maps, such that
\begin{equation}\label{V8}
    \sup_j\sup_{h: v_{j+1}(h)\leq 1}v_{j}(h\circ T_j)<\infty.
    \end{equation}
We also assume that the maps are absolutely continuous,  that is, $(T_j)_*m_j\ll m_{j+1}$. Let $\cL_j$ denote the transfer operator of $T_j$ with respect to the measures $m_j$ and $m_{j+1}$. Namely, if $\ka_j=\rho_jdm_j$ for some $\rho_j\in L^1(X_j,m_j)$ then $\cL_j \rho_j:X_{j+1}\to\bbR$ is the Radon--Nikodym derivative of the  measure $(T_j)_*\ka_j$. Then $\cL_j$ is the unique linear operator satisfying the duality relation:
\begin{equation}\label{Duality}
\int (f\circ T_j)g dm_j=\int f(\cL_j g)dm_{j+1} 
\end{equation}
for all bounded measurable functions $g$ on $X_{j}$ and $f$ on $X_{j+1}$.
Denote
$$
\cL_j^n=\cL_{j+n-1}\circ\cdots\circ\cL_{j+1}\circ\cL_j.
$$
We make three assumptions which are sequential versions of the assumptions in \cite{Buzzi}.

\begin{enumerate}
    \item [(LY1)] $\DS \sup_j\|\cL_j\|_{BV}<\infty$
    
    \item[(LY2)] There are constants $\rho\in(0,1)$, $K\geq 1$ and $N\in\bbN$ such that for every $j$ and a real function $h\in B_j$ we have 
    $$
    v_{j+N}(\cL_j^N h)\leq \rho v_j(h)+K\|h\|_1.
    $$
    
    \item [(SC)] For $a>0$ let 
    $\DS
    \cC_{j,a}=\{h\in L^1(m_j): h\geq0, \;\; v_j(h)\leq am_j(h)\}.
    $
    Then for every $a$ there are $n(a)\geq 1$ and $\al(a)>0$ such that for all $j$, 
    $n\geq n(a)$ and $h\in \cC_{a,j}$ 
    $$
    \text{ess-inf }\cL_j^nh\geq \al(a)m_j(h).
    $$
\end{enumerate}
Note that
(LY) stands for ``Lasota Yorke" and (SC) stands for ``sequential covering".

In  Section \ref{Eg Sec} we will verify the above assumptions for particular examples.
% but for the time being we will formulate our main results  under the above assumptions.

\begin{remark}
By iterating (LY2) it follows that there is a constant $K_0>0$ such that for all $j$ and all $n\geq N$  and a real function $h\in B_j$ we have
\begin{equation}\label{LY Iter}
v_{j+n}(\cL_j^n h)\leq \rho^{1+n-N} v_j(h)+K_0\|h\|_1.
\end{equation}
\end{remark}

%Using the above remark, we get the following result.
\begin{lemma}\label{Bound1 lemma}
Under (LY1) and (LY2)
we have 
$$
\sup_j\sup_n\|\cL_j^n\textbf{1}\|_{\infty}<\infty
$$
where $\textbf{1}$ denotes the function taking the constant value $1$, regardless of its domain.
\end{lemma}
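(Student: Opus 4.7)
The plan is to control $\|\cL_j^n\textbf{1}\|_\infty$ by combining an $L^1$ bound with a variation bound, and then invoking property (V3).

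First I would observe that $\cL_j$ preserves the integral of positive functions: applying the duality relation \eqref{Duality} with $f=\textbf{1}$ on $X_{j+1}$ and any $g\ge 0$ on $X_j$ gives $\int \cL_j g\, dm_{j+1}=\int g\, dm_j$. Iterating and using that $m_0$ is a probability measure, we obtain $\|\cL_j^n\textbf{1}\|_1=\int \cL_j^n\textbf{1}\, dm_{j+n}=\int \textbf{1}\, dm_j=1$ for all $j$ and $n$.

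Next I would bound the variation $v_{j+n}(\cL_j^n\textbf{1})$ uniformly in $j$ and $n$. For $n\ge N$, the iterated Lasota--Yorke estimate \eqref{LY Iter} gives
\[
v_{j+n}(\cL_j^n\textbf{1})\le \rho^{1+n-N}v_j(\textbf{1})+K_0\|\textbf{1}\|_1\le \rho\cdot \sup_{k}v_k(\textbf{1})+K_0,
\]
which is finite by (V4) and since $\rho\in(0,1)$. For $0\le n<N$, I would iterate (LY1) a finite number of times: $\|\cL_j^n\textbf{1}\|_{BV,j+n}\le (\sup_k\|\cL_k\|_{BV})^n\|\textbf{1}\|_{BV,j}$, which is uniformly bounded (again using (V4) to control $\|\textbf{1}\|_{BV,j}$ uniformly in $j$, since $\|\textbf{1}\|_{1,j}=1$). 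In particular $v_{j+n}(\cL_j^n\textbf{1})$ is uniformly bounded in this range as well.

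Finally, applying property (V3) at level $j+n$ yields
\[
\|\cL_j^n\textbf{1}\|_\infty\le \|\cL_j^n\textbf{1}\|_1+C_{\mathrm{var}}\,v_{j+n}(\cL_j^n\textbf{1})\le 1+C_{\mathrm{var}}\cdot M,
\]
where $M$ is the uniform bound on the variation obtained above. Taking the supremum over $j$ and $n$ gives the claim. I do not foresee a substantive obstacle: the only care needed is to split into the cases $n<N$ and $n\ge N$ so that the contraction in (LY2) can be applied and, separately, to use (V4) to assert that $\|\textbf{1}\|_{BV,j}$ is uniformly bounded in $j$.
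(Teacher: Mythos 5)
Your proof is correct and follows essentially the same route as the paper's: duality gives $\|\cL_j^n\textbf{1}\|_1=1$, the iterated Lasota--Yorke estimate \eqref{LY Iter} together with (LY1) gives a uniform bound on the variation, and (V3) combines the two. Your explicit split into the cases $n<N$ and $n\ge N$ just spells out what the paper compresses into the reference to ``the remark and (LY1).''
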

\begin{proof}
 It follows from the remark and (LY1) that   
 $\DS
\sup_j\sup_nv_{j+n}(\cL_j^n\textbf{1})<\infty.
$
Next, by property (V3) we have 
$\DS
\|\cL_j^n\textbf{1}\|_{\infty}\leq\|\cL_j^n\textbf{1}\|_{1}+ C_{var}v_{j+n}(\cL_j^n\textbf{1}).
$
To complete the proof of the lemma we note that by taking $f=\textbf{1}$ in \eqref{Duality} with $\cL_j^n$ instead of $\cL_j$ and $T_j^n$ instead of $T_j$ and using the positivity of $\cL_j^n$ we have $\|\cL_j^n\textbf{1}\|_{1}=\|\textbf{1}\|_1=1$.
\end{proof}

Appendix \ref{Sec app A} contains the following result.
\begin{theorem}\label{RPF 0}
Suppose (LY1), (LY2) and (SC).

(i) There is a sequence of positive  functions $h_j$ which is uniformly bounded in $B_j$ and uniformly bounded away from $0$ with $m_j(h_j)=1$, and constants $C>0$, $\del\in(0,1)$ such that for all $j\geq 0$ we have $\cL_jh_j=h_{j+1}$ and for all $n\in\bbN$,
\begin{equation}\label{Exp Conv Main}
\left\|\cL_j^n(\cdot)-m_j(\cdot)h_{j+n}\right\|_{BV}\leq C\del^n.
\end{equation}
(ii) Let $\mu_j:=h_j dm_j$.  Then $(T_j)_*\mu_j=\mu_{j+1}$.
Moreover if $\tilde\mu_j=g_j dm_j$ is another sequence satisfying 
\begin{equation}
\label{TildeInv}
(T_j)_*\tilde\mu_j=\tilde\mu_{j+1}
\end{equation}
then 
$\DS \lim_{n\to\infty}\|h_n-g_n\|_{1}=0$.
In fact, if $g_k \in BV$ for some $k$ then 
$\DS \lim_{n\to\infty}\|h_n-g_n\|_{BV}=0$ exponentially fast.
\end{theorem}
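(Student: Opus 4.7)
My plan is to prove Theorem \ref{RPF 0} by the classical Birkhoff--Hilbert cone contraction method adapted to the sequential setting. The key is to find cones in $BV$ on which the cocycle $(\cL_j)$ contracts the Hilbert projective metric exponentially with $j$-uniform rate.

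First, using the iterated Lasota--Yorke estimate \eqref{LY Iter} together with the identity $m_{j+n}(\cL_j^n h)=m_j(h)$ for $h\geq 0$ (from \eqref{Duality} with $f=\textbf{1}$), I would verify that for $a>0$ large the cones $\cC_{j,a}$ are eventually invariant: for $h\in \cC_{j,a}$ one has $v_{j+n}(\cL_j^n h)\leq (\rho^{1+n-N}a+K_0)\,m_{j+n}(\cL_j^n h)$, so $\cL_j^n \cC_{j,a}\subset \cC_{j+n,a}$ once $a>K_0/(1-\rho)$ and $n$ is large enough. Combining the pointwise lower bound from (SC), $\cL_j^{n(a)}h\geq \alpha(a)\,m_j(h)$, with the upper bound $\|\cL_j^{n(a)}h\|_\infty\leq(1+C_{\mathrm{var}}a)\,m_j(h)$ coming from (V3) and cone membership, I get that $\cL_j^{n(a)}(\cC_{j,a})$ lies in a sub-cone of Hilbert diameter at most $D:=\log((1+C_{\mathrm{var}}a)/\alpha(a))$, uniformly in $j$. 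Birkhoff's contraction theorem then yields contraction by $\tanh(D/4)<1$ per $n(a)$ iterations, so the Hilbert metric decays at some uniform rate $\delta\in(0,1)$. A standard Liverani-type comparison inequality between the Hilbert metric and $\|\cdot\|_{BV}$ converts this into
$$\bigl\|\cL_j^n f/m_j(f)-\cL_j^n g/m_j(g)\bigr\|_{BV}\leq C\delta^n,\qquad f,g\in \cC_{j,a}.$$

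To construct $(h_j)$, pick $h_0\in \cC_{0,a}$ with $m_0(h_0)=1$ which is bounded below, and set $h_j:=\cL_0^j h_0$. Then $\cL_jh_j=h_{j+1}$, $m_j(h_j)=1$, and uniform $BV$ bounds follow from (LY). The uniform lower bound $h_j\geq \alpha(a)$ for $j\geq n(a)$ follows from (SC), while the finitely many remaining $h_j$ are handled by selecting $h_0$ so that each of $h_0,\cL_0 h_0,\ldots,\cL_0^{n(a)-1}h_0$ is bounded below. To prove \eqref{Exp Conv Main}, I decompose $\phi\in BV$ as $\phi=m_j(\phi)h_j+\psi$ with $m_j(\psi)=0$, so that $\cL_j^n\phi-m_j(\phi)h_{j+n}=\cL_j^n\psi$. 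Choose $M\asymp \|\psi\|_{BV}$ large enough---feasible because (V3) controls $\|\psi\|_\infty$ in terms of $\|\psi\|_{BV}$ and (V4) bounds $v_j(\textbf{1})$ uniformly in $j$---so that both $\psi+M\textbf{1}$ and $M\textbf{1}$ lie in $\cC_{j,a}$ with common integral $M$. Applying the $BV$ estimate just derived to these two functions and subtracting gives $\|\cL_j^n\psi\|_{BV}=\|\cL_j^n(\psi+M\textbf{1})-M\cL_j^n\textbf{1}\|_{BV}\leq CM\delta^n\leq C'\|\phi\|_{BV}\delta^n$, which yields (i).

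For (ii), the invariance $(T_j)_*\mu_j=\mu_{j+1}$ is equivalent to $\cL_jh_j=h_{j+1}$ via \eqref{Duality}. For any competing family $\tilde\mu_j=g_j\,dm_j$ satisfying \eqref{TildeInv}, one has $g_j=\cL_0^j g_0$ with $m_0(g_0)=1$. If $g_k\in BV$, apply (i) from time $k$ to conclude $\|g_{k+n}-h_{k+n}\|_{BV}=\|\cL_k^n g_k-h_{k+n}\|_{BV}\leq C\delta^n\|g_k\|_{BV}$. For the general $L^1$ case, approximate $g_0$ in $L^1$ by $BV$ functions using (V7), apply the $BV$ estimate to the approximants, and absorb the $L^1$ approximation error using that $\cL_j$ is an $L^1$-isometry on positive functions; this gives $\|g_n-h_n\|_1\to 0$.

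The main obstacle I expect will be ensuring uniformity in $j$ of every quantitative constant throughout the argument. Birkhoff's contraction is standard for a single operator, but the sequential version requires the Hilbert diameter $D$, the contraction ratio, and the Liverani comparison between the projective metric and $\|\cdot\|_{BV}$ all to hold with $j$-independent constants; ultimately this uniformity traces back to the $j$-uniform formulation of (V1)--(V7), (LY1)--(LY2), and (SC), so the bookkeeping is the delicate part rather than any single conceptual step.
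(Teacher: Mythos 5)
Your proposal follows essentially the same route as the paper: Birkhoff–Hilbert cone contraction on the cones $\cC_{j,a}$, using the iterated Lasota–Yorke bound for cone invariance, (SC) plus (V3) for a uniform diameter bound on the image, and a Liverani-type comparison to pass from the projective metric to $\|\cdot\|_{BV}$, then $h_j=\cL_0^j h_0$ and a decomposition of general $\phi$ via adding a large constant. Part (ii) is also the same: show $\cL_j g_j = g_{j+1}$, iterate, approximate $g_0$ in $L^1$ by a BV function and use that $\cL_j$ does not increase $L^1$-norms.

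One point in your sketch is slightly imprecise. The Hilbert metric relevant here is the projective metric attached to the cone $\cC_{j,a}$ (which carries a variation constraint), not the positivity cone, so the diameter of $\cL_j^{n(a)}\cC_{j,a}$ inside $\cC_{j+n(a),a}$ is \emph{not} simply $\log\bigl((1+C_{\mathrm{var}}a)/\alpha(a)\bigr)$ — that number only controls the $\sup/\inf$ ratio. To get a finite $\cC_{j+n(a),a}$-diameter one must combine the $\sup/\inf$ bound \emph{with} the fact that the image already lies in the strictly smaller cone $\cC_{j+n(a),\eta a}$ (which your (LY) iteration does give). The paper handles this by quoting the explicit estimate \cite[(1.1)]{Buzzi}; you should replace your formula for $D$ with an appeal to that estimate (or reproduce its short proof), feeding in both the $\eta a < a$ gap and the $\sup/\inf$ control. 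With that fix your argument matches the paper's. The uniformity concern you flag at the end is the right one, and the paper discharges it precisely because all constants in (LY1)--(LY2), (SC) and (V1)--(V7) are $j$-independent, as you anticipate.
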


\begin{remark}
In general,  the measures $\mu_j$ are not unique.
That is \eqref{TildeInv}
 does {\bf not} imply 
that $\tilde \mu_j=\mu_j$ for all $j$.
 In fact, for every BV density $g_0:X_0\to\bbR$
the measures $\tilde \mu_j=(T_0^j)_*(g_0 dm_0)$ also satisfy \eqref{TildeInv}
 but in general they differ from 
$\mu_j$ (even when $T_j$ does not depend on $j$). The point is that  the density of $\tilde \mu_j$ is $g_j\!\!=\!\!\cL_0^j  g_0$. This density
satisfies $\| g_n\!\!-\!\!h_n\|_{BV}\!\!=\!\!O(\del^n)$ 
by Theorem~\ref{RPF 0}(i). However
$g_n$ differs from $h_n$ in general since $h_n$ corresponds to some possibly other choice of $g_0$.
\end{remark}

\begin{remark}\label{Remark 1.2}
Let $g\in B_{j}, f\in B_{j+n}$. Then for all $j,n$ we have
$$
m_j(g\cdot (f\circ T_j^n))=m_{j+n}((\cL_j^n g )f).
$$
 Plugging \eqref{Exp Conv Main} into the RHS we get
$\DS
|m_j(g \cdot f\circ T_j^n)-m_j(g)\mu_{j+n}(f)|\leq C_1\|g\|_{BV}\|f\|_{1}\del^n
$
for some constant $C_1>0$. Hence $m_{j+n}(f\circ T_j^n)\approx\mu_{j+n}(f)$. 
 In particular if the operators $T_j$ and measures $m_j$ are defined for all $j\in\bbZ$ so that the assumpions
(LY1), (LY2) and (SC) are valid for all $j\in\bbZ$ then 
$\DS \mu_j(f)=\lim_{n\to\infty} m_{j-n} (f\circ T_{j-n}^n)$, and so in this case
$\mu_j$ is the unique equivariant family of measures such that $\DS \sup_j \left\|\frac{d\mu_j}{d m_j}\right\|< \infty.$
\end{remark}

\subsection{Changing the reference measures.}
It is important to note that once conditions (LY1), (LY2) and (SC)
hold for a given sequence of measures $m_j$, they must hold with a uniformly equivalent sequence of measures (and hence, all the limit theorems stated in the next section
 are valid for  initial measures having BV densities with respect to $m_0$). 
This is the content of the following result.
\begin{proposition}
 Let $\mu_j $ be a sequence of probability measures such that $\mu_j=h_jdm_j$, with  $h_j$ 
 uniformly bounded and bounded away from the origin and $\DS \sup_j v_j(h_j)<\infty$.

 Then (V1)-(V7) and (LY1), (LY2) and (SC) also hold  when we repalce $m_j$ by $\mu_j$.
\end{proposition}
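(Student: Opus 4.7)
The plan is to dispose of (V1)--(V7) quickly and then build a formula for the new transfer operator in order to reduce (LY1), (LY2), (SC) to the properties assumed for $\cL_j$. Properties (V1), (V2), (V4), (V5) do not involve the measure and are unchanged. Properties (V3) and (V6) are stated in terms of $L^1$-norms, and since the assumption gives $0 < c \le h_j \le C < \infty$ uniformly in $j$, the norms $\|\cdot\|_{1,m_j}$ and $\|\cdot\|_{1,\mu_j}$ are uniformly equivalent, so these hold with adjusted constants. Property (V7) concerns $C(X_j)$ and the class of bounded BV functions, neither of which depends on the reference measure, so it is preserved.

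Using \eqref{Duality} on both sides, a direct computation yields
\[
\tilde\cL_j g \;=\; h_{j+1}^{-1}\,\cL_j(g\,h_j), \qquad
\tilde\cL_j^n g \;=\; h_{j+n}^{-1}\,\cL_j^n(g\,h_j),
\]
where $\tilde\cL_j$ denotes the transfer operator of $T_j$ relative to $\mu_j, \mu_{j+1}$. By (V5), (V6) together with the uniform bounds $c \le h_j \le C$ and $V := \sup_j v_j(h_j) < \infty$, the multiplication maps $g\mapsto g h_j$ and $g\mapsto g/h_{j+1}$ are bounded on $(B_j, \|\cdot\|_{BV})$ with norms uniform in $j$. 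Combined with (LY1) for $\cL_j$, this immediately gives (LY1) for $\tilde\cL_j$.

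The main obstacle is (LY2). A one-step estimate with fixed $N$ would produce a coefficient of $v_j(g)$ of the form $\rho\,(\|h_j\|_\infty + C_{\mathrm{var}} V)$, which need not be less than $1$. To get around this I iterate: using \eqref{LY Iter} together with (V5) to expand $v_j(g h_j)$ and (V3) to absorb $\|g\|_\infty$ into variation plus $L^1$, one obtains, for $n \ge N$,
\[
v_{j+n}\bigl(\cL_j^n(g h_j)\bigr) \;\le\; \rho^{1+n-N}\bigl(C\|h_j\|_\infty + C C_{\mathrm{var}} V\bigr)\, v_j(g) \;+\; \textrm{const}\cdot \|g\|_{1,m_j},
\]
where the prefactor of $v_j(g)$ decays exponentially in $n$. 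Multiplication by $h_{j+n}^{-1}$ only introduces a further uniform multiplicative constant in the variation (again by (V5), (V6) and the uniform bounds), so choosing $n = \tilde N$ large enough yields a genuine contraction with uniform $\tilde\rho < 1$, and the equivalence of $L^1$-norms handles the remainder. This gives (LY2) for $\tilde\cL_j$.

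For (SC), I reduce to the original (SC) applied to $g h_j$. Given $g \ge 0$ with $v_j(g) \le a\,\mu_j(g)$, the inequality $\mu_j(g) \ge c\, m_j(g)$ combined with (V3) yields $\|g\|_\infty \le C'(a)\, \mu_j(g)$, and then (V5) produces $v_j(g h_j) \le b(a)\, m_j(g h_j)$ with $b(a)$ depending only on $a$ and the uniform bounds on $h_j$. The original (SC) then gives $\text{ess-inf } \cL_j^n(g h_j) \ge \alpha(b(a))\, m_j(g h_j) = \alpha(b(a))\, \mu_j(g)$ for $n \ge n(b(a))$, and dividing by $\sup_j \|h_j\|_\infty$ transfers the lower bound to $\tilde\cL_j^n g$, yielding (SC) for the new system with $\tilde n(a) := n(b(a))$ and $\tilde\alpha(a) := \alpha(b(a))/\sup_j \|h_j\|_\infty$.
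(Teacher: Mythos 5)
Your proof is correct and is built on the same key ingredient as the paper's: the explicit formula $\tilde\cL_j g = h_{j+1}^{-1}\cL_j(g h_j)$ for the transfer operator after the change of reference measures, followed by transferring each hypothesis using the uniform bounds on $h_j$, $v_j(h_j)$ and $v_j(1/h_j)$. The treatments of (V1)--(V7) and (LY1) match the paper's. Where you diverge is in the two remaining conditions. For (LY2), you flag (correctly) that a single application of the Lasota--Yorke inequality with the original $N$ need not contract once the extra factors from $h_j$ and $h_{j+N}^{-1}$ are included, and you remedy this by iterating \eqref{LY Iter} so that the coefficient of $v_j(g)$ decays like $\rho^n$ and can be driven below $1$; the paper simply asserts (LY2) holds ``with different constants'' without spelling this out, so your version is the more careful one. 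For (SC) you take a slightly different route: you show that the hypothesis $v_j(g)\le a\,\mu_j(g)$ forces $g h_j$ into an original cone $\cC_{j,b(a)}$ and apply the original covering condition to $g h_j$. The paper instead uses positivity of $\cL_j$ to get $L_j^n g \ge D c\,\cL_j^n g$ directly and applies the original (SC) to $g$ itself, noting that $g$ lies in a cone $\cC_{j,a'}$ for a comparable parameter because of the $L^1$-equivalence \eqref{Equiv}. Both reductions are short and valid; the paper's avoids invoking (V5) on $g h_j$, while yours avoids the separate positivity step. In short: same overall strategy, with your (LY2) argument filling in a gap the paper leaves implicit and your (SC) argument using a different but equivalent cone computation.
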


%For the applications in the next sections, the above 
This proposition will allow us to prove limit theorems with respect to the Lebesgue measures (i.e. the case $m_j=$Lebesgue), as well as with respect to the unique  absolutely continuous (sequentially) invariant measures $\mu_j$ or any other sequence of equivalent measures. This is one of the advantages of the setup presented in this section.
\begin{proof}
First, it is clear that there is a constant $C_1>0$ such that for every function $g:X_j\to\bbC$,
\begin{equation}\label{Equiv}
C_1\|g\|_{L^1(m_j)}\leq \|g\|_{L^1(\mu_j)}\leq \|g\|_{L^1(m_j)}.
\end{equation}
Consequently, the $BV$ norms induced from both measures are equivalent. 
Moreover, since the measures $m_j$ and $\mu_j$ are equivalent we have $\|\cdot\|_{L^\infty(m_j)}=\|\cdot\|_{L^\infty(\mu_j)}$. Therefore (V1)-(V7) remain true if we replace $m_j$ with $\mu_j$.

Next, the transfer operator $L_j$ corresponding %to $T_j$ 
to the measures $\mu_j$ and $\mu_{j+1}$ is given by 
$$
L_jg=\frac{\cL_j(gh_j)}{h_{j+1}}.
$$
By (V6),\; $\DS \sup_jv_j(1/h_j)<\infty$. So conditions (LY1) and  (LY2) also hold true with $L_j$ instead of $\cL_j$ (possibly with different constants).

 Finally, notice that the operator $\cL_j$ is positive. Therefore, if $c$ is a constant such that $h_j\geq c>0$ then for every function $h\geq 0$,
 $$
\cL_j^n(h h_j)\geq c\cL_{j}^n(h),\,\,\, m_{j+1}\text{ a.s.}
 $$
 Now, since $\|1/h_{j+1}\|_{L^\infty}\!\!\geq\!\! D$ for some constant $D<\infty$, we conclude that for every non-negative function $h$ and all $n$ we have 
 $\DS
L_j^nh\geq Dc\cL_j^n h,\text{ a.s.}
 $
 Consequently, the validity of condition (SC) for the operators $L_j$ follows from the corresponding validity for $\cL_j$, together with \eqref{Equiv}.
 \end{proof}

\section{Main results}\label{Sec2}
%\subsection{Limit theorems}

Let $f_j:X_j\!\to\!\bbR,\,j\!\geq\! 0$ be a sequence of measurable functions such that $\DS \sup_j\|f_j\|_{BV}\!\!<\!\!\infty$. Set 
$\DS
S_n(x) = \sum_{j=0}^{n-1}f_j (T_0^{j} x)
$
where
$\DS
T_k^j=T_{k+j-1}\circ\cdots\circ T_{k+1}\circ T_k.
$

We consider the sequence of functions $S_n:X_0\to\bbR$ as random variables on the probability space $(X_0,\cB_0, m_0)$. Our results will be limit theorems for such sequences.%, and we begin with the behaviour of the variance.

%{\color{red} \begin{theorem}\label{Var them}
% We can write\footnote{\color{red} Do we need this theorem here? A very similar result is discussed in
% \S \ref{SSMartCob}.}
%\begin{equation}\label{mdec}
%f_j=m_0(f_j\circ T_0^j)+M_j+u_{j+1}\circ T_j-u_j
%\end{equation}
%with $u_j,M_j\in B_j$   such that
%$\quad \sup_j\|u_j\|_{BV}<\infty,\quad 
%\sup_j\|M_j\|_{BV}<\infty.$
%Moreover, $M_j\circ T_0^j$ is a reverse martingale difference  with respect to the reverse filtration $(T_0^j)^{-1}\cB_0$ (on the probability space $(X_0,\cB_0,M_0)$). 
%\vskip0.2cm
%Consequently,
%the following conditions are equivalent.
%\vskip0.2cm
%(i) $\liminf\text{Var}_{m_0}(S_n)<\infty$.
%\vskip0.2cm
%(ii) $\sup\text{Var}_{m_0}(S_n)<\infty$

%(3) $\sup\|S_n-m_0(S_n)\|_{L^\infty(m_0)}<\infty$
%\vskip0.2cm
%(iii) \eqref{mdec} holds and 
%$\DS %\sum_{j}\text{Var}_{m_0}(M_j\circ T_0^j)<\infty.$ 
%(iv) $\liminf\text{Var}_{\mu_0}(S_n)<\infty$.
%\vskip0.2cm
%(v) $\sup\text{Var}_{\mu_0}(S_n)<\infty$
%(7) $\sup\|S_n-\mu_0(S_n)\|_{L^\infty(\mu_0)}<\infty$
%\end{theorem}
%An immediate corollary of \eqref{mdec} is the following exponential concentration inequality.
%}

Denote $\sig_n=\sqrt{\text{Var}_{m_0}(S_n)}$ and $\bar S_n=S_n-m_0(S_n)$.
Let $F_n(t)=\bbP_{m_0}(\bar S_n/\sig_n\leq t)$ be the distribution function of 
$\bar S_n/\sig_n$, and let $$\Phi(t)=\frac{1}{\sqrt {2\pi}}\int_{-\infty}^t e^{-x^2/2}dx
$$
be the standard normal distribution function.

Recall that the (self-normalized) central limit theorem (CLT) means that for every real $t$ 
%we have
$$
\lim_{n\to\infty}F_n(t)=\Phi(t).
$$
The CLT in our setup can be proven using  martingale coboundary decomposition 
of \S \ref{SSMartCob}
and applying an appropriate CLT for martingales (cf. \cite{GL, CR}).
We refer to \S \ref{SSMartCob} for a characterization when $\sig_n\to\infty$.
 In this paper we will not give a separate proof of the CLT since our main results
give not only the CLT but also rate of convergence, in various metrics.

\begin{theorem}\label{BE}
Suppose $\sig_n\to\infty$.

\vskip0.2cm
(i)  (A non-uniform Berry-Esseen theorem). $\forall s\geq 0$ there is a constant $C_s$ such that 
$$
\sup_{t\in\bbR}(1+|t|^s)\left|F_n(t)-\Phi(t)\right|\leq C_s\sig_n^{-1}.
$$ 
\vskip0.1cm
(ii) (A Berry-Esseen theorem in $L^p$). For all $p\geq1$ we have 
$\DS
\left\|F_n-\Phi\right\|_{L^p(dx)}=O(\sig_n^{-1}).
$
\vskip0.1cm
(iii)  For all $s\geq 1$  there is a constant $C_s$ such that
for every absolutely continuous function $h:\bbR\to\bbR$  such that 
$H_s(h):=\int \frac{|h'(x)|}{1+|x|^s}dx<\infty$ we have 
$\DS
\left|\bbE_{m_0}[h(\bar S_n/\sig_n)]-\int h d\Phi\right|\leq C_s H_s(h)\sig_n^{-1}.
$
%where $C_s$ is a constant that does not depend on $h$.

\end{theorem}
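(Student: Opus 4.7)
The plan is to establish part (i) first; parts (ii) and (iii) will then follow from (i) by integration and integration by parts, respectively.

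For part (i), as the introduction indicates, I would invoke the general Berry--Esseen-type smoothing result from \cite{H} which reduces the non-uniform rate to a \emph{logarithmic growth estimate} on the characteristic function $\phi_n(\xi):=\bbE_{m_0}\bigl[e^{i\xi\bar S_n/\sig_n}\bigr]$: on a window $|\xi|\le c\sig_n$ the branch $\log\phi_n(\xi)$ should be well defined and satisfy $\bigl|\log\phi_n(\xi)+\tfrac{\xi^2}{2}\bigr|\le C(1+|\xi|^3)/\sig_n$. This is Proposition~\ref{Growth Prop}. To establish it I would combine three ingredients. First, a martingale--coboundary decomposition $f_j=M_j+u_{j+1}\circ T_j-u_j$ (modulo centering), where the correctors $u_j$ are obtained by summing $\cL_j^k$-iterates and converge thanks to the exponential mixing rate furnished by the real RPF Theorem~\ref{RPF 0}. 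Second, Burkholder's inequality applied to the martingale part to yield uniform moment bounds $\bbE_{m_0}|\bar S_n|^p\le C_p\sig_n^p$ for each $p\ge 2$; this is Proposition~\ref{Mom prop}, and the martingale reformulation is crucial precisely because it sidesteps any a priori lower bound on $\sig_n$ in $n$. Third, an application of the sequential complex RPF Theorem~\ref{CMPLX RPF} to the twisted transfer operators $\cL_{j,z}g:=\cL_j(e^{zf_j}g)$ at $z=i\xi/\sig_n$; this factors $\phi_n(\xi)$, up to an exponentially small remainder, as a product of leading eigenvalues $\La_j(z)$ each with local expansion $\log\La_j(z)=za_j+\tfrac{z^2}{2}b_j+O(|z|^3)$, whose sum telescopes into $-\xi^2/2$ (up to cubic error and centering) because $\sum_j b_j$ equals $\sig_n^2$ up to a bounded correction coming from the coboundary terms.

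Part (ii) follows directly from (i) by choosing any $s$ with $sp>1$ and integrating the pointwise estimate:
\begin{equation*}
\|F_n-\Phi\|_{L^p(dx)}^p \le C_s^p\,\sig_n^{-p}\int_{\bbR}\frac{dx}{(1+|x|^s)^p}=O(\sig_n^{-p}).
\end{equation*}
Part (iii) follows from (i) by integration by parts: since (i) with $s$ large forces $F_n-\Phi$ to decay at $\pm\infty$, the boundary terms vanish and
\begin{equation*}
\bbE_{m_0}\bigl[h(\bar S_n/\sig_n)\bigr]-\int h\,d\Phi=-\int h'(x)\bigl(F_n(x)-\Phi(x)\bigr)\,dx,
\end{equation*}
whose absolute value is bounded by $C_s H_s(h)\sig_n^{-1}$ on applying (i) with the same $s$.

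The principal obstacle is the cubic error control in the third ingredient of (i). On the window $|\xi|\le c\sig_n$ the twist $z=i\xi/\sig_n$ is small, but the number of blocks $n$ is not assumed comparable to $\sig_n^2$, so a naive sum of cubic remainders $n|z|^3=n|\xi|^3/\sig_n^3$ need not be $O(|\xi|^3/\sig_n)$. The resolution is to combine the complex RPF expansions of Theorem~\ref{CMPLX RPF} with the martingale moment bounds of Proposition~\ref{Mom prop} so that the cubic contribution is weighted by the $b_j$ --- whose total $\sum_j b_j$ is of order $\sig_n^2$ rather than $n$ --- yielding exactly the required $O(|\xi|^3/\sig_n)$ error. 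Ruling out zeros of $\phi_n$ on the window, so that $\log\phi_n$ is well defined to begin with, is another delicate point that is handled by the same complex RPF framework.
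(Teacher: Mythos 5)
Your overall strategy matches the paper's: reduce all three parts to a logarithmic-growth estimate on the characteristic function via the general machinery of \cite{H}, then establish that estimate using the martingale--coboundary decomposition (Lemma~\ref{Mart lemm}), the uniform moment bounds (Proposition~\ref{Mom prop}), and the sequential complex RPF theorem (Theorem~\ref{CMPLX RPF}). Your derivation of (ii) and (iii) from (i) is also sound, and in fact the paper uses exactly that integration-by-parts argument in \S\ref{2side} when transferring results from one-sided to two-sided shifts; the formal proof in Section~\ref{BE sec} simply cites Theorems 5, 9 and Corollary 11 of \cite{H} to get all three parts at once.

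One point to sharpen: you state the target of Proposition~\ref{Growth Prop} as the single cubic bound $\bigl|\log\phi_n(\xi)+\tfrac{\xi^2}{2}\bigr|\le C(1+|\xi|^3)/\sig_n$ on $|\xi|\le c\sig_n$. That alone is what a classical Esseen smoothing argument would need for the \emph{uniform} rate, but it does not suffice for the non-uniform estimate with an arbitrary weight $(1+|t|^s)$. What the paper actually proves, and what \cite{H} requires, is control of \emph{all} higher derivatives: for every $k\ge 3$ there are $\del_k,C_k>0$ with $\sup_{|t|\le\del_k\sig_n}|\Lambda_n^{(k)}(t)|\le C_k\sig_n^{-(k-2)}$. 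Your cubic bound corresponds only to the case $k=3$ together with the normalizations $\Lambda_n(0)=0$, $\Lambda_n'(0)=0$, $\Lambda_n''(0)=-1$. The block decomposition of $\{0,\dots,n-1\}$ into intervals $I_j$ with $\Var(S_{I_j})\in[B,2B]$, combined with Lemma~\ref{L35} and the fact that the number of blocks is $\asymp\sig_n^2$, is precisely what lets one bound the $k$-th derivative of the cumulant sum by a constant times $\sig_n^2$ for every $k$, which after rescaling by $\sig_n^{-k}$ gives $\sig_n^{-(k-2)}$. Your phrase about ``the cubic contribution weighted by the $b_j$'s'' gestures at this, but the full argument needs derivatives of every order, not just the third.
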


Next, recall that the $p$-th Wasserstien distance between two probability measures $\mu,\nu$ on $\bbR$ with finite absolute moments of order $p$ is given by 
$$
W_p(\mu,\nu)=\inf_{(X,Y)\in\mathcal C(\mu,\nu)}\|X-Y\|_{L^p}
$$
where $\mathcal C(\mu,\nu)$ is the class of all  pairs of random variables $(X,Y)$ on $\bbR^2$ such that $X$ is distributed according to $\mu$, and $Y$ is distributed according to $\nu$. 
 Combining our estimates with
the main results in \cite{H} also yields the following result.

\begin{theorem}\label{ThWass}[A Berry-Esseen theorem in $W_p$]
For every $p$ we have 
$$
W_p(dF_n, d\Phi)=O(\sig_n^{-1})
$$
where $dG$ is the measure induced by a distribution function $G$.
\end{theorem}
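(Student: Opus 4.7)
The plan is to combine the estimates already established in the proof of Theorem \ref{BE} with the general Wasserstein results of \cite{H}. These results convert two inputs into $W_p$ rate estimates: (a) the ``logarithmic growth'' condition on the characteristic functions of $\bar S_n/\sig_n$ near the origin, verified in Proposition \ref{Growth Prop}, and (b) uniform $L^q$-moment bounds for $\bar S_n/\sig_n$ valid for every $q\geq 1$, supplied by Proposition \ref{Mom prop}. Thus the dynamical input required is exactly what has already been set up for Theorem \ref{BE}, and the conclusion follows by invoking the appropriate theorem of \cite{H}.

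To see why such an implication is plausible, observe that for $p = 1$ the conclusion follows immediately from Theorem \ref{BE}(iii) and Kantorovich-Rubinstein duality: any $1$-Lipschitz $h$ satisfies $H_2(h) \leq \int_\bbR (1+x^2)^{-1}dx < \infty$, so Theorem \ref{BE}(iii) with $s = 2$ yields $|\bbE_{m_0} h(\bar S_n/\sig_n) - \int h\, d\Phi| = O(\sig_n^{-1})$ uniformly over $1$-Lipschitz $h$, which is exactly $W_1(dF_n, d\Phi) = O(\sig_n^{-1})$.

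For $p > 1$ one would instead work with the quantile representation
$$W_p(dF_n, d\Phi)^p = \int_0^1 |F_n^{-1}(u) - \Phi^{-1}(u)|^p\,du,$$
splitting the integral at a slowly-growing bulk window $[-A_n, A_n]$. Inside the bulk, CDF differences are converted to quantile differences by dividing by the Gaussian density, and one chooses $s$ in Theorem \ref{BE}(i) large enough (depending on $p$) so that the polynomial rate $(1+|t|^s)^{-1}$ absorbs the factor $\phi(x)^{1-p}$; outside the bulk, the uniform moment bounds from Proposition \ref{Mom prop} together with Gaussian tail estimates show that the contribution is negligible compared to $\sig_n^{-1}$.

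The main obstacle lies in this $p > 1$ case: the Gaussian density is exponentially small in the tails while the pointwise CDF-difference bound decays only polynomially, so the three parameters $A_n$, $s$, and $q$ must be balanced carefully. Packaging this balance in a usable form is exactly what \cite{H} does, which is why the cleanest presentation is to defer to that reference once inputs (a) and (b) are verified here.
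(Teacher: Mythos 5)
Your proposal matches the paper's proof: the paper also simply invokes the results of \cite{H} (Theorems 5 and 9 and Corollary 11, applied to $W_n=S_n/\sig_n$), reducing Theorems \ref{BE} and \ref{ThWass} to the logarithmic growth condition verified in Proposition \ref{Growth Prop}. Your "plausibility" sketch for $p=1$ and $p>1$ is supplementary commentary not present in the paper, but the core argument is identical.
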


%This theorem extends...

A key ingredient in the  proof of Theorems \ref{BE} and \ref{ThWass} is the following proposition, which we believe has its own interest.
\begin{proposition}\label{Mom prop}
For every $p\!\!\geq\!2$ there is a constant  $C$ such that for all $j\!\!\geq \!0$ and 
$n\!\in\!\bbN$,
$$
\|\bar S_{j,n} f\|_{L^p(m_0)}\leq C(1+\|\bar S_{j,n}\|_{L^2}).
$$
Moreover, $C$ depends only on $p$,\; $\DS\sup_j\|f_j\|_{BV}$, and the constants from assumptions (V1)--(V7),
(LY1), (LY2) and (SC).
\end{proposition}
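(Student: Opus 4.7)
The plan is to reduce $\bar S_{j,n}$ to a reverse-martingale sum via a Gordin-type decomposition, apply Burkholder's inequality, and bootstrap on $p$. First, using the proposition on changing reference measures, I reduce to $m_0=\mu_0$, so $\bar S_{j,n} = \sum_{k=j}^{j+n-1} \tilde f_k \circ T_0^k$ with $\tilde f_k := f_k - \mu_k(f_k)$. Setting $g_0=0$ and defining on $X_k$
$$
g_k\, h_k := -\sum_{\ell=0}^{k-1} \cL_\ell^{k-\ell}(\tilde f_\ell\, h_\ell),
$$
the mean-zero property $m_\ell(\tilde f_\ell h_\ell)=0$ combined with Theorem~\ref{RPF 0} gives $\|\cL_\ell^{k-\ell}(\tilde f_\ell h_\ell)\|_{BV}\leq C\del^{k-\ell}$, and property (V6) then implies $\sup_k\|g_k\|_{BV}<\infty$. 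A direct computation verifies that $M_k:=\tilde f_k - g_k + g_{k+1}\circ T_k$ satisfies $\cL_k(M_k h_k)=0$, so that $M_k\circ T_0^k$ is a reverse-martingale difference with respect to the decreasing filtration $\cF_k := (T_0^k)^{-1}(\cB_k)$. Telescoping gives $\bar S_{j,n} = \tilde M_{j,n} + R_{j,n}$ with $\tilde M_{j,n}:=\sum_{k=j}^{j+n-1} M_k\circ T_0^k$ and $\|R_{j,n}\|_\infty\leq 2\sup_k\|g_k\|_\infty$.

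Applying Burkholder's inequality to $\tilde M_{j,n}$ gives $\|\tilde M_{j,n}\|_{L^p}^2\leq C_p\|V_{j,n}\|_{L^{p/2}}$, where $V_{j,n}:=\sum_k (M_k\circ T_0^k)^2$. Orthogonality yields $\bbE_{m_0}[V_{j,n}]=\sum_k\mu_k(M_k^2)=\|\tilde M_{j,n}\|_{L^2}^2$, which differs from $\|\bar S_{j,n}\|_{L^2}^2$ by a bounded additive term. Writing $V_{j,n}=\bbE_{m_0}[V_{j,n}]+\sum_k Y_k\circ T_0^k$ with $Y_k:=M_k^2-\mu_k(M_k^2)$, the fluctuation is itself a partial sum of the same shape as $\bar S_{j,n}$ but with uniformly $BV$-bounded, mean-zero observables $(Y_k)$. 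This sets up an induction on $p$ along the dyadic sequence $2,4,8,\ldots$ (general $p$ by interpolation): the base case $p=2$ is trivial, and in the inductive step the hypothesis at exponent $p/2$, applied to the fluctuation sum, yields
$$
\|V_{j,n}-\bbE_{m_0}[V_{j,n}]\|_{L^{p/2}}\leq C\bigl(1+\|V_{j,n}-\bbE_{m_0}[V_{j,n}]\|_{L^2}\bigr).
$$
Combining this with the key estimate $\|V_{j,n}-\bbE_{m_0}[V_{j,n}]\|_{L^2}^2\leq C(1+\|\bar S_{j,n}\|_{L^2}^2)$ gives $\|V_{j,n}\|_{L^{p/2}}\leq C(1+\|\bar S_{j,n}\|_{L^2}^2)$, whence $\|\tilde M_{j,n}\|_{L^p}\leq C(1+\|\bar S_{j,n}\|_{L^2})$, and adding the bounded coboundary correction yields the Proposition.

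The main obstacle is the key $L^2$ estimate for the fluctuation $V_{j,n}-\bbE_{m_0}[V_{j,n}]$. A naive decay-of-correlations bound of the form $|\bbE_{m_0}[(Y_j\circ T_0^j)(Y_k\circ T_0^k)]|\leq C\del^{|j-k|}\|Y_j\|_{BV}\|Y_k\|_{L^2}$, obtained by applying Theorem~\ref{RPF 0}(i) to $\cL_j^{k-j}(Y_j h_j)$, sums to an unwanted factor $\sqrt n\,\|\bar S_{j,n}\|_{L^2}$. Following the ideas in \cite{CR}, this is overcome by a secondary martingale-coboundary decomposition of $M_k^2$ that exploits the identity $\cL_k(M_k h_k)=0$, producing a symmetric $L^2$-$L^2$ decay for the off-diagonal covariances whose summed contribution aggregates correctly to $\|\bar S_{j,n}\|_{L^2}^2$. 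The resulting bookkeeping, while delicate, propagates through the induction with constants depending only on $p$, $\sup_j\|f_j\|_{BV}$, and the structural constants of the setup.
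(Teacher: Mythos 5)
Your high-level plan — reduce to a reverse-martingale sum via a Gordin-type coboundary decomposition, apply Burkholder's inequality, and bootstrap along dyadic exponents with an $L^2$ estimate for the centered quadratic variation as the engine — is exactly the structure of the paper's proof (Lemma~\ref{Mart lemm}, then Burkholder, with Proposition~\ref{Thm 4.1'} as the key input). The reduction steps, the induction on $p=2^m$, and the recognition that the whole problem hinges on controlling $\|V_{j,n}-\bbE_{m_0}[V_{j,n}]\|_{L^2}$ by $C(1+\|\bar S_{j,n}\|_{L^2}^2)$ are all correct and match the paper. However, you stop short precisely at that crux: you declare it the ``main obstacle,'' dismiss the direct approach, and then gesture at a ``secondary martingale-coboundary decomposition of $M_k^2$'' and ``delicate bookkeeping'' without writing it down. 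As submitted, the proposal does not prove the proposition.

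Moreover, your diagnosis of why the direct approach fails is itself wrong, which is the more substantive issue. You attack the covariance of $Y_k=M_k^2-\text{mean}$ with the \emph{BV--$L^2$} bound $|\text{Cov}(Y_k\circ T_0^k,\,Y_\ell\circ T_0^\ell)|\le C\delta^{|\ell-k|}\|Y_k\|_{BV}\|Y_\ell\|_{L^2}$, and correctly observe that summing this only gives $O(\sqrt n\,\|\bar S_{j,n}\|_{L^2})$. But the right ``naive'' estimate is the \emph{BV--$L^1$} one: since $\tilde m_k(Y_k)=0$ and $\sup_k\|Y_k\|_{BV}<\infty$, Proposition~\ref{Tilde prop} gives $\|\tilde\cL_k^{\ell-k}Y_k\|_\infty\le C\delta_0^{\ell-k}$ for $\ell\ge k$, hence
$$\big|\text{Cov}\big(Y_k\circ T_0^k,\,Y_\ell\circ T_0^\ell\big)\big|=\big|\tilde m_\ell\big((\tilde\cL_k^{\ell-k}Y_k)\,Y_\ell\big)\big|\le C\,\delta_0^{\ell-k}\,\tilde m_\ell(|Y_\ell|).$$
Summing first in $k$ and then using $\tilde m_\ell(|Y_\ell|)\le 2\,\tilde m_\ell(M_\ell^2)$ together with $\sum_\ell\tilde m_\ell(M_\ell^2)=\text{Var}_{m_0}(S_n M)\le C(1+\text{Var}_{m_0}(S_n f))$ already gives the needed bound, with no secondary decomposition and no factor of $\sqrt n$. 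The point you are missing is to keep the far factor in $L^1$ (where it is itself a variance contribution), not in $L^2$. Once you replace your estimate by this one, the rest of your argument goes through and coincides with the paper's; as written it has a genuine gap at the decisive step, and the alternative route you sketch is neither executed nor clearly an improvement (the orthogonality $\sum_k\mu_k(N_k^2)$ for the martingale part of $Y_k$ produces individually $O(1)$ terms and hence $O(n)$ unless you use the same $L^1$ trick anyway).
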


Finally, we also prove an almost sure invariance principle, with rates.

\begin{theorem}\label{ASIP} 
Denote $V_n=\text{Var}(S_n)=\sig_n^2$.
For every $\ve>0$ there is a coupling of the sequence of random variables $f_j\circ T_0^j$ (on $(X_0,\cB_0,m_0)$) with a sequence of independent centered normal random variables $Z_j$ such that:

\vskip0.2cm
(i) $\DS \left|S_n-\mu_0(S_n)-\sum_{j=1}^{n}Z_j\right|=o(V_n^{1/4+\ve})$ a.s.;

\vskip0.2cm
(ii) $\DS \left\|S_n-\mu_0(S_n)-\sum_{j=1}^{n}Z_j\right\|_{L^2}=O(V_n^{1/4+\ve})
$;
\vskip0.2cm

(iii) Var$\DS\left(\sum_{j=1}^n Z_j\right)-V_n=
    O\left(V_n^{1/2+\ve}\right).
    $
    \end{theorem}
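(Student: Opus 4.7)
The plan is to adapt the block-decomposition ASIP strategy of \cite{DDH,Haf SPL} to the present sequential setup, using the sequential real and complex RPF theorems together with the characteristic-function estimates of Section~\ref{Sec Log}. First, I would partition $\{0,\dots,n-1\}$ into consecutive ``big'' blocks $B_k=[N_k,N_{k+1}-\ell_k')$ separated by short ``gap'' blocks of length $\ell_k'\asymp \log N_{k+1}$. The cut points $N_k$ are chosen so that the block variances $\tau_k^2:=\text{Var}_{m_0}\!\big(\sum_{j\in B_k} f_j\circ T_0^j\big)$ grow polynomially in $k$ at a rate calibrated against the target exponent $\ve$. Because of the exponential decay in Theorem~\ref{RPF 0}, the gap blocks both asymptotically decouple successive big blocks and contribute cumulative variance that is negligible compared with $V_n^{1/2+\ve}$.

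Second, on an enlarged probability space I would construct, block-by-block, independent centered Gaussians $Z_k\sim N(0,\tau_k^2)$ coupled with the block sums $Y_k$ via a Strassen-Dudley quantile-type coupling. The efficiency of this coupling is driven by a conditional non-uniform Berry-Esseen estimate for $Y_k$ given the $\sigma$-algebra $T_0^{-N_k}\cB_{N_k}$: this is obtained by applying Theorem~\ref{BE}(i) to the shifted sequential system starting at time $N_k$, with reference measure taken to be the (uniformly BV-bounded) conditional density of $(T_0^{N_k})_*m_0$. Proposition~\ref{Mom prop} supplies the requisite moment control, and the complex RPF Theorem~\ref{CMPLX RPF} provides uniform characteristic-function asymptotics independent of the shift $N_k$. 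Assembling the per-block couplings and summing the errors yields the $L^2$ bound (ii). The almost-sure bound (i) then follows by a standard maximal-inequality plus Borel--Cantelli argument applied to the partial sums of coupling errors, and the variance-matching claim (iii) is obtained by a final small rescaling of the $Z_k$ that absorbs the residual cross-block covariances surviving the gap decoupling.

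The principal obstacle will be the conditional block-level Berry-Esseen estimate in the sequential setting with constants uniform in $k$. Pushing the unconditional Theorem~\ref{BE} to its conditional analogue requires that, after conditioning on the past up to time $N_k$, the shifted one-sided sequential system starting at $N_k$ still satisfies (LY1), (LY2), (SC) with the same parameters and with a reference density of uniformly bounded BV norm. This is exactly where the uniformity built into our hypotheses and the change-of-reference-measure proposition of Section~\ref{Sec Prim} are essential: they ensure that the block BE constants do not deteriorate with $k$, so that summation of the per-block coupling errors produces the advertised rate $V_n^{1/4+\ve}$ without extra logarithmic losses. A secondary subtlety, handled as in \cite{Haf SPL}, is to verify that the non-uniform tail in Theorem~\ref{BE}(i) is strong enough to make the Strassen-Dudley inversion integrable, which is why having arbitrary polynomial decay rate $s$ in \eqref{Non U BE} is crucial.
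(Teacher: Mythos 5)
The paper's route is genuinely different from yours, and your central step has a concrete gap. The paper partitions $\{0,\dots,n-1\}$ into blocks $I_k$ of \emph{bounded} variance $[B,2B]$ (the same blocks already constructed in the proof of Proposition~\ref{Growth Prop}), sets $A_k=S_{I_k}$, and invokes a Gou\"ezel-type ASIP criterion \cite[Theorem~2.1]{DDH} as a black box. What must be verified there is not a conditional Berry--Esseen for each block but an \emph{approximate factorization of joint characteristic functions}: this is proved by writing the joint characteristic function as a product of the twisted operators $\tilde\cL_{b,it}^m$ applied to $\textbf{1}$, inserting the rank-one projections $Q_j^k$, bounding $\|\tilde\cL_j^k-Q_j^k\|\leq C\del^k$ via Theorem~\ref{RPF 0}, and bounding $\sup_{|t|\leq\ve_0}\|\tilde\cL_{j,it}^n\|$ via Theorem~\ref{CMPLX RPF} together with a Taylor expansion of $\Pi_{j,n}(it)$ (using \cite[Lemma~43]{DH} and Proposition~\ref{Mom prop}). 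Linear growth of $\text{Var}(A_1+\cdots+A_n)$ and exponential decay of $\text{Cov}(A_j,A_{j+k})$ are checked separately, the remainder $\cD_j=\max_{k\in I_j}|S_{b_j}-S_k|$ is controlled by Doob's inequality on the martingale part plus the coboundary $L^\infty$ bound, and a Berkes--Philipp gluing step (Lemma~\ref{BP L}) finishes. No gap blocks, no per-block quantile coupling, no conditional distributions.

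The gap in your plan is at the conditional Berry--Esseen step. You propose to prove a BE for the block sum $Y_k=\sum_{j\in B_k}f_j\circ T_0^j$ conditionally on $T_0^{-N_k}\cB_{N_k}$, with ``the conditional density of $(T_0^{N_k})_*m_0$'' as reference measure. But $Y_k$ is a function of $T_0^{N_k}x$ alone, hence $T_0^{-N_k}\cB_{N_k}$-measurable; conditioning $Y_k$ on that $\sigma$-algebra makes it deterministic and there is nothing to couple. If instead you mean the \emph{unconditional} law $\tilde m_{N_k}=(T_0^{N_k})_*m_0$ as reference (which does have uniformly BV density, by Theorem~\ref{RPF 0} and (V6)), then Theorem~\ref{BE} gives a BE for $Y_k$ marginally, but a marginal Strassen--Dudley coupling per block does not by itself produce a \emph{joint} coupling of $(Y_1,\dots,Y_{k_n})$ with independent Gaussians; one still has to quantify the dependence between blocks, and your logarithmic gaps do not eliminate it for a one-sided non-invertible map. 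That cross-block dependence control is exactly what the DDH characteristic-function condition supplies. Your variance-matching fix for (iii) has the same problem: it presupposes that the cross-block covariances ``surviving the gaps'' have already been tamed. Finally, with blocks of polynomially growing variance (as opposed to the paper's bounded blocks) the arithmetic of per-block errors vs.\ $V_n^{1/4+\ve}$ also has to be redone; in the paper the choice $\text{Var}(S_{I_k})\in[B,2B]$ together with $k_n\asymp\sig_n^2$ makes this bookkeeping trivial once the block ASIP rate $o(k^{1/4+\ve})$ is imported.
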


\section{Examples}\label{Eg Sec}

 Here we exhibit several classes of systems fitting in the abstract setup of Section \ref{Sec Prim}.

\subsection{Piecewise expanding maps on the interval}
We take $X_j=I=[0,1]$ and $m_j=$Lebesgue for all $j$. Let $v_j=v$ be the usual variation of functions on $[0,1]$: 
$$
v(g)=\sup_{n}\sup_{t_0=0<t_1<...<t_{n}<t_{n+1}=1}\sum_{j=0}^n|g(t_{n+1})-g(t_n))|.
$$

We suppose that for each $j$ we can write $\DS
[0,1]=\bigcup_{k=1}^{d_j}I_{j,k}
$
where   $I_{j,k}, 1\leq k\leq d_j$ are intervals with disjoint interiors such that $\DS \sup_j d_j<\infty$.

We also suppose that for all $j$ and $k$ the restriction $T_{j,k}:=T_j|I_{j,k}$ is a $C^2$
expanding map so that 
$\DS
\sup_{j}\max_{1\leq k\leq d_j}\|T''_{j,k}\|_\infty<\infty\text{ and }\,\del:=\inf_{j}\min_{1\leq k\leq d_j}\inf|T_{j,k}'|>1.
$
Moreover, we assume that 
$$
\inf_{j}\min_{1\leq k\leq d_j}|I_{j,k}|>0.
$$
Let $\cL_j$ be the transfer operator of $T_j$, namely the operator given by
$$
\cL_j g(x)=\sum_{k:\, x\in T_{j,k}(I_{j,k})}\frac{g(T_{j,k}^{-1}x)}{T_j'(T_{j,k}^{-1}x)}.
$$
Then (LY1) holds.
 A standard argument (see \cite{LY}) yields that if $N$ satisfies $\del^N>2$ then there are constants $K\geq1$ and $\rho\in(0,1)$ such that for all $j$ we have
$\DS
\text{var}(\cL_j g)\leq \rho\;
\text{var}(g)+\|g\|_{L^1}.
$
Thus, (LY2) holds. 

Next, in order to verify (SC) we can assume that for every interval $J\subset [0,1]$ there is $n(J)\in\bbN$ such that for every $j$ we have 
\begin{equation}\label{Covv}
T_j^{n(J)} J=[0,1].
\end{equation}
Under the above condition the verification  (SC) is carried out similarly to \cite[\S 1.2]{Buzzi}. 

 For piecewise expanding maps satisfying the assumptions described above
we get all the limit theorems described in the previous section for sums of the form 
$\DS
S_n=\sum_{j=0}^{n-1}f_j\circ T_0^j
$
considered as random variables with respect to a measures which is absolutely continuous with respect to Lebesgue and its density is a BV function. Here, $f_j$ must satisfy $\DS \sup_j\|f_j\|_{BV}<\infty$.

\subsection{High dimensional piecewise expanding maps.}\label{high dim}

Let $X_j=X$ coincide with a single compact subset of $\bbR^k$ for some $k>1$. Let $m$ be the normalized Lebesgue measure on $X$ and let $v$ be the variation defined in Example \ref{Eg}(iv). 

We suppose that the maps $T_j$ have the following properties. There are constants 
$d\in\bbN$, $\gamma,C,\ve>0$ and $s\in(0,1)$ with the following properties.
For each $j$ there are disjoint sets $A_{j,i}, \tilde{A}_{j,i}, 1\leq i\leq d_j\leq d$
and maps $T_{j,i}:\tilde A_{j,i}\to X$ such that:

\vskip0.2cm
(i) The sets $A_{j,1},...,A_{j,d_j}$ are disjoint and 
$\DS m(X\setminus\bigcup_{i=1}^{d_j}A_{j,i})=0$. 
Moreover, $A_{j,i}\subset\tilde A_{j,i}$;
\vskip0.2cm

(ii) Each $T_{j,i}$ is a $C^{1+\gamma}$ function;
\vskip0.2cm

(iii) $T_j|_{A_{j,i}}=T_{j,i}$  and 
for all $j$ and $i$ and $B_\ve(T_{j,i}A_{j,i})\subset T_{j,i}(\tilde A_{j,i})$, where $B_\ve (A)$ is the $\ve$-neighborhood of a set $A$;
\vskip0.2cm
(iv) For all $j$ and $i$ the function $J_{j,i}=\text{Det}(D T_{j,i}^{-1})$ satisfies that for all $x,y\in T_{j,i}(A_{j,i})$,
$$
\left|\frac{J_{j,i}(y)}{J_{j,i}(x)}-1\right|\leq C\text{dist}(x,y)^{\gamma};
$$
\vskip0.2cm

(v) For every $x,y\in T_{j,i}(\tilde A_{j,i})$ with $\text{dist}(x,y)\leq \ve$ we have 
$\DS
\text{dist}(T_{j,i}^{-1}x,T_{j,i}^{-1}y)\leq s\cdot \text{dist}(x,y);
$
\vskip0.2cm
(vi) Each $\partial A_{j,i}$ is co-dimension one embedded compact $C^1$-submanifold and 
$$
s^\gamma+\frac{4s}{1-s}Z\frac{\Gamma_k}{\Gamma_{k-1}}<1
$$
where $\Gamma_k$ is the  volume of the unit ball in $\bbR^k$ and 
$\DS Z=\sup_j\sup_x\sum_{i}\bbI(x\in \tilde A_{i,j})$. 
\vskip0.2cm

\noindent
Under the above assumptions (LY1) and (LY2) with $N\!\!=\!\!1$ are satisfied (see \cite[Lemma 4.1]{Sau}).

Next, we also assume that for any open set $U$ there exists $n(U)\in\bbN$ such that for all $j$ 
\begin{equation}
\label{UCover}
T_j^{n(U)}U=X.
\end{equation}
Under the above condition the verification (SC) is carried out similarly to \cite[Lemma 3]{DFGTV1}.

One example where \eqref{UCover} holds are {\em Markov} maps. That is, we assume that 
for each $i, j$ the image $T_j A_{ij}$ is a union of some of the sets $A_{j+1, k}$. Moreover suppose that the system is
uniformly mixing in the sense that
$\exists \ell$ such that  $T_j^\ell A_{ij}=X_{j+\ell}=X$ for each $i, j$. This condition can be verified as follows.
Consider the adjacency matrix $\cA_{j}$ such that $\cA_j(i, k)=1$ if $T_j A_{j, i}\supset A_{j+1, k}$.
Then the uniform mixing assumption means that for each $j$ all entries of
$\cA_{j+\ell-1} \cdots\cA_{j+1} \cA_{j}$ are positive. Now let $U$ be an open set in $X_j=X.$ Without loss of generality we
may assume that $U$ is an open ball with center $x$ and radius $r.$
Given $k$ let $\cB_{j,k}(x)$ denote the set of points $y\in X$ such that 
$T_j^m x$ and $T_j^m y$ belong to the same elements of our partition $(A_{j+m,q})_q$ for all $m<k.$
By our assumptions $\text{diam}(\cB_{j,k}(x))\leq C_0 s^k$ for some constant $C_0>0$, and so for sufficiently 
large $\bar k$ we have
$\cB_{j,\bar k}(x)\subset U.$ Accordingly, $T_j^{\bar k} U$ contains one of elements of our Markov partition and then 
$T_j^{\bar k+\ell} U=X.$

\subsection{Covering maps and sequential SFT}\label{Eg Gibbs}
 Suppose that each $X_j$ is a metric space. Let $\mathsf{d}_j$ be the metric on $X_j$ and suppose that $\text{diam}(X_j)\leq 1$. 
Let $v_j=v_{j,\al}$ be the H\"older constant corresponding to some fixed exponent $\al\in(0,1]$.

 \begin{assumption}\label{AssPairing} {\bf (Pairing).}
There are constants $\xi\leq 1$ and $\gamma>1$ such that for every two points $x,x'\in X_{j+1}$ with $\mathsf{d}_{j+1}(x,x')\leq \xi$ we can write 
 $$
T_j^{-1}\{x\}=\{y_i(x): i\leq k\}, \quad T_j^{-1}(x')=\{y_i(x'): i\leq k\}
 $$
where
$\DS 
\mathsf{d}_j(y_i(x),y_i(x'))\leq \gamma^{-1}\mathsf{d}_{j+1}(x,x')
$
for all $i$.

\noindent
 Moreover $ \DS \sup_j \deg (T_j)<\infty$, where   $\deg(T)$ is the largest number of preimgaes that a point $x$ can have under the map $T$.
\end{assumption}
Denote by $B_j(x,r)$ the open ball of radius $r$  around a point $x\in X_j$.

 \begin{assumption}\label{Ass n 0}
 {\bf (Covering).}
 There exists $n_0\in\bbN$ such that
for every $j$ and $x\in X_j$ we have 
\begin{equation}\label{CoverLLT}
T_j^{n_0}\left(B_j(x,\xi)\right)=X_{j+n_0}.%repalce with some other notion of covering perhaps, then the conditions can be weaken.
\end{equation}

\end{assumption}
 Fix some $\al\in(0,1]$ and a  sequence of functions $\phi_j:X_j\to\bbR$ such that 
$\DS \sup_j\|\phi_j\|_\al<\infty$. Here $\|\phi_j\|_\al=\sup|\phi_j|+v_j(\phi_j)$  and $v_j(\phi_j)$ is the 
H\"older constant of $\phi_j$ corresponding to the exponent $\al$. 
Let $L_j$ be the operator which maps a function $g:X_j\to\bbR$ to a function $L_j g:X_{j+1}\to\bbR$ given by $L_j g(x)=\sum_{T_j y=x}e^{\phi_j(y)}g(y)$. Then (see \S \ref{Gibbs}) there is a sequence of probability measures $\nu_j$ on $X_j$ such that $(\cL_j)^*\nu_{j+1}=\la_j\nu_j$, where $\la_j>0$ is bounded and bounded away from $0$. Then we can take any measure $m_j$ of the form $m_j=u_jd\nu_j$ with $\sup_j\|u_j\|_\al<\infty$. This includes the unique sequence of measures $\mu_j$ which are absolutely continuous with respect to $\nu_j$ and $(T_j)_*\mu_j=\mu_{j+1}$ (see \S \ref{Gibbs}). This setup includes the following more concrete examples.

\subsubsection{Smooth expanding maps}
Let $M$ be $C^2$ compact connected  Riemannian manifold, and let $X_j=M$ for all $j$. 
Let $T_j:M\to M$ be $C^2$ endomorphisms of $M$ such that 
$$
\sup_j\|DT_j\|<\infty\text{ and }\sup_j\|(DT_j)^{-1}\|<1.
$$
Then the arguments in \cite[Section 4]{Kifer Thermo} (see \cite[(4.6)]{Kifer Thermo}) yield that Assumption \ref{AssPairing} is in force with some $\xi>0$. Next, arguing like at the paragraph below \cite[(4.19)]{Kifer Thermo}) we see that Assumption~\ref{Ass n 0} holds if $n_0$ is large enough. 
Take $\phi_j=-\ln J_{T_j}$. Then by \cite[Theorem 3.3 and Proposition~3.4]{Nonlin} (see also \cite[Theorem 2.2]{Kifer Thermo}) we see that the measures $\nu_j$ described after Assumptions \ref{Ass n 0} coincide with the normalized volume measure on $M$. Thus, we get all the limit theorems  with respect to any absolutely continuous measure whose density is H\"older continuous.

\subsubsection{Subshifts of finite type}
Let $\mathcal A_j=\{0,1,...,d_j-1\}$ with $\DS \sup_j d_j<\infty$. Le
 $A^{(j)}$ be matrices of sizes $d_j\times d_{j+1}$ with 0-1 entries. We suppose that there exists an $M\in\bbN$ such that for every $j$ the matrix $A^{(j)}\cdot A^{(j+1)}\cdots A^{(j+M)}$ has positive entries. 
Define 
\begin{equation}
\label{DefOneSided}
X_j=\left\{(x_{j,k})_{k=0}^\infty: \,x_{j,k}\in \cA_{j+k}, A^{(j+k)}_{x_{j,k}, x_{j,k+1}}=1\right\}.
\end{equation}
Let $T_j:X_j\to X_{j+1}$ be the left shift.  
Consider a metric $\mathsf{d}_j$ on $X_j$ given by 
$$
\mathsf{d}_j(x,y)=2^{-\inf\{k:\, x_{j,k}\not= y_{j,k}\}}.
$$
With this metric  the maps $T_j$ satisfy Assumptions \ref{AssPairing} and \ref{Ass n 0}. In order to introduce appropriate measures $m_0$ note that we can extend  the dynamics for negative times by
 defining $X_j$ for $j\!\!<\!\!0$ via appropriate extensions of the sequences $(d_j)$ and $(A^{(j)})$.  Note that such extensions are highly non-unique.
 %$X_j$ to negative $j$'s in several ways. 
 Each one of these extensions gives raise to a unique time $0$ Gibbs measure $\mu_0$ (see Appendix \ref{ScSFT-Gibbs}).
Thus, all of our results hold true when starting with any measure which is absolutely continuous with H\"older continuous  density  with respect to  $m_0=\mu_0$.

%Next, for every point $x=(x_{j+k})_{k\geq 0}\in X_j$ and all $0\leq r_1\leq r_2$ we denote 
%$$
%[x_{j+r_1},x_{j+r_1+1%},...,x_{j+r_2}]=\{x'=(x'_{j+k})_{k\geq 0}\in X_j:\,x'_{j+k}=x_{j+k},\,\forall r_1\leq k\leq r_2\}.
%$$

\subsubsection{Two sided SFT}\label{2side}
Using the same notations like in the previous section but also considering negative integers $j$, we define 
\begin{equation}
\label{DefTwoSided}
\tilde X_j=\left\{(x_{j,k})_{k=-\infty}^\infty: \,x_{j,k}\in \cA_{j+k}, \;\; A^{(j+k)}_{x_{j,k}, x_{j,k+1}}=1\right\}.
\end{equation}
Let $\tilde T_j:\tilde X_j\to \tilde X_{j+1}$ be the left shift, and let 
$\DS
\tilde T_j^n=\tilde T_{j+n-1}\circ\cdots\circ\tilde T_{j+1}\circ\tilde T_j.
$
Consider the metric $\mathsf{\tilde d}_j$ on $\tilde X_j$ given by 
$\DS
\mathsf{\tilde d}_j(x,y)=2^{-\inf\{|k|:\, x_{j,k}\not= y_{j,k}\}}.
$
Let 
 $\pi_j:\tX_j\to X_j$ be
the natural projection.  

By Lemma \ref{Sinai} (proven in Appendix \ref{ScSFT-Gibbs}), given a sequence of functions $\textbf{f}_j:\tilde X_j\to\bbR$  with $\DS \sup_j\|\textbf{f}_j\|_\al<\infty$ 
 there is sequences of functions $f_j:X_j\to\bbR$ and $u_j:\tilde X_j\to\bbR$ such that 
 $\DS \sup_j\|f_j\|_{\al/2}<\infty$,  $\DS\fu= \sup_j\|u_j\|_{\al/2}<\infty$ and 
 $$
\textbf{f}_j=f_j\circ\pi_j+u_{j+1}\circ \tilde T_j-u_j.
 $$
Therefore, denoting $\DS S_n\textbf{f}=\sum_{j=0}^{n-1}\textbf{f}_j\circ \tilde T_0^j$ we have
 $\DS
 \sup_n\sup|S_n\textbf{f}-S_nf|<\infty.
 $

Next, let 
$\gamma_j$ denote  a Gibbs measure of the two sided shift at time $j$ 
(see Appendix~\ref{ScSFT-Gibbs}). 
 Then $\mu_j=(\pi_j)_*\gamma_j$
are also Gibbs measures for one sided subshift and they satisfy the assumptions
of Section \ref{Sec Prim} (see Appendix \ref{ScSFT-Gibbs} for details).
We view $S_n\textbf{f}$ as random variables on $(\tilde X_0,\text{Borel},\gamma_0)$.
% with respect to this probability measure.
 Then 
\begin{equation}\label{App}
A:=\sup_n\|S_n\textbf{f}-S_nf\|_{L^\infty(\gamma_0)}<\infty.
\end{equation}

This is  enough to do deduce all of our results for $S_n\textbf{f}$, relying on the corresponding results for $S_n f$. Indeed, part (ii) of Theorem \ref{BE} is a direct consequence of part (i).  Theorem \ref{BE}(iii) 
also follows from Theorem \ref{BE}(i). Indeed, 
for every random variable $W$ with distribution function $F$ and a function $h$ 
satisfying $H_s(h)<\infty$
we have
$$
E[h(W)]-h(\infty)
=-E\left[\int_{W}^{\infty}h'(x)dx\right]=-\int_{-\infty}^{\infty}h'(x)P(W\leq x)dx= -\int_{-\infty}^{\infty}h'(x)F(x)dx.
$$
To show that Theorem \ref{BE}(i) for $S_nf$ implies Theorem \ref{BE}(i) for  $S_n\textbf{f}$,  
let 
$$
F_n(t)\!=\!\bbP\left(\frac{S_n\textbf{f}}{\sig_n}\!\leq\! t\right),%\quad\text{and}
\quad G_n(t)=\bbP\left(\frac{S_n f}{\kappa_n}\leq t\right)
\text{ where }\kappa_n=\|S_nf\|_{L^2}
\text{ and }\sig_n=\|S_n\textbf{f}\|_{L^2}.$$ 
By \eqref{App}  and the triangle inequality, $|\sig_n-\kappa_n|\leq A$. To complete the proof
 fix $s\geq 0$. Then
$$
F_n(t)\leq G_n(t\sig_n/\kappa_n+A/\kappa_n)\leq \Phi(t\sig_n/\kappa_n+A/\kappa_n)+C_s\left(1+|t\sig_n/\kappa_n+A/\kappa_n|^s\right)^{-1}
$$
$$
\leq \Phi(t)+C(1+|t|)\sig_n^{-1}e^{-ct^2}+\tilde C_s(1+|t|^s)^{-1}
$$
where in the penultimate inequality we have used that $|\Phi(x+\ve)-\Phi(x)|\leq C\ve e^{-x^2/2}$ for every $x$ and $\ve>0$ and that $|t\sig_n/\kappa_n+A/\kappa_n-t|\leq C(|t|+1)/\sig_n$ for some constant $C>0$.
Similarly, 
$$
F_n(t)\geq G_n(t\sig_n/\kappa_n-A/\kappa_n))\geq \Phi(t)-C(1+|t|)\sig_n^{-1}e^{-ct^2}-\tilde C_s(1+|t|^s)^{-1}.
$$

Finally to deduce Theorem \ref{ThWass} for $S_n\textbf{f}$ from the corresponding result for $S_nf$  let us fix some $p\geq1$. Then by Theorem \ref{ThWass}, we can couple $S_nf$ with a standard normal random variable $Z$ 
so that $\|S_n f/\sig_n-Z\|_{L^p}\leq C\sig_n^{-1}$. Now by Berkes--Philipp Lemma \cite[Lemma A.1]{BP},  we can also couple all three random variables $S_n f$, $S_n\textbf{f}$ and $Z$ so that \eqref{App} still holds under the new probability law.

\section{Verification of (LY1), (LY2) and (SC) for some classes of maps}\label{Comp Sec}
In this section we will show that the conditions in Section \ref{Sec2} are in force for the classes of expanding maps considered in both \cite{CR} and \cite{Nonlin}.

\subsection{Verification of our assumptions in the setup of \cite{CR}}\label{Comp1}
 The following assumption is taken form
\cite{CR}.

\begin{assumption}\label{CRass}
\vskip0.2cm
(i) $(X_j,\cB_j,m_j)$ 
coincide with the same probability space $(X,\cB,m)$, $v_j=v$ does not depend on $j$.
\vskip0.2cm
 (ii) Conditions (LY1) and (LY2) hold with $m_j=m$ and $v_j=v$.
\vskip0.2cm
 (iii)  There is a constant $\delta_0>0$ such that
\begin{equation}\label{Min}
\text{ess-inf } \cL_0^n \textbf{1}\geq \del_0
\end{equation}
 %for some $\delta_0>0$ 
 (note that the condition \eqref{Min} is denoted by (Min) in \cite{CR}). 
Here $\textbf{1}$ denotes the function taking the constant value $1$ on $X$.
\vskip0.2cm
 (iv)   There are constants $C_1>0$ and $\del_1\in(0,1)$ such that for every $h\in B$ with $m(h)=0$ 
\begin{equation}\label{CR RPF}
\|\cL_j^n\|_{BV}\leq C_1\del_1^n.
\end{equation}

\end{assumption}

\begin{lemma}
Assumption \ref{CRass} implies the covering condition (SC). 
\end{lemma}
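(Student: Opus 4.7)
The plan is to exploit the spectral gap \eqref{CR RPF} on the codimension-one subspace of mean-zero $BV$ functions to show that $\cL_j^n h$ is a small $L^\infty$-perturbation of $m(h)\cL_j^n\textbf{1}$, which is uniformly bounded below once \eqref{Min} is promoted to a bound uniform in the starting time $j$.

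The first task is to upgrade \eqref{Min} to arbitrary $j$. Using the identity $\cL_0^{j+n}\textbf{1}=\cL_j^n(\cL_0^j\textbf{1})$, Lemma \ref{Bound1 lemma} to get $M:=\sup_j\|\cL_0^j\textbf{1}\|_\infty<\infty$, and positivity of $\cL_j^n$ (so that $M\textbf{1}-\cL_0^j\textbf{1}\geq 0$ implies $\cL_0^{j+n}\textbf{1}\leq M\cL_j^n\textbf{1}$), I would derive
$$\text{ess-inf }\cL_j^n\textbf{1}\geq \tfrac{1}{M}\,\text{ess-inf }\cL_0^{j+n}\textbf{1}\geq \delta_0/M=:\delta_0',$$
valid for all $j\geq 0$ and all $n\geq 1$.

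The main step is then a mean-zero perturbation argument. For $h\in\cC_{j,a}$ with $m(h)>0$ (the case $m(h)=0$ forces $h\equiv 0$ and is trivial), I would decompose $h=m(h)\textbf{1}+h_0$ with $h_0:=h-m(h)\textbf{1}$ of zero mean. Since $\|h\|_1=m(h)$ and $v(h)\leq am(h)$, properties (V1) and (V4) give $\|h_0\|_{BV}\leq C_a m(h)$ with $C_a:=2+a+\sup_j v_j(\textbf{1})$. Applying \eqref{CR RPF} to $h_0$ and then (V3) to pass from $BV$ to $L^\infty$ yields $\|\cL_j^n h_0\|_\infty\leq C_a'\delta_1^n m(h)$ for some $C_a'>0$. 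Combining with the linear splitting
$$\cL_j^n h = m(h)\cL_j^n\textbf{1}+\cL_j^n h_0$$
and the uniform lower bound from the first step gives $\text{ess-inf }\cL_j^n h\geq m(h)\bigl(\delta_0'-C_a'\delta_1^n\bigr)$. Choosing $n(a)$ large enough that $C_a'\delta_1^{n(a)}\leq\delta_0'/2$ then proves (SC) with $\alpha(a)=\delta_0'/2$ (which is in fact independent of $a$).

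The only conceptually nontrivial point is the uniformity in $j$ of \eqref{Min}, which is handled in the opening step via Lemma \ref{Bound1 lemma} and positivity of the transfer operator; the remainder is a routine mean-zero perturbation using the spectral gap \eqref{CR RPF}, and I do not anticipate any serious obstacle.
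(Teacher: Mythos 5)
Your proof is correct and follows essentially the same route as the paper: first promote \eqref{Min} to a bound uniform in $j$ via Lemma \ref{Bound1 lemma} and positivity of $\cL_j^n$, then split $\cL_j^n h$ into $m(h)\cL_j^n\textbf{1}$ plus the image of the mean-zero part $h-m(h)\textbf{1}$, which is controlled by \eqref{CR RPF} (and (V3)). The only cosmetic difference is that you make the (V3) passage from $BV$ to $L^\infty$ explicit, which the paper leaves implicit.
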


Thus, Assumption \ref{CRass} is less general than the combination of (LY1), (LY2) and (SC).

\begin{proof}
Since (LY1) and (LY2) hold, Lemma \ref{Bound1 lemma} implies that 
$\DS C\!\!=\!\!\sup_k\|\cL_0^k1\|_\infty\!\!\in\!\!(0,\infty)$. Hence using the positivity of the operators $\cL_k$ we see that $m$-a.s. we have
$$
\del_0\leq \cL_{0}^{j+n}\textbf{1}=\cL_j^n(\cL_0^j\textbf{1})\leq C\cL_j^n\textbf{1}.
$$
We thus conclude  that 
$\DS
\inf_j\text{ess-inf }\cL_{j}^n\textbf{1}\geq \del_2=\del_0/C.
$

Next, fix some $a$ and let $h\in \cC_{a,j}$  where $\cC_{a,j}$ comes from the condition (SC). 
Denote $\|\cdot\|=\|\cdot\|_{BV}$. Then by \eqref{CR RPF} we have
$$
\|\cL_j^n  h-m(h)\cL_j^n\textbf{1}\|=\|\cL_j^n(h-m(h)\textbf{1})\|\leq C_1\|h-m(h)\textbf{1}\|\del_1^n\leq C_2\|h\|\del_1^n\leq C_3(a)m(h)\del_1^n
$$
for some constant $C_3(a)$ which depends only on $a$ (we can take $C_3(a)=C_2(1+a)$ since $\|h\|=v(h)+m(h)\leq (1+a)m(h)$). Using that $\cL_j^n1\geq \del_2>0$ we conclude that 
$$
\cL_j^n h\geq \del_2m_j(h)-C_3(a)m(h)\del_1^n=(\del_2-C_3(a)\del_1^n)m(h)\quad
\text{($m$-a.s.)}.
$$
Let $n(a)$ be the smallest positive integer such that $C_3(a)\del_1^{n(a)}\leq \frac12 \del_2$. Then for $n\geq n(a)$ we have 
$\DS
\cL_j^n h\geq \frac12\del_2 m(h)
$ ($m$-a.s.)
and so (SC)  holds with $\al(a)=\frac12\del_2$. 
\end{proof}

\subsection{Verification of our assumptions   in the setup of \cite{Nonlin}}\label{Gibbs}
 Consider maps $T_j$  from \S \ref{Eg Gibbs}. We also suppose that we can extend the sequence $(T_j)_{j\geq0}$ to a two sided sequence $(T_j)_{j\in\bbZ}$ with the same properties. This is possible if there is a map $T_{-1}:X_0\to X_0$ such that the sequence $(T_{-1}^{j})_{j\geq0}$ has same paring property and covering assumption like the sequence $(T_j)_{j\geq0}$ (this is the case when $X_0=X_1$). Indeed, in this case we can define $T_j=T_{-1}$ for $j<0$. The need in a two sided sequences in this context arises from the lack of a given reference measure $m_0$, as will be elaborated in what follows.
 %We assume here that $X_j$ is a metric space. Let $d_j$ be the metric on $X_j$ and suppose that $\text{diam}(X_j)\leq 1$. 
% Let $v_j$ is the H\"older constant corresponding to some fixed exponent $\al\in(0,1]$. 

% We suppose that  there are constants $\xi\leq 1$ and $\gamma>1$ such that for every two points $x,x'\in X_{j+1}$ with $d_{j+1}(x,x')\leq \xi$ we can write 
% $$
%T_j^{-1}\{x\}=\%{y_i(x)\}, T_j^{-1}(x')=\{y_i(x')\}
% $$
%with 
%$$
%d_j(y_i(x),y_i(x'))\leq \gamma^{-1}d_{j+1}%(x,x').
%$$
% Moreover, we suppose that $\sup_j \deg (T_j)<\infty$ and that there exists $n_0\in\bbN$ and $\xi\in(0,1)$ such that for every $j$ and $x\in X_j$ we have 
%\begin{equation}\lab%el{Cover}
%T_j^{n_0}B_j(x,\xi)=%X_{j+n_0}
%\end{equation} 
%where $B_j(x,\xi)$ is the closed ball in $X_j$ centered at $x$ with radius $\xi$.

%Let us take $v_j$ to be the H\"older constant corresponding to some fixed exponent $\al\in(0,1]$, and let us denote $\|\cdot\|_{BV}=\|\cdot\|_\al$.
Fix some H\"older exponent $\al\in(0,1]$ and let $\phi_j:X_j\to\bbR,$ $j\in\bbZ$ be such that $\DS \sup_j\|\phi_j\|_\al<\infty$.
 Let the operator $\textbf{L}_j$ be given by
$$
\textbf{L}_jg(x)=\sum_{y: T_jy=x}e^{\phi_j(y)}g(y).
$$
Then as proven\footnote{This requires us to have a two sides sequence.} in \cite{Nonlin}, there are strictly positive functions $\bar h_j\in B_j$, probability measures $\nu_j$ on $X_j$ and numbers $\la_j>0$ such that $\DS 0<\inf_j\inf \bar h_j\leq \sup_j\|\bar h_j\|_{\al}<\infty$ and 
\begin{equation}\label{ExpConv}
\left\|\textbf{L}_{j}^n/\la_{j,n}-\nu_j\otimes \bar h_{j+n}\right\|_{\al}\leq C\del^n, \quad \del\in(0,1)
\end{equation}
where $\DS \la_{j,n}=\prod_{k=j}^{j+n-1}\la_k$ and $(\nu\otimes h)$ is the linear operator $g\to \nu(g)h$. Let 
$$
\cL_{j}(g)=\textbf{L}_j(g\bar h_j)/\la_j \bar h_{j+1}
$$
and let $m_j=\mu_j$ be the sequential Gibbs measures corresponding to the potentials $(\phi_j)$, that is $\mu_j=\bar h_j\nu_j$. 
Then $\cL_j$ is the dual of $T_j$ with respect to  $\mu_j$. Moreover, $(T_j)_*\mu_j=\mu_{j+1}$ and $\cL_j\textbf{1}=\textbf{1}$.

%For the sake of completeness, let us provide the following characterization for the measures $m_j$. 

%\begin{proposition}\label{UnqPrp}
% The $(m_j)$ is the unique sequence of probability measures on $X_j$ such that:
 
%(i) $(T_j)_*m_j=m_{j+1}$.

%(ii) $m_j=d_j d\nu_j$ for a sequence of bounded measurable  functions $d_j$.
 %Say somehere, in many applications we have that \nu_j is an appropriate reference (volume) mesaure...
%\end{proposition}
%\begin{remark}
%In the circumstances of  %\ref{RemGibbs}, the above results %means that $(m_j)$ is the unique sequence of probability measures on $M$ which is invariant in the %sense of property (i) and is absolutely continuous with respect to the volume measure. This is a %straight forward generalization of a well known result for determinsitic expanding maps %satisying and appropriate covering property, see \cite{}.
%\end{remark}

\begin{proposition}
 The operators $\cL_j$ obey conditions (LY1), (LY2) and (SC).   
\end{proposition}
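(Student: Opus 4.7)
The plan is to derive all three conditions from the identity
$$\cL_j^n g = \textbf{L}_j^n(g \bar h_j)/(\la_{j,n} \bar h_{j+n}),$$
which follows by induction on $n$ from the definition of $\cL_j$, combined with \eqref{ExpConv} and the uniform bounds on $\bar h_j$, $1/\bar h_j$ and $\la_j$. The eigenvalue equation $\textbf{L}_j \bar h_j = \la_j \bar h_{j+1}$ is exactly $\cL_j \textbf{1} = \textbf{1}$, so $\mu_j(\cL_j^n h) = \mu_j(h)$ for every $h$. Applying \eqref{ExpConv} to the input $g\bar h_j$, dividing by $\bar h_{j+n}$, and using (V5)--(V6) together with $\|g\bar h_j\|_\al \leq C \|g\|_{BV,j}$ yields the master estimate
$$\|\cL_j^n g - \mu_j(g)\|_{BV,j+n} \leq C \del^n \|g\|_{BV,j}, \qquad n \geq 1.$$
This will be the main tool for (LY2) and (SC).

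For (LY1), I would argue directly. Writing
$$\cL_j g(x) = \sum_{T_j y = x} w_{j,y}(x) g(y), \qquad w_{j,y}(x) = \frac{e^{\phi_j(y)} \bar h_j(y)}{\la_j \bar h_{j+1}(x)},$$
the weights $w_{j,y}$ are uniformly bounded and uniformly H\"older (via (V5), (V6) and the uniform bounds on $\phi_j, \bar h_j, \la_j$ and on their lower bounds). The bounded degree in Assumption \ref{AssPairing} and the pairing property then yield a uniform bound on $\|\cL_j\|_{BV}$ by a standard calculation.

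For (LY2), I would split $h = \mu_j(h) \textbf{1} + (h - \mu_j(h))$. Since $\cL_j^n \textbf{1} = \textbf{1}$, properties (V1), (V2), (V4) together with $|\mu_j(h)| \leq \|h\|_1$ show that the constant term contributes at most $v_\ast \|h\|_1$ to $v_{j+n}(\cL_j^n h)$, where $v_\ast = \sup_j v_j(\textbf{1})$. For the mean-zero part, the master estimate gives
$$v_{j+n}(\cL_j^n(h - \mu_j(h))) \leq C' \del^n \|h - \mu_j(h)\|_{BV,j} \leq C'' \del^n \bigl(v_j(h) + \|h\|_1\bigr).$$
Choosing $N$ so large that $C'' \del^N < 1$ yields (LY2) with $\rho = C''\del^N$ and $K = v_\ast + C''\del^N$.

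For (SC), take $h \in \cC_{a,j}$. Since $h \geq 0$ we have $\|h\|_1 = m_j(h)$ and $v_j(h) \leq a m_j(h)$, hence $\|h\|_{BV,j} \leq (1+a) m_j(h)$. The master estimate then gives
$$\cL_j^n h \geq m_j(h) - C_1(1+a) \del^n m_j(h) = m_j(h)\bigl(1 - C_1(1+a)\del^n\bigr),$$
so choosing $n(a)$ such that the right factor exceeds $\tfrac12$ yields (SC) with $\al(a) = \tfrac12$. The main obstacle is really just careful bookkeeping of constants arising from (V3), (V5), (V6) and the uniform bounds on $\bar h_j$; no estimate beyond \eqref{ExpConv} itself is required.
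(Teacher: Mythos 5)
Your proof is correct, and while it reaches the same conclusion as the paper's, it takes a noticeably different route for (LY1) and especially (LY2). You first derive the master estimate $\|\cL_j^n g-\mu_j(g)\mathbf 1\|_{BV}\le C\del^n\|g\|_{BV}$ from \eqref{ExpConv} by dividing through by $\bar h_{j+n}$; this is precisely \eqref{rppt}, which the paper simply cites from \cite{Nonlin}. For (SC), your argument and the paper's are essentially identical: $\cL_j\mathbf 1=\mathbf 1$ means the lower bound $\del_0$ from the Section 5.1 argument is $1$, and the rest is bookkeeping with $\|h\|_{BV}\le(1+a)m_j(h)$ on the cone.

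For (LY1) the paper points to ``properties of the non-normalized RPF triplets (or from \eqref{rppt})'', whereas you do a direct computation from the explicit weighted-sum formula $\cL_jg(x)=\sum_{T_jy=x}w_{j,y}(x)g(y)$ using the bounded degree and the pairing property; both are valid, and yours is more elementary but carries a gloss (``a standard calculation'') that the paper doesn't even attempt. The more substantive divergence is (LY2): the paper cites \cite[Lemma~5.12.2]{HK} together with a norm-equivalence remark, while you extract (LY2) directly from the master estimate by splitting $h=\mu_j(h)\mathbf 1+(h-\mu_j(h))$, observing that $\cL_j^N\mathbf 1=\mathbf 1$ and that the mean-zero part decays geometrically in $\|\cdot\|_{BV}$, so the variation coefficient $C''\del^N$ can be made $<1$. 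This is cleaner and more self-contained. It is worth flagging one logical point: in the paper's abstract framework (LY2) is an \emph{input} to Theorem \ref{RPF 0}, so one normally cannot derive (LY2) from exponential decay of $\cL_j^n$ without circularity; the reason your route is legitimate here is that in this specific subsection the exponential decay \eqref{ExpConv} is established independently in \cite{Nonlin} (via complex Hilbert-metric methods), so it may be used as a black box. It would be good to make this non-circularity explicit if you write this up, since a reader coming from the general framework might otherwise be uneasy.
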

\begin{proof}
First, the uniform boundedness (LY1) of the operators $\cL_j$ follows from the properties of the non-normalized RPF triplets (or from \eqref{rppt}). Second,  the Lasota--Yorke inequality (LY2) was obtained in \cite[Lemma 5.12.2]{HK}. Note that in \cite[Lemma 5.12.2]{HK} the weak norm $\|\cdot\|_{L^1}$ is replaced with the (weak) norm 
$\|\cdot\|_\infty$. However, we have $\|g\|_\infty\leq \ve_r v(g)+C_r\|g\|_{L^1}$ with $\ve_r\to 0$ as $r\to 0$. Using this we have that the Lasota--Yorke inequality with respect to $(v_j(\cdot),\|\cdot\|_{L^1})$ is equivalent to the 
Lasota--Yorke inequality with respect to $(v_j(\cdot),\|\cdot\|_{\infty})$. 

Third,
in   \cite{Nonlin} we proved that there are constants $C>0$ and $\del\in(0,1)$ such that for all $j,n$ and a H\"older continuous function $h:X_j\to\bbR$,
\begin{equation}\label{rppt}
   \|\cL_j^n h- m_j(h) \textbf{1}\|_{\al}  \leq C\del^n 
\end{equation}
 and so  \eqref{CR RPF} holds. 
Since $\cL_j\textbf{1}\!\!=\!\!\textbf{1}$ 
we have $\DS \inf_{j,n}\inf\cL_j^n\textbf{1}\!\!=\!\!\del_0\!\!=\!\!1$. Thus  we can repeat the  arguments from the previous section verbatim with time dependent $v$ and $m$ and obtain (SC).
\end{proof}

\section{Moment estimates}\label{Sec Mom}
Let 
$
\DS T_j^n=T_{j+n-1}\circ\cdots\circ T_{j+1}\circ T_j.
$
Consider a sequence of real valued functions $f_k\in B_k$, $k\geq 0$ such that 
$\DS \sup_k\|f_k\|_{BV}<\infty$. Our goal is to obtain limit theorems for
the sequence %of functions
$\DS S_n=\sum_{j=0}^{n-1}f_j\circ T_0^j$
considered as random variables on the probability space $(X,\cB,m_0)$. It what follows it will be convenient to consider $(f_j)$ as  two sided sequence by setting $f_j=0$ for $j<0$.

\subsection{The pulled back measures and their transfer operators}
\label{pull}
Consider the sequence of measures $\tilde m_j=(T_0^j)_*m_0$ on $(X_j,\cB_j)$. Then $\tilde m_0=m_0$ and for all $j>0$ and each bounded measurable function $G:X_j\to\bbC$, 
$$
\tilde m_j(G)=m_0(G\circ T_0^j)=m_j(G\cdot\cL_0^j\textbf{1}).
$$
\begin{remark}
Note that in the case when $m_j=\mu_j$ is an equivariant sequence (i.e. $(T_j)_*\mu_j=\mu_{j+1}$) then $\tilde m_j=\mu_j$.
 \end{remark}
Let the operator $\tilde\cL_j$ be the dual of $T_j$ with respect to the measure $\tilde m_j$ and $\tilde m_{j+1}$%, namely the one 
defined by the duality relation
\begin{equation}
\label{DefTL}
\int f\cdot (\tilde\cL_j g)d\tilde m_{j+1}=\int (f\circ T_j)\cdot g\cdot d\tilde m_j
\end{equation} 
for all functions such that both integrals are well defined. Then $\tilde \cL_j\textbf{1}=\textbf{1}$ because $(T_j)_*\tilde m_j=\tilde m_{j+1}$. Let 
$$
\tilde \cL_j^n=\tilde\cL_{j+n-1}\circ\cdots\circ\tilde\cL_{j+1}\circ\tilde\cL_j.
$$

\begin{proposition}\label{Tilde prop}
There are constants $C_0>0$, $J\in\bbN$ and $\del_0\in(0,1)$ such that for all $j\geq 0$ and $n\geq 1$ such that $j+n\geq J$ we have 
$$
\|\tilde\cL_j^n(\cdot)-\tilde m_j(\cdot)\mathbf{1} \|_{BV}\leq C_0\del_0^n.
$$
\end{proposition}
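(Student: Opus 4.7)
The plan is to reduce $\tilde\cL_j^n$ to a normalized version of $\cL_j^n$ and then apply Theorem~\ref{RPF 0} together with the iterated Lasota--Yorke estimate. Set $\rho_j := \cL_0^j \mathbf{1}$, so that $d\tilde m_j = \rho_j\, dm_j$ (as follows from $\tilde m_j(G) = m_j(G \cL_0^j\mathbf{1})$ in the excerpt). From the duality relation \eqref{DefTL} one verifies directly that $\tilde\cL_j g = \cL_j(g\rho_j)/\rho_{j+1}$, and iteration, using the semigroup property $\rho_{j+n} = \cL_j^n \rho_j$, gives
$$
\tilde\cL_j^n g \;=\; \frac{\cL_j^n(g\rho_j)}{\cL_j^n\rho_j}.
$$
Since $\tilde m_j(g)=m_j(g\rho_j)$, subtracting yields the key identity
$$
\tilde\cL_j^n g - \tilde m_j(g)\,\mathbf{1} \;=\; \frac{\cL_j^n(g\rho_j) - m_j(g\rho_j)\,\cL_j^n\rho_j}{\cL_j^n\rho_j}.
$$

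Next I would establish uniform control on the densities $\rho_j$. From the iterated Lasota--Yorke inequality \eqref{LY Iter} and $\|\rho_j\|_1=1$ (Lemma~\ref{Bound1 lemma}) one gets $\sup_j \|\rho_j\|_{BV}<\infty$. By Theorem~\ref{RPF 0}(i), $\|\rho_j - h_j\|_{BV}\leq C\del^j$; combined with the uniform lower bound on $h_j$, this yields some $J\in\bbN$ such that for all $k\geq J$ we have $\rho_k \geq c_0>0$ uniformly, and hence $\sup_{k\geq J}\|1/\rho_k\|_{BV}<\infty$ by (V6). Now apply Theorem~\ref{RPF 0}(i) to both $g\rho_j$ and $\rho_j$ (using $m_j(\rho_j)=1$) to get
$$
\cL_j^n(g\rho_j) = m_j(g\rho_j)\,h_{j+n} + E_1, \qquad \cL_j^n\rho_j = h_{j+n} + E_2,
$$
with $\|E_1\|_{BV}\leq C\del^n\|g\rho_j\|_{BV}$ and $\|E_2\|_{BV}\leq C\del^n\|\rho_j\|_{BV}$. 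Property (V5) together with $\sup_j\|\rho_j\|_{BV}<\infty$ bounds $\|g\rho_j\|_{BV}\lesssim \|g\|_{BV}$, and $|m_j(g\rho_j)|\leq \|g\|_\infty\lesssim \|g\|_{BV}$ by (V3). So the numerator equals $E_1 - m_j(g\rho_j)E_2$, which has BV norm $O(\del^n\|g\|_{BV})$.

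Finally, for $j+n\geq J$ and $n\geq 1$, the denominator $\cL_j^n\rho_j=\rho_{j+n}$ is uniformly bounded in BV and uniformly bounded below, and one more application of (V5) (to the product of the numerator with $1/\rho_{j+n}$) gives the desired operator-norm bound with $\del_0=\del$. The main technical obstacle is handling the denominator: it is uniformly bounded away from zero only once $j+n$ is sufficiently large, which is precisely why the hypothesis $j+n\geq J$ appears in the statement; here Theorem~\ref{RPF 0}(i) is essential to compare $\rho_{j+n}$ to the uniformly non-degenerate equivariant density $h_{j+n}$. Note that the rate is $\del^n$ (not $\del^{j+n}$) because Theorem~\ref{RPF 0} gives $n$-step contraction of $\cL_j^n$ independent of the starting time $j$.
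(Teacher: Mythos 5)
Your proof is correct and follows essentially the same route as the paper: derive the formula $\tilde\cL_j^n g = \cL_j^n(g\,\cL_0^j\mathbf{1})/\cL_0^{j+n}\mathbf{1}$, invoke Theorem~\ref{RPF 0} both for the operator-norm contraction and for $\|\cL_0^m\mathbf{1}-h_m\|_{BV}\to 0$ exponentially, use the uniform lower bound on $h_j$ to choose $J$ so that $\cL_0^{j+n}\mathbf{1}$ is uniformly bounded below, and then appeal to (V5)/(V6) to control the quotient in BV. Your write-up simply makes explicit the algebraic cancellation and the bookkeeping of error terms that the paper compresses into ``now the proposition follows from Theorem~\ref{RPF 0}.''
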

\begin{proof}
Recall that  the functions $h_j$ in Theorem \ref{RPF 0} satisfy 
$\DS \eta:=\inf_j\text{ess-inf }h_j\!\!>\!\!0$. Thus by \eqref{Exp Conv Main} we see that $\text{ess-inf }\cL_j^n\textbf{1}\geq \eta-C\del^n$. Consequently, if we take $\del_0=\frac12\eta$ and $J\in\bbN$ such that $C\del^{J}\leq \frac12\eta$ we see that
$\text{ess-inf }\cL_0^m\textbf{1}\geq \del_0$ for all $m\geq J$. 
Next, a direct calculation shows that 
$\DS
\tilde\cL_j^n g=\frac{\cL_j^n(g\cdot \cL_0^j\textbf{1})}{\cL_0^{j+n}\textbf{1}}.
$
Now the proposition follows from Theorem \ref{RPF 0}, noting that 
$\|\cL_0^m \mathbf 1-h_m\|_{BV}$ decays exponentially fast and using that,
 due to (V6),
 $\DS \sup_j\|1/h_j\|_{BV}<\infty$.  
\end{proof}

\subsection{A martingale coboundary representation}
\label{SSMartCob}

\begin{lemma}\label{Mart lemm}
For every sequence of functions $f_j\in B_j$ such that $\DS \sup_{j}\|f_j\|_{BV}<\infty$ there are functions $M_j=M_j(f)\in B_j$ and $u_j=u_j(f)\in B_j$ such
that
\begin{equation}
\label{MartCob}
\tilde f_j:=f_j-m_0(f_j\circ T_0^j)=M_{j}+u_{j+1}\circ T_j-u_j,\,\,\, j\geq J.
\end{equation}
Moreover, $\DS \sup_j\|u_j\|_{BV}<\infty$ and $(M_{j}\circ T_0^j)_j$  is a reverse martingale difference  with respect to the reverse filtration $\cA_j=(T_0^j)^{-1}\cB_j$ (on the probability space $(X_0,\cB_0,m_0)$). 
Furthermore, $\DS \sup_j\|u_j\|_{BV}$ is bounded above by a constant which depends only on the constants $C$ and $\del$ from Theorem \ref{RPF 0}  and on $\DS \sup_j\|f_j\|_{BV}$.
\end{lemma}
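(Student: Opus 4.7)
The plan is to run a Gordin-style coboundary argument using the transfer operators $\tilde{\mathcal{L}}_j$ from \S\ref{pull} built from the pulled-back reference measures $\tilde m_j = (T_0^j)_* m_0$. The first observation to extract is the bridge between the reverse conditional expectation on $(X_0, m_0)$ and $\tilde{\mathcal{L}}_j$: combining the duality \eqref{DefTL} with $(T_j)_* \tilde m_j = \tilde m_{j+1}$ and $(T_0^j)_* m_0 = \tilde m_j$, one checks that for every bounded measurable $g\colon X_j\to\bbR$,
$$
\bbE_{m_0}\bigl[\,g \circ T_0^j \,\big|\, \cA_{j+1}\bigr] \;=\; (\tilde{\mathcal{L}}_j g) \circ T_0^{j+1}.
$$
Consequently $(M_j \circ T_0^j)$ is a reverse martingale difference for $(\cA_j)$ if and only if $\tilde{\mathcal{L}}_j M_j = 0$, which converts the problem into a purely analytic one.

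With the convention $\tilde f_k := 0$ for $k<0$, I would then set
$$
u_j \;:=\; \sum_{n=1}^{\infty} \tilde{\mathcal{L}}_{\,j-n}^{\,n}\,\tilde f_{j-n}, \qquad j \geq J,
$$
so that only finitely many terms are nonzero. Since $\tilde m_k(\tilde f_k) = m_0(\tilde f_k\circ T_0^k) = 0$ for every $k\geq 0$ by definition of $\tilde f_k$, Proposition \ref{Tilde prop} applied with total time $(j-n)+n=j\geq J$ gives $\|\tilde{\mathcal{L}}_{j-n}^{n}\tilde f_{j-n}\|_{BV} \leq C_0 \delta_0^n \|\tilde f_{j-n}\|_{BV}$. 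Together with $\sup_k\|\tilde f_k\|_{BV}<\infty$ (inherited from $\sup_k\|f_k\|_{BV}<\infty$), this shows the series converges absolutely in $BV$ and $\sup_{j\geq J}\|u_j\|_{BV}$ is bounded by a constant depending only on $C_0,\delta_0$ — hence on the constants $C,\del$ of Theorem \ref{RPF 0} — and on $\sup_j\|f_j\|_{BV}$. Re-indexing the sum gives the key telescoping identity
$$
u_{j+1} \;=\; \tilde{\mathcal{L}}_j \tilde f_j \,+\, \tilde{\mathcal{L}}_j u_j.
$$

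Finally, define $M_j := \tilde f_j + u_j - u_{j+1}\circ T_j$, so that \eqref{MartCob} holds by construction. Applying $\tilde{\mathcal{L}}_j$ and using the general pull-out relation $\tilde{\mathcal{L}}_j(h\circ T_j) = h\cdot\tilde{\mathcal{L}}_j\textbf{1} = h$ (valid since $\tilde{\mathcal{L}}_j\textbf{1}=\textbf{1}$), one obtains $\tilde{\mathcal{L}}_j M_j = \tilde{\mathcal{L}}_j\tilde f_j + \tilde{\mathcal{L}}_j u_j - u_{j+1} = 0$, which by the bridge formula is exactly the reverse martingale property $\bbE_{m_0}[M_j\circ T_0^j\mid\cA_{j+1}]=0$. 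The only subtle point is book-keeping around the initial regime: Proposition \ref{Tilde prop} only delivers exponential decay once the total time exceeds $J$, which is precisely why the decomposition is asserted for $j\geq J$. I do not anticipate a serious obstacle beyond verifying that the constant $C_0\delta_0/(1-\delta_0)\cdot\sup_k\|\tilde f_k\|_{BV}$ governing $\|u_j\|_{BV}$ depends only on the advertised quantities, which is immediate from the proof of Proposition \ref{Tilde prop}.
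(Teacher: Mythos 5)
Your proof is correct and follows essentially the same route as the paper: you set $u_j = \sum_{n\geq 1}\tilde\cL_{j-n}^n\tilde f_{j-n}$, get absolute convergence in $BV$ from Proposition~\ref{Tilde prop} (noting the total time is $j\geq J$ in every term), define $M_j = \tilde f_j + u_j - u_{j+1}\circ T_j$, and verify $\tilde\cL_j M_j = 0$ via the telescoping identity. The only (inconsequential) divergence from the paper is your convention $\tilde f_k=0$ for $k<0$ rather than for $k<J$ — both yield a valid decomposition for $j\geq J$ — and you spell out the bridge formula $\bbE_{m_0}[g\circ T_0^j\mid\cA_{j+1}]=(\tilde\cL_j g)\circ T_0^{j+1}$, which the paper asserts without derivation.
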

\begin{remark}
    Note that by (V8) we also get that $\DS \sup_j\|M_j\|_{BV}<\infty$.
\end{remark}
\begin{proof}[Proof of Lemma \ref{Mart lemm}]
Define $\tilde \cL_j=0$ for $j<0$ and $\tilde f_j=0$ for $j<J$. Set 
\begin{equation}
\label{DefUj}
u_j=\sum_{k=1}^\infty\tilde \cL_{j-k}^k\tilde f_{j-k}=\sum_{k=1}^{j}\tilde\cL_{j-k}^k\tilde f_{j-k}.
\end{equation}

Since $\tilde m_s(\tilde f_s)=0$ for all $s\geq 0$ by Proposition \ref{Tilde prop}, $u_j\in B_j$ and 
$\DS \sup_j\|u_j\|_{BV}<\infty$. Set $M_{j}=\tilde f_j+u_j-u_{j+1}\circ T_j$. It remains to show that $M_{j}\circ T_0^j$ is indeed a reverse martingale difference. To prove that we notice that 
$\bbE[M_{j}\circ T_0^j|\cA_{j+1}]= \tilde \cL_j(M_j)\circ T_0^{j+1}$. On the other hand, a direct calculation  using \eqref{DefUj} shows that  $\tilde \cL_j(M_{j})=0$.
\end{proof}

\subsection{On the divergence of the variance}

The first step in proving a central limit theorem is to show that the individual summands are negligible in comparison with the variance of the sum.  In particular, we need to know when the variance is bounded.
 In this section we prove the following result.

\begin{theorem}\label{Var them}
The following conditions are equivalent.
\vskip0.2cm
(1) $\DS \liminf_{n\to\infty}\text{Var}_{m_0}(S_n)<\infty$.
\vskip0.2cm
(2) $\DS \sup_{n\in\bbN}\text{Var}_{m_0}(S_n)<\infty$

%(3) $\sup_{n\in\bbN}\|S_n-m_0(S_n)\|_{L^\infty(m_0)}<\infty$
\vskip0.2cm
(3) We can write 
$\DS
f_j=m_0(f_j\circ T_0^j)+M_j+u_{j+1}\circ T_j-u_j
$
with
$u_j,M_j\in B_j$ such that $M_j\circ T_0^j$ is a reverse martingale difference on $(X_0,\cB_0, m_0)$ with respect to the reverse filtration $T_0^{-j}\cB_j$,
$\DS  \sup_j\|u_j\|_{BV}<\infty,$\;
$\DS\sup_j\|M_j\|_{BV}\!\!<\!\!\infty$, and $\DS \sum_{j}\text{Var}_{m_0}(M_j\circ T_0^j)\!\!<\!\!\infty.$

%(7) $\sup\|S_n-\mu_0(S_n)\|_{L^\infty(\mu_0)}<\infty$

\end{theorem}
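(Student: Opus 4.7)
The strategy is to prove the cycle $(2)\Rightarrow(1)\Rightarrow(3)\Rightarrow(2)$, with the first implication trivial. The crux is to upgrade Lemma \ref{Mart lemm} to a martingale-coboundary decomposition valid for \emph{all} $j\geq 0$ (the lemma only produces one for $j\geq J$), after which I can extract $\text{Var}_{m_0}(S_n)$ up to a uniformly bounded error from the orthogonality of the resulting reverse martingale differences.

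To build the global decomposition, set $\tilde f_s := f_s - m_0(f_s\circ T_0^s)$ for $s\geq 0$, $\tilde f_s := 0$ for $s<0$, and define
$$u_j := \sum_{k\geq 1}\tilde\cL_{j-k}^k\tilde f_{j-k}, \qquad M_j := \tilde f_j + u_j - u_{j+1}\circ T_j.$$
The sum defining $u_j$ is finite, as $\tilde f_s$ vanishes for $s<0$. For $k\geq J$, Proposition \ref{Tilde prop} applied to the zero-$\tilde m_{j-k}$-mean function $\tilde f_{j-k}$ gives $\|\tilde\cL_{j-k}^k\tilde f_{j-k}\|_{BV}\leq C\delta_0^k\|\tilde f_{j-k}\|_{BV}$ uniformly in $j$; the remaining finitely many terms with $k<J$ are bounded uniformly in $j$ via (LY1). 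Hence $\sup_j\|u_j\|_{BV}<\infty$, and then $\sup_j\|M_j\|_{BV}<\infty$ follows from \eqref{V8} together with the uniform BV bound on the $f_j$. Splitting off the $k=1$ term in $u_{j+1}$ and reindexing the rest yields the key identity $u_{j+1} = \tilde\cL_j\tilde f_j + \tilde\cL_j u_j$, which combined with $\tilde\cL_j(u_{j+1}\circ T_j) = u_{j+1}$ gives $\tilde\cL_j M_j = 0$. Since $\bbE_{m_0}[M_j\circ T_0^j\mid\cA_{j+1}]=(\tilde\cL_j M_j)\circ T_0^{j+1}$ for the decreasing filtration $\cA_j := T_0^{-j}\cB_j$, the random variables $D_j := M_j\circ T_0^j$ form a reverse martingale difference sequence.

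Using $u_0 = 0$ the decomposition telescopes to
$$S_n - m_0(S_n) = \sum_{j=0}^{n-1}D_j + u_n\circ T_0^n.$$
The reverse-martingale property makes the $D_j$ centered and pairwise orthogonal in $L^2(m_0)$, so $\|\sum_j D_j\|_{L^2}^2 = \sum_{j=0}^{n-1}\text{Var}(D_j)$; and (V3) together with $\sup_j\|u_j\|_{BV}<\infty$ yields $\|u_n\circ T_0^n\|_{L^2(m_0)}\leq\|u_n\|_{L^\infty}\leq C'$ independently of $n$. The triangle inequality in $L^2(m_0)$ then gives
$$\left|\sqrt{\text{Var}(S_n)}-\sqrt{\textstyle\sum_{j=0}^{n-1}\text{Var}(D_j)}\right|\leq C'.$$
Because the partial sum $\sum_{j=0}^{n-1}\text{Var}(D_j)$ is nondecreasing in $n$, its $\liminf$ equals its supremum. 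Thus $\liminf_n\text{Var}(S_n)<\infty$ forces $\sum_{j\geq 0}\text{Var}(D_j)<\infty$, which in turn bounds $\sup_n\text{Var}(S_n)$; this gives $(1)\Leftrightarrow(2)$, and combined with the construction above yields $(1)\Rightarrow(3)$ and $(3)\Rightarrow(2)$.

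The principal obstacle is the construction in the second paragraph. Lemma \ref{Mart lemm} zeroes out $\tilde f_j$ for $j<J$ and so provides the decomposition only from index $J$ onward; if one naively set $M_j = \tilde f_j$ for $j<J$ the reverse-martingale property would fail at those indices. The modified definition of $u_j$ keeps the contribution of the initial segment, and one has to verify carefully that the first $J$ terms in the series for $u_j$ remain uniformly controlled despite the absence of the contraction estimate from Proposition \ref{Tilde prop} for $k<J$, and that the algebraic identity $u_{j+1}=\tilde\cL_j\tilde f_j+\tilde\cL_j u_j$ is preserved. Once these are in place the rest is standard $L^2$ bookkeeping.
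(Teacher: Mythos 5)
Your overall strategy — martingale--coboundary decomposition, orthogonality of the martingale increments, an $L^\infty$-bounded corrector, and the observation that the nondecreasing sequence $\sum_j\text{Var}(D_j)$ makes $\liminf=\sup$ — is exactly the paper's. The gap lies in the construction you propose in the second paragraph.

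You redefine $u_j=\sum_{k\geq 1}\tilde\cL_{j-k}^k\tilde f_{j-k}$ \emph{without} zeroing out $\tilde f_s$ for $0\leq s<J$, and claim BV control by invoking Proposition \ref{Tilde prop} ``for $k\geq J$'' and (LY1) ``for $k<J$''. This misreads the proposition: applying it to $\tilde\cL_{j-k}^k$ uses starting index $j-k$ and $k$ iterations, so the condition $j'+n\geq J$ becomes $(j-k)+k=j\geq J$, not $k\geq J$. Thus for $j\geq J$ the contraction covers \emph{every} $k\leq j$ (your proposed threshold is irrelevant there), while for $j<J$ it covers \emph{none} of the terms. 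Worse, for $j<J$ the operators $\tilde\cL_{j-k}^k(\cdot)=\cL_{j-k}^k\big(\cdot\,\cL_0^{j-k}\mathbf 1\big)/\cL_0^{j}\mathbf 1$ divide by $\cL_0^{j}\mathbf 1$, and the proof of Proposition \ref{Tilde prop} establishes $\text{ess-inf}\,\cL_0^m\mathbf 1\geq\delta_0$ only for $m\geq J$; in the abstract setup $\cL_0^{j}\mathbf 1$ can vanish on a positive-$m_j$-measure set for small $j$ (e.g.\ if $T_0$ is not essentially onto), so $\tilde\cL_{j-k}^k\tilde f_{j-k}$ need not lie in $B_j$ at all. (LY1) is a bound on $\cL_j$, not on $\tilde\cL_j$, and does not rescue this. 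Consequently $\sup_j\|u_j\|_{BV}<\infty$, and hence the membership $u_j,M_j\in B_j$ required by condition (3), is not established.

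The paper sidesteps this entirely: Lemma \ref{Mart lemm} keeps $\tilde f_j=0$ for $j<J$, so every nonzero term of the series for $u_j$ falls into the regime of Proposition \ref{Tilde prop}, and the resulting telescoping yields $S_n-S_nM=\sum_{j<J}f_j\circ T_0^j+u_n\circ T_0^n$, whose $L^\infty$ norm is a uniform constant. (You are right that the paper's stated bound $\|S_n-S_nM\|_\infty\leq 2\sup_j\|u_j\|_\infty$ silently drops the $\sum_{j<J}f_j\circ T_0^j$ contribution, but the cure is just a larger constant; your more delicate redefinition of $u_j$ creates a problem it does not solve.)
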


\begin{proof}
First, it is enough to prove the theorem when $m_0(f_j\circ T_0^j)=0$ for all $j$. In this case,
by Lemma   \ref{Mart lemm} we can write
$$
f_j=m_0(f_j\circ T_0^j)+M_j+u_{j+1}\circ T_j-u_j=M_j+u_{j+1}\circ T_j-u_j
$$ 
with $u_j$ and $M_j$ like in (3), except that in general the sum of the variances of $M_j$ might not converge. Notice now that 
\begin{equation}\label{M apprx}
\|S_n-S_nM\|_{L^2(m_0)}\leq \|S_n-S_nM\|_{L^\infty(m_0)}\leq 2\sup_{j}\|u_j\|_{L^\infty(m_j)}:=U<\infty
\end{equation}
where $\DS S_nM=\sum_{j=0}^{n-1}M_j\circ T_0^j$.  

Now assume (1), and let $n_k$ be an increasing sequence such that $n_k\to\infty$ and $\sig_{n_k}=\|S_{n_k}\|_{L^2}\leq C$ for some constant $C>0$. Then by \eqref{M apprx}, 
$\DS
\|S_{n_k}M\|_{L^2(m_0)}^2\leq (C+U)^2<\infty.
$
However, since $M_j\circ T_0^j$ is a reverse martingale, we have
$$
\sum_{j=0}^{n_k-1}\text{Var}_{m_0}(M_j\circ T_0^j)=\|S_{n_k}M\|_{L^2(m_0)}^2\leq (C+U)^2.
$$
Now, since  $\DS V_n:=\|S_nM\|_{L^2}^2=\sum_{j=0}^{n-1}\text{Var}_{m_0}(M_j\circ T_0^j)$ is increasing we conclude that the summability condition in (3) holds. This shows that (1) implies (3). 

Next, it is clear that (2) implies (1). 
Thus, to complete the proof of the theorem it is enough to show that (3) implies (2), but this also follows from \eqref{M apprx} since the latter yields
$\DS
\|S_{n}\|_{L^2(m_0)}^2\leq (V_n+U)^2<\infty.
$
\end{proof}
 
\subsection{Quadratic variation and moment estimates}
Recall that the (unconditioned)  quadratic variation difference of the reverse  martingale difference $M_{j}\circ T_0^j$ is given by 
$\DS 
q_j(M):=M_{j}^2\circ T_0^j.
$
Henceforth we denote $Q_j=M_j^2$ and let
$$
S_{j,n}=S_{j,n}f=\sum_{k=j}^{j+n-1}f_k \circ T_j^k, \quad
\bar S_{j,n}=S_{j,n}-\tilde m_j(S_{j,n})=S_{j,n}-m_0(S_{j,n}\circ T_0^j).
$$
$S_{j,n}M$ and $S_{j,n}Q$ are defined similarly.

\begin{proposition}\label{Thm 4.1'}
There is a constant $C$ which depends only on  the constants from Theorem \ref{RPF 0}  and on $\DS \sup_j\|f_j\|_{BV}$ such that for all $j\geq J$ (where $J$ comes from Proposition \ref{Tilde prop}) we have
$\DS
\text{Var}_{m_0}(S_{j,n} Q)\leq C(1+\text{Var}(S_{j,n} f)).
$
\end{proposition}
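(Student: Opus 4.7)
The plan is to apply the martingale--coboundary decomposition of Lemma~\ref{Mart lemm} a second time, now to the sequence $Q_k=M_k^2$. Since $\sup_k\|M_k\|_{BV}<\infty$, property (V5) yields $\sup_k\|Q_k\|_{BV}<\infty$, so Lemma~\ref{Mart lemm} produces uniformly BV-bounded $N_k, w_k\in B_k$ with $\tilde Q_k:=Q_k-\tilde m_k(Q_k)=N_k+w_{k+1}\circ T_k-w_k$ for $k\geq J$, where $N_k\circ T_0^k$ is a reverse martingale difference and $w_k=\sum_{m\geq 1}\tilde\cL_{k-m}^m\tilde Q_{k-m}$. Summing and telescoping give $S_{j,n}Q-\tilde m_j(S_{j,n}Q)=\sum_{k=j}^{j+n-1}N_k\circ T_j^{k-j}+(w_{j+n}\circ T_j^n-w_j)$; the coboundary is $O(1)$ in $L^\infty$, and since $N_k\circ T_j^{k-j}$ form reverse martingale differences under $\tilde m_j$, orthogonality yields $\Var_{m_0}(S_{j,n}Q)\leq 2\sum_{k=j}^{j+n-1}\tilde m_k(N_k^2)+O(1)$. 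Setting $E_k:=\tilde m_k(M_k^2)$, the task reduces to showing $\sum_k\tilde m_k(N_k^2)\leq C(1+\sum_k E_k)$.

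Writing $N_k=z_k-\tilde\cL_k(z_k)\circ T_k$ with $z_k=\tilde Q_k+w_k$ and invoking Jensen's inequality for the Markov operator $\tilde\cL_k$, namely $\tilde m_{k+1}((\tilde\cL_k z_k)^2)\leq\tilde m_k(z_k^2)$, I obtain $\tilde m_k(N_k^2)\leq\tilde m_k(z_k^2)\leq 2(\tilde m_k(\tilde Q_k^2)+\tilde m_k(w_k^2))$. The first term is $\Var_{\tilde m_k}(M_k^2)\leq\|M_k\|_\infty^2 E_k\leq CE_k$. For the second I interpolate $L^\infty$- and $L^1$-bounds on each summand of $w_k$: Proposition~\ref{Tilde prop} applied to the mean-zero $\tilde Q_{k-m}$ (valid because $k\geq J$) gives $\|\tilde\cL_{k-m}^m\tilde Q_{k-m}\|_\infty\leq C\delta_0^m$, while the $L^1$-contraction $\|\tilde\cL_{k-m}^m g\|_{L^1(\tilde m_k)}\leq\|g\|_{L^1(\tilde m_{k-m})}$ yields $\|\tilde\cL_{k-m}^m\tilde Q_{k-m}\|_{L^1(\tilde m_k)}\leq 2E_{k-m}$. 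Combining via $\|\cdot\|_2^2\leq\|\cdot\|_\infty\|\cdot\|_1$ and Cauchy--Schwarz on the series defining $w_k$ produces $\tilde m_k(w_k^2)\leq C\sum_{m\geq 1}\delta_0^{m/2}E_{k-m}$.

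Summing over $k=j,\dots,j+n-1$ and swapping the order gives $\sum_k\tilde m_k(w_k^2)\leq C\sum_m\delta_0^{m/2}\sum_{k'}E_{k'}$ where $k'$ ranges over an $m$-shifted window; using the uniform bound $E_{k'}\leq\|M_{k'}\|_\infty^2=O(1)$ to absorb finitely many boundary terms and the summability $\sum_m m\delta_0^{m/2}<\infty$, I obtain $\sum_k\tilde m_k(w_k^2)\leq C(\sum_k E_k+1)$, and hence $\sum_k\tilde m_k(N_k^2)\leq C(\sum_k E_k+1)$. By orthogonality of the reverse martingale differences $M_k\circ T_j^{k-j}$, $\sum_k E_k=\|S_{j,n}M\|_{L^2(\tilde m_j)}^2$; and since $\bar S_{j,n}f-S_{j,n}M=u_{j+n}\circ T_j^n-u_j$ is uniformly bounded in $L^\infty$, the $L^2$ triangle inequality gives $\|S_{j,n}M\|_{L^2}^2\leq 2\Var(S_{j,n}f)+O(1)$, which closes the argument.

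The principal obstacle is the $L^2$ estimate of $w_k$: a naive $L^\infty$ bound would give $\tilde m_k(w_k^2)=O(1)$ per $k$ and a total of $O(n)$, which is too weak since $\Var(S_{j,n}f)$ may remain bounded (as in the degenerate regime of Theorem~\ref{Var them}). The interpolation between the exponential $L^\infty$-decay of $\tilde\cL$ on mean-zero data provided by Proposition~\ref{Tilde prop} and the $L^1$-contraction of the transfer operators is precisely what makes each summand simultaneously decay geometrically \emph{and} scale with $E_{k-m}$, which is the key ingredient for closing the bound in the regime where the variance need not grow.
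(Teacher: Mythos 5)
Your proposal is correct, but it takes a genuinely different route from the paper. The paper's proof is a direct covariance-decay computation (following the first part of the proof of \cite[Theorem 4.1]{CR}): set $g_k=Q_k-\tilde m_k(Q_k)$, expand $\text{Var}_{m_0}(S_{j,n}Q)$ into a double sum of covariances of $g_k\circ T_0^k$, bound each covariance via Proposition~\ref{Tilde prop} by $C\,\tilde m_k(|g_k|)\,\|g_\ell\|_{BV}\,\delta_0^{|\ell-k|}$, and sum the geometric series to get $\text{Var}_{m_0}(S_{j,n}Q)\lesssim \sum_k \tilde m_k(|g_k|)\leq 2\sum_k \tilde m_k(Q_k)=2\,\text{Var}_{m_0}(S_{j,n}M)$, which is then compared to $\text{Var}(S_{j,n}f)$ via the bounded coboundary. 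You instead apply a second martingale--coboundary decomposition to the sequence $Q_k=M_k^2$ and control the martingale part $N_k$ through the conditional-expectation (Pythagorean) bound $\tilde m_k(N_k^2)\leq\tilde m_k(z_k^2)$ followed by an $L^\infty$--$L^1$ interpolation estimate on $w_k$. Both proofs rest on the same mechanism --- the exponential decay of $\tilde\cL_j^n$ on mean-zero BV data from Proposition~\ref{Tilde prop} --- and both implicitly reduce to the same inequality $\sum_k\tilde m_k(|g_k|)\lesssim\sum_k\tilde m_k(Q_k)+O(1)$. The paper's packaging is shorter; yours is longer but makes explicit the parallel with Lemma~\ref{Mart lemm} and isolates why the bound scales with $\sum_k\tilde m_k(Q_k)$ rather than with $n$ (the $L^\infty$--$L^1$ interpolation, which you correctly identify as the key step in the regime where the variance need not grow). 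Your bookkeeping of the boundary indices $k'<J$ and the summability $\sum_m m\delta_0^{m/2}<\infty$ is also correct.
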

\begin{proof}

First, to simplify the notation let us assume that $j=J=0$. Denote 
$$
g_j=Q_{j}-\tilde m_j(Q_{j})=Q_j-\tilde m_0(Q_{j}\circ T_0^j)
$$ 
and $\DS S_{n}g=\sum_{j=0}^{n-1}g_j\circ T_0^j$. 
 The argument below is similar to 
 the first part of the proof of \cite[Theorem 4.1]{CR}, but we provide the details  to make our paper self contained.
 By Proposition \ref{Tilde prop} (using that $\tilde m_0=m_0$) we have
$$
\bbE_{m_0}[(S_ng)^2] \leq 2
\sum_{0\leq \ell <n}\sum_{0\leq k\leq \ell}\left| \tilde m_0\big((g_k\circ T_0^k)\cdot (g_\ell\circ T_0^\ell)\big)\right|
=2
\sum_{0\leq \ell <n}\sum_{0\leq k\leq \ell} \left| \tilde m_{k}(g_k \cdot \tilde\cL_{k}^{\ell-k}g_\ell)\right|
$$
$$
\leq C_0\sum_{0\leq \ell <n}\sum_{0\leq k\leq \ell}\tilde m_{k}(|g_k|)\| g_\ell\|_{BV}\del_0^{\ell-k}= C_0\sum_{0\leq k\leq n}\tilde m_{k}(|g_k|)\left(\sum_{k\leq \ell <n}\|g_\ell\|_{BV}\del_0^{\ell-k}\right)
$$
$$
\leq c_0\sum_{0\leq k\leq n}\tilde m_{k}(|g_k|)= c_0\sum_{0\leq k\leq n} m_{0}(|g_k\circ T_0^k|)\leq 2c_0\sum_{0\leq k\leq n} m_{0}(Q_k\circ T_0^k)
$$
for some constant $c_0$ (the first inequality of the last line   uses that $\DS \sup_j\|g_j\|_{BV}<\infty$). Here $\tilde \cL_j$ are the transfer operators
% with respect to the measures $\tilde m_j$ and $\tilde m_{j+1}$ as 
 defined by \eqref{DefTL}.
%at the beginning of \S \ref{pull}.
%To complete the proof, we use that
Observe that 
$
 \DS m_{0}(Q_{k}\circ T_0^k)\!\!=\!\!m_0((M_{k}\circ T_0^k)^2)
$
and, because of the orthogonality property,
$\DS
\sum_{0\leq k<n}m_0((M_{k}\circ T_0^k)^2)=\text{Var}_{m_0}(S_n M).
$
 Now the result follows from \eqref{M apprx}.
%Using also that $\DS \sup_n\|S_n f-S_nM\|_{L^\infty}<\infty$ we see that 
%$$
%\bbE_{m_0}[S_n^2]\leq 2c_0\sum_{0\leq k\leq n}m_{0}(Q_{k}\circ T_0^k)\leq 2(C+\text{Var}_{m_0}(S_n f))
%$$
%for some constant $C$, and the proof of the proposition is complete. 
\end{proof}

%\footnote{\color{red} This seems to be the same as Proposition 3.3.\color{purple} YH: You are right, I erased the statement that appeared here before}

%\begin{remark}

%Since $(T_j^k)_*m$ and $m$ are equivalent with Radon Nykodym derivative bounded above and below uniformly in $j$ and $k$, we can replace in the above
%$S_{j,n} f$ by $\tilde S_{j,n}f=\sum_{k=j}^{j+n-1}f_j\circ T_0^{k}$. %Indeed, $ S_{j,n}f=\tilde S_{j,n}f\circ T_{0}^{j}$.
%\end{remark}
\begin{proof}[Proof of Proposition \ref{Mom prop}]
It is enough to prove the theorem  for $j\geq J$, since to get the result for 
$0\leq j<J$ we can just take  $C$ large enough.

To simply the notation, we will only  prove the theorem when $j=J=0$, the proof when $j\geq J>0$ is similar. 

Notice that it is enough to prove the claim for $p=2^m$ for all $m$.  Moreover, by replacing $f_j$  with $f_j-\mu_0(f_j\circ T_0^j)$ we can and will assume that $\mu_0(S_{n})=0$ for all $n$.

We use induction on $m$.   For  $m=1$ the result  is trivial. 
Suppose that the statement is true for some $m\geq 1$. In order to estimate $\| S_nf\|_{2^{m+1}}$ we first use that\footnote{Where 
$\DS  S_n M=\sum_{j=0}^{n-1}M_j\circ T_0^j$, $M_j=M_j(f)$.} 
$$
\|S_nf\|_{2^{m+1}}\leq C+\|S_n M\|_{2^{m+1}}
$$
for some constant $C$ which depends only on the constants $C$ and $\delta$ from Theorem~\ref{RPF 0} and on $\DS \|f\|:=\sup_j\|f_j\|_{BV}$ 
 since
$\|S_nf-S_nM\|_{L^\infty}$ is bounded in $n$. So it suffices to show that
 \begin{equation}\label{en}
\|S_n M\|_{2^{m+1}}\leq C(1+\|S_n f\|_{2})
\end{equation}
 for an appropriate constant $C$.
 
 \eqref{en} follows
from a version of Burkholder's inequality for martingales (see \cite[Theorem 2.12]{PelB}). Let $\fd_1,....,\fd_n$ be a martingale difference with respect to a filtration $(\cF_j)_{j=1}^n$ on a probability space. Let $D_n=\fd_1+\fd_2+...+\fd_n$ and
$E_n=\fd_1^2+\fd_2^2+...+\fd_n^2$.
Then, for every $p\geq 2$ there are constants $c_p,C_p>0$ depending only on $p$ such that 
\begin{equation}\label{Burk}
 c_p\|E_n\|_{p/2}^{1/2}\leq \|D_n\|_{p}\leq C_p\|E_n\|_{p/2}^{1/2}. 
\end{equation}
Applying \eqref{Burk}  with   the (reverse) martingale difference $M_j\circ T_0^j$ we see that 
\begin{equation}\label{Burk1}
\|S_n M\|_{2^{m+1}}\leq a_m\|S_nQ\|_{2^m}^{1/2}
\end{equation}
where $S_n Q=S_{0,n}Q$ and
$a_m$ 
depends only on $m$. Applying the induction hypothesis with the sequence of functions $\tilde Q_j=Q_j-m_0(Q_j\circ T_0^j)$
 we see that there is a constant $R_m>0$ depending only of $m$ and the constants in the formulation of Proposition \ref{Mom prop} such that 
 $$
 \|S_n\tilde Q\|_{2^m}\leq R_m(1+\|S_n\tilde Q\|_{2}).
$$
 Since $\bbE[S_nQ]=\text{Var}(S_n M)$, 
 Proposition \ref{Thm 4.1'} 
 gives
$$
\|S_n Q\|_{2^m}\leq  \|S_n\tilde Q\|_{2^m}+\bbE[S_nQ]\leq R_m\left(1+C(1+\text{Var}(S_n
f))\right)+ \text{Var}(S_n M)
$$
$$
\leq R'_m(1+\text{Var}(S_n f))+\text{Var}(S_n M)
$$
for some other constant $R_m'$.
Using that  $\DS \sup_n\|S_nf-S_nM\|_{L^\infty}<\infty$ we see that 
$\DS
\text{Var}(S_n M)\leq C
\left(1+\text{Var}(S_n f)\right)
$
for some constant $C>0$. Thus, there is a constant $R_m''>0$ such that  
$$
 \|S_n\tilde Q\|_{2^m}\leq R_m''(1+\text{Var}(S_n f)).
$$
Now \eqref{en} follows from \eqref{Burk1}, completing the proof of the proposition.
%Using also \eqref{Burk1} we see that 
%$$\|S_n M\|_{2^{m+1}}\leq a_m\left(R_m''(1+\text{Var}(S_n))\right)^{1/2}\leq a_m\sqrt{R_m''}(1+\|S_n\|_2)$$
%and \eqref{en} follows.
\end{proof}

\section{Logarithmic growth conditions}\label{Sec Log}
In this section we prove the following result.
\begin{proposition}\label{Growth Prop}
Let $f_j:X_j\to\bbR$ be a sequence of functions such that 
$\DS \sup_j\|f_j\|_{BV}\!\!<\!\!\infty$ and $\tilde m_j(f_j)=0$.
Suppose $\sig_n^2=\text{Var}_{m_0}(S_n)\to\infty$.
Let $\Lambda_n(t)=\ln m_0(e^{it S_n f/\sig_n})$. Then for every $k\geq3$ there are constants $\del_k>0$ and $C_k>0$ such that 
$$
\sup_{t\in[-\del_k\sig_n,\del_k\sig_n]}|\Lambda^{(k)}_n(t)|\leq C_k\sig_n^{-(k-2)}
$$
where $\Lambda_n^{(k)}$ is the $k$-th derivative of the function $\Lambda_n$.
\end{proposition}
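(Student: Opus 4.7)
The plan is to combine the complex sequential Ruelle-Perron-Frobenius theorem (Theorem \ref{CMPLX RPF}), the martingale-coboundary decomposition (Lemma \ref{Mart lemm}) and the moment estimates (Proposition \ref{Mom prop}), and to pass from pointwise bounds on an analytic continuation of the log-characteristic function to derivative bounds via Cauchy's integral formula. First, I would use the identity $m_0(e^{z S_n}) = \tilde m_n(\tilde\cL_{0,z}^n \mathbf{1})$, where $\tilde\cL_{j,z}g := \tilde\cL_j(e^{z f_j} g)$ and $\tilde\cL_j$ is the normalized transfer operator from Section \ref{pull} (so $\tilde\cL_j \mathbf{1} = \mathbf{1}$). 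Theorem \ref{CMPLX RPF} provides on a complex disk $|z|\leq \delta_0$ an analytic spectral decomposition
\begin{equation*}
\tilde\cL_{0,z}^n \mathbf{1} = \Big(\prod_{j=0}^{n-1}\lambda_{j,z}\Big)\,\nu_{0,z}(\mathbf{1})\,h_{n,z} + R_n(z),\qquad \|R_n(z)\|_{BV}\leq C\rho^n,
\end{equation*}
with $\lambda_{j,0}=1$. Taking logarithms and substituting $z = it/\sig_n$ yields $\Lambda_n(t) = \Pi_n(it/\sig_n) + E_n(it/\sig_n)$, where $\Pi_n(z) := \sum_{j=0}^{n-1}\ln \lambda_{j,z}$ is analytic on the disk and $E_n$ is analytic and uniformly bounded on a fixed complex neighborhood of the origin.

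Since the derivatives of $E_n(it/\sig_n)$ contribute at most $O(\sig_n^{-k})\subset O(\sig_n^{-(k-2)})$ to $\Lambda_n^{(k)}(t)$ for $k\geq 2$, it suffices to prove $|\Pi_n^{(k)}(z)|\leq C_k(1+\sig_n^2)$ on some disk $|z|\leq \delta$. By Cauchy's integral formula applied on concentric circles of radii $\delta$ and $2\delta$, this reduces in turn to the pointwise bound $|\Pi_n(z)|\leq C(1+\sig_n^2)$ on the larger disk $|z|\leq 2\delta$. Once this is in hand, the chain rule gives
\begin{equation*}
|\Lambda_n^{(k)}(t)| = \sig_n^{-k}|\Pi_n^{(k)}(it/\sig_n)+E_n^{(k)}(it/\sig_n)| \leq C_k\sig_n^{-(k-2)}
\end{equation*}
uniformly on $|t|\leq \delta\sig_n =: \delta_k\sig_n$, which is exactly the claim of Proposition \ref{Growth Prop}.

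To establish the pointwise bound on $\Pi_n$ I would apply the martingale-coboundary decomposition of Lemma \ref{Mart lemm}, writing $S_n = S_n M + (u_n\circ T_0^n - u_0)$, where $S_n M = \sum_{j=0}^{n-1}M_j\circ T_0^j$ is a reverse martingale sum on $(X_0,\cB_0,m_0)$ with respect to the reverse filtration $\cA_j = (T_0^j)^{-1}\cB_j$, the increments $M_j$ are uniformly bounded, and $|S_n - S_n M|\leq 2\sup_j\|u_j\|_\infty$. The crux is then a sub-Gaussian type bound
\begin{equation*}
|m_0(e^{z S_n})| \leq \exp\bigl(C|z|^2\sig_n^2 + C'|z|\bigr)
\end{equation*}
uniformly for $z$ in a complex disk $|z|\leq 2\delta$. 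Combined with the spectral representation and the observation that $\nu_{0,z}(\mathbf{1})\tilde m_n(h_{n,z})$ stays uniformly bounded and bounded away from zero on the disk (it equals $1$ at $z=0$ and is analytic, uniformly in $n$), this yields the required bound on $|\Pi_n(z)|$.

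The main technical obstacle will be the uniform sub-Gaussian estimate on a complex disk of fixed radius. Proposition \ref{Mom prop} only asserts $\|S_n\|_{L^p}\leq C_p(1+\sig_n)$, i.e., uniformly bounded $L^p$-moments of the standardized sum, which is a priori weaker than sub-Gaussianity and on its own suffices only for $|z|\lesssim 1/\sig_n$ rather than for a fixed disk. To upgrade, I would follow the strategy of \cite{DH} and exploit the martingale structure of $S_nM$: iteratively applying a Bennett-type conditional inequality $E[e^{zM_j\circ T_0^j}\mid\cA_{j+1}]\leq \exp\bigl(Cz^2 E[M_j^2\circ T_0^j\mid\cA_{j+1}]\bigr)$ for $|z|$ small gives $|E[e^{zS_nM}]|\leq E[\exp(Cz^2 \langle S_nM\rangle_n)]$, where $\langle S_nM\rangle_n = \sum_{j=0}^{n-1}E[M_j^2\circ T_0^j\mid\cA_{j+1}]$; then the variance bound $\text{Var}_{m_0}(\langle S_nM\rangle_n)\leq C(1+\sig_n^2)$ of Proposition \ref{Thm 4.1'}, together with iterated applications of Proposition \ref{Mom prop} to the centered sequence $M_j^2 - m_0(M_j^2\circ T_0^j)$, yield the required exponential moment bound on $\langle S_nM\rangle_n$ and hence on $S_nM$.
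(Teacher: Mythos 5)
Your setup and your reduction are the same as the paper's: pass to the pulled-back operators $\tilde\cL_{j,z}$, use the complex RPF theorem to split $\ln m_0(e^{zS_n})=\Pi_{0,n}(z)+O(1)$ (this is Lemma~\ref{LLL} / Corollary~\ref{Cor z} in the paper), and observe that after the substitution $z=it/\sig_n$ and the chain rule it suffices to bound $\Pi_{0,n}^{(k)}$ by $O(\sig_n^2)$ on a neighborhood of the origin independent of $n$. Where you diverge — and where there is a genuine gap — is in how you propose to get that bound.

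You want to prove a sub-Gaussian inequality $|m_0(e^{zS_n})|\leq\exp(C|z|^2\sig_n^2+C'|z|)$ on a complex disk of fixed radius and then invoke Cauchy. The iterated Bennett step $|E[e^{zS_nM}]|\leq E[\exp(C|z|^2\langle S_nM\rangle_n)]$ is fine for bounded reverse-martingale increments, but it shifts the burden to an \emph{exponential moment} of the (conditional) quadratic variation $\langle S_nM\rangle_n$, which is again an $n$-term sum of the same type you started from. To close the loop you would need either to iterate indefinitely with decaying constants, or to know that the constants $C_p$ in Proposition~\ref{Mom prop} grow at most like $p^{p/2}$ (so that $\sum_p \frac{c^p C_p(1+\sig_n)^p}{p!}$ converges with the right $n$-dependence). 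Proposition~\ref{Mom prop} as proved (Burkholder plus induction on dyadic $p$) gives no such control on the $p$-dependence of $C_p$, and Proposition~\ref{Thm 4.1'} only bounds the variance of $S_{j,n}Q$, not an exponential moment, and moreover concerns the unconditional quadratic variation $\sum M_j^2\circ T_0^j$ rather than $\sum E[M_j^2\circ T_0^j\mid\cA_{j+1}]$. So the inequality you need is plausible but is not within reach of the stated tools, and "iterated applications of Proposition~\ref{Mom prop}" does not resolve the circularity.

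The paper sidesteps exponential moments entirely by a block decomposition, which is in fact what the cited reference \cite{DH} does. One splits $\{0,\dots,n-1\}$ into $k_n$ consecutive blocks $I_1,\dots,I_{k_n}$ so that each block sum $S_{I_j}$ has variance in $[B,2B]$ for a large fixed $B$; the martingale–coboundary decomposition shows $k_n\asymp\sig_n^2$. On each block one applies \cite[Lemma 43]{DH}, which controls $\tilde\Lambda_{I_j}^{(k)}(it)$ for $|t|\leq r(B)$ purely in terms of $E[|S_{I_j}|^k]$, and the latter is uniformly bounded by Proposition~\ref{Mom prop} since the block variances are bounded. Corollary~\ref{Cor z} then transfers this to $|\Pi_{I_j}^{(k)}(it)|\leq D_k$, and summing over the $\asymp\sig_n^2$ blocks gives $|\Pi_{0,n}^{(k)}(it)|\leq C\sig_n^2$. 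This route needs only finitely many $L^p$ moments per block and never a uniform exponential moment, which is exactly why it works without any growth assumption on $\sig_n^2$. I would recommend replacing the sub-Gaussian step with this block argument.
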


\noindent
The proof %of Proposition \ref{Growth Prop} 
uses
the complex Ruelle-Perron-Frobinuous theorem,  see Theorem \ref{CMPLX RPF} below.

\subsection{Sequential   complex RPF theorem}\label{RPF sec}

Take a sequence of real valued functions $f_j\in B_j$ and $\DS \|f\|=\sup_j\|f_j\|_{BV}<\infty$. Consider 
the operators 
%$\cL_{j,z}$ given by 
$\cL_{j,z}(h)\!\!=\!\!\cL_j(he^{z f_j})$  (where $z\in\bbC$). We need the following result.

\begin{lemma}
\,

(i)  $\DS \sup_j\|\cL_{j,z}\|_{BV}\leq C(z)$ for some continuous function $C(z)$ (of exponential order in $|z|$).
\vskip0.1cm

\noindent
(ii) The map $z\!\!\to\!\!\cL_{j,z}$ is analytic  and its $k$-order derivatives $\cL_{j,t,k}$ satisfy 
$\DS
\sup_j\|\cL_{j,t,k}\|_{BV}\leq C_k(z)
$
for some continuous function  $C_k(\cdot)$  (of exponential order in $|z|$).
\end{lemma}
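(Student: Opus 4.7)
I would factor $\cL_{j,z}$ as the composition $\cL_j \circ M_{j,z}$, where $M_{j,z}:B_j \to B_j$ is the multiplication operator $h\mapsto h\, e^{zf_j}$, and then reduce both (i) and (ii) to verifying that $z\mapsto M_{j,z}$ is an entire operator-valued map on $(B_j,\|\cdot\|_{BV})$ whose operator norm is of exponential order in $|z|$, uniformly in $j$. Once this is established, composing with $\cL_j$ and invoking (LY1) immediately yields the uniform bound in (i), the analyticity of $z\mapsto \cL_{j,z}$, and an analogous bound on every derivative.

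The central step is to upgrade (V3) and (V5) into a uniform Banach algebra estimate. From (V3) one first gets $\|h\|_\infty \le (1+C_{\mathrm{var}})\|h\|_{BV}$; plugging this into (V5) and adding the trivial bound $\|\varphi\psi\|_1 \le \|\varphi\|_\infty\|\psi\|_1$ yields $\|\varphi\psi\|_{BV,j} \le A\,\|\varphi\|_{BV,j}\,\|\psi\|_{BV,j}$ with a constant $A$ depending only on $C$ and $C_{\mathrm{var}}$ (and independent of $j$). Setting $\|f\|:=\sup_j\|f_j\|_{BV}<\infty$, this gives $\|f_j^n\|_{BV,j}\le (A\|f\|)^n$, so the partial sums of $\sum_n (zf_j)^n/n!$ form a Cauchy sequence in the complete space $(B_j,\|\cdot\|_{BV})$. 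Since $\|\cdot\|_1\le \|\cdot\|_{BV}$, the $BV$-limit must coincide with $e^{zf_j}$ almost everywhere, and we obtain the uniform estimate $\|e^{zf_j}\|_{BV,j}\le \exp(A\|f\||z|)$.

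Applying the Banach algebra inequality once more gives $\|M_{j,z}\|_{BV\to BV}\le A\exp(A\|f\||z|)$, and the termwise bounds show that $M_{j,z}=\sum_n z^n M_{j,n}/n!$, with $M_{j,n}(h)=h f_j^n$, is a norm-convergent power series of operators. Hence $M_{j,z}$, and therefore $\cL_{j,z}=\cL_j\circ M_{j,z}$, is analytic in $z$; combined with (LY1) this proves (i) with $C(z) = (\sup_j \|\cL_j\|_{BV})\cdot A\exp(A\|f\||z|)$. Differentiating the power series term by term identifies the $k$-th derivative as $\cL_{j,z,k}(h)=\cL_j(h f_j^k e^{zf_j})$, and applying the Banach algebra estimate to the three factors $h$, $f_j^k$, $e^{zf_j}$ (using $\|f_j^k\|_{BV,j}\le (A\|f\|)^k$ and the bound on $\|e^{zf_j}\|_{BV,j}$) yields the uniform bound $\sup_j\|\cL_{j,z,k}\|_{BV}\le C_k(z)$ claimed in (ii).

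The only step that requires genuine work is the uniform Banach algebra inequality and the resulting uniform bound on $\|e^{zf_j}\|_{BV,j}$; everything else is bookkeeping with exponential series. I expect this Banach algebra estimate to be the main obstacle, though it is ultimately a routine consequence of (V3) and (V5) once one tracks the constants carefully and exploits the fact that $B_j$ is complete under $\|\cdot\|_{BV}$.
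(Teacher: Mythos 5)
Your proposal is correct and follows essentially the same route as the paper: the paper likewise bounds $\|e^{zf_j}\|_{BV}$ by the exponential series using $\|f_j^k\|_{BV}\leq C^k\|f_j\|_{BV}^k$ (which comes from (V5), i.e.\ your uniform Banach-algebra estimate), deduces exponential-order growth, obtains analyticity from the analyticity of $z\mapsto e^{zf_j}$, and handles the derivatives the same way. Your write-up merely makes explicit what the paper leaves implicit — the factorization $\cL_{j,z}=\cL_j\circ M_{j,z}$, the role of (LY1), and the identification of the $BV$-limit of the partial sums with $e^{zf_j}$.
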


\begin{proof}
Since
$\DS
\|e^{zf_j}\|_{BV}\leq \sum_{k}\frac{|z|^k\|f_j^k\|_{BV}}{k!}
$
and by (V5), $\|f_j^k\|_{BV}\leq C^k\|f_j\|_{BV}^k$ for some constant $C$, we get  
$\DS \sup_j\|e^{zf_j}\|_{BV}\leq e^{c|z|}
$
for some constant $c>0$.

The analyticity of the operators follows from the analytiticty of the map $z\to e^{zf_j}$, and the estimate on the norms of the derivatives is obtained similarly to part (i).
\end{proof}

The following result is a consequence of the more general perturbation theorem 
(Theorem~\ref{PertThm} in  Appendix \ref{AppPert}).

\begin{theorem}\label{CMPLX RPF}
There is a constant $r_0>0$ such that for every complex number with $|z|\leq r_0$
there are sequences $\la(z)=(\la_j(z)), \la_j(z)\in\bbC\setminus\{0\}$, 
$h^{(z)}=(h_j^{(z)}), h_j^{(z)}\in B_j$, $m^{(z)}=(m_j^{(z)}), m_j^{(z)}\in B_j^*$
with the following properties.
\vskip0.1cm
(i) The maps $z\to \la_j(z)$, $z\to h_j^{(z)}$ and $z\to m_j^{(z)}$ are analytic in $z$ 
 and their norms\footnote{Here if $\gamma$ is a map from $\{|z|\leq r_0\}$ to a Banach space
we take $\DS \|\gamma\|=\sup_{|z|\leq r_0} \|\gamma(z)\|.$}
  are bounded uniformly in $j$.
\vskip0.1cm

(ii) We have 
$\la_j(0)=1$, $m_j^{(0)}=m_j$, $h_j^{(0)}=h_j$,
 $m_j^{(z)}(\textbf{1})=m_j^{(z)}(h_j^{(z)})=1$ and
$$
\cL_{j,z}h_j^{(z)}=\la_j(z)h_{j+1}^{(z)}, \quad (\cL_{j,z})^*m_{j+1}^{(z)}=\la_j(z)m_{j}^{(z)}.
$$
\vskip0.1cm

(iii) There are $\del_3\in(0,1)$ and $C_3>0$ such that 
$$
\left\|\cL_{j,z}^n-\la_{j,n}(z)m_j^{(z)}(\cdot)h_{j+n}^{(z)}\right\|\leq C_3\del_3^n
$$
where 
$
\cL_{j,z}^n=\cL_{j+n-1,z}\circ\cdots\circ\cL_{j+1,z}\circ\cL_{j,z}
$
and $\la_{j,n}(z)=\la_{j+n-1}(z)\cdots\la_{j+1}(z)\la_j(z)$.
 \end{theorem}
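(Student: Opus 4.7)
The plan is to obtain Theorem \ref{CMPLX RPF} as a direct application of the general sequential perturbation theorem (Theorem \ref{PertThm} in Appendix \ref{AppPert}) to the analytic family $z\mapsto \cL_{j,z}$, regarding it as a perturbation of the unperturbed sequence $\cL_{j,0}=\cL_j$. At $z=0$ the real Ruelle--Perron--Frobenius theorem (Theorem \ref{RPF 0}) supplies the needed spectral data: eigenvalue $1$, eigenfunction $h_j$ (uniformly bounded in $B_j$ and uniformly bounded away from $0$), eigenmeasure $m_j$, and, crucially, the uniform exponential spectral gap $\|\cL_j^n(\cdot)-m_j(\cdot)h_{j+n}\|_{BV}\le C\delta^n$. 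The preceding lemma gives the remaining input: the maps $z\mapsto\cL_{j,z}$ are analytic with all derivatives of norm bounded by a continuous, $j$-independent function of $|z|$.

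The first step is to verify the hypotheses of Theorem \ref{PertThm}. These comprise (a) uniform spectral gap of $\cL_{j,0}$ with a rank-one dominating projection $\Pi_j(\cdot)=m_j(\cdot)h_{j+1}$, which is exactly the content of Theorem \ref{RPF 0}; (b) uniform boundedness and analyticity of the perturbation, supplied by the lemma; and (c) a norm-continuity estimate $\sup_j\|\cL_{j,z}-\cL_j\|_{BV}\to0$ as $z\to0$, which follows from $\sup_j\|\cL_{j,z}-\cL_j\|_{BV}\le |z|\sup_j\|\cL_{j,\,\cdot\,,1}\|_{BV}\cdot e^{c|z|}$ via the first-derivative bound. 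Choosing $r_0$ small enough that this uniform perturbation is dominated by the spectral gap at $z=0$, one obtains that for $|z|\le r_0$ the operators $\cL_{j,z}^n$ retain a rank-one dominating part. The construction of $\la_j(z)$, $h_j^{(z)}$, $m_j^{(z)}$ then proceeds via the sequential analogue of spectral projection: define $m_j^{(z)}$ as the unique functional (after normalization) on which the sequence of iterated adjoints $(\cL_{j,z})^*\cdots(\cL_{j+n-1,z})^*$ contracts when quotiented by its norm; define $h_j^{(z)}$ as the limit of the forward iterates of $\textbf{1}$ normalized so that $m_j^{(z)}(h_j^{(z)})=1$; and define $\la_j(z)$ by the equivariance $\cL_{j,z}h_j^{(z)}=\la_j(z)h_{j+1}^{(z)}$. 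The normalizations $\la_j(0)=1$, $m_j^{(0)}=m_j$, $h_j^{(0)}=h_j$ and $m_j^{(z)}(\textbf{1})=1$ are automatic from this construction and Theorem \ref{RPF 0}.

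Once existence is set up, analyticity in $z$ follows from the fact that each finite iterate $\cL_{j,z}^n$ is analytic in $z$ with uniform-in-$j$ norm bounds, together with uniform convergence of the limits defining $h_j^{(z)}$ and $m_j^{(z)}$, the latter being a consequence of the persisting spectral gap (item (iii) of the theorem); uniform limits of analytic families are analytic by Morera, so the $j$-independent radius $r_0$ and constants $C_3,\delta_3$ can be extracted. The main obstacle I foresee is item (iii): unlike in the autonomous case, one cannot invoke a single resolvent identity, and the spectral gap must be propagated along the non-autonomous orbit. The argument, following the lines laid out in Appendix \ref{AppPert} in the spirit of \cite{HH}, is to split $\cL_{j,z}^n=\la_{j,n}(z)m_j^{(z)}(\cdot)h_{j+n}^{(z)}+R_{j,z}^n$ using the projections, verify that $R_{j,z}$ acts contractingly on the complementary invariant subspace $\{\varphi:m_j^{(z)}(\varphi)=0\}$ with a contraction rate inherited from $\delta$ (with a small loss due to the perturbation), and then iterate. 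The uniform-in-$j$ quantitative control on the perturbation and the hyperplanes $\ker m_j^{(z)}$ is exactly what Theorem \ref{PertThm} is designed to package, and this is where the simplification claimed over \cite{HK,Nonlin} is most useful.
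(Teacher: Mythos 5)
Your proposal matches the paper's own (very short) argument: Theorem~\ref{CMPLX RPF} is obtained by verifying Assumption~\ref{Ass} for the analytic family $\cL_{j,z}$ --- items (i),(ii) via Theorem~\ref{RPF 0} with $A_j=\cL_j$, $\la_j=1$, $\nu_j=m_j$, and item (iii) via the lemma immediately preceding the statement --- and then invoking Theorem~\ref{PertThm}. Your speculative sketch of how the triplets inside Theorem~\ref{PertThm} might be built (as iterated forward/backward limits of $\textbf{1}$) differs from the appendix's actual argument, which constructs them by solving fixed-point equations via the implicit function theorem in the Banach spaces $\textbf{H}$ and $\textbf{E}$; but since you cite Theorem~\ref{PertThm} as a black box, this does not affect the correctness of your derivation.
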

 
\subsection{Proof of Proposition \ref{Growth Prop}}

Let $\la_j(z)$ be like in Theorem \ref{CMPLX RPF}, and
let $\Pi_j(z)$ be an analytic branch of $\ln \la_j(z)$ so that $\Pi_j(0)=1$.
\begin{lemma}\label{LLL}
There are $\ve_0>0$ and $A_0>0$ such that for every complex number $z$ with $|z|\leq \ve_0$ and all $j,n$ with $j+n\geq J$ we have
$$
\left|\ln \bbE_{m_0}[e^{z(S_{n+j} f-S_{j}f)}]-\sum_{k=j}^{j+n-1}\Pi_j(z)\right|\leq A_0.
$$
\end{lemma}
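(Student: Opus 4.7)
The plan is to express $E_{j,n}(z):=m_0(e^{z(S_{j+n}f-S_jf)})$ in terms of a twisted transfer operator and then apply the complex RPF theorem. Since $f_k\circ T_0^k=(f_k\circ T_j^{k-j})\circ T_0^j$ for $k\geq j$, iterating the duality \eqref{Duality} exactly $j+n$ times (with the twist $e^{zf_k}$ inserted only at steps $k\geq j$) gives the identity
\[
E_{j,n}(z)=m_{j+n}\bigl(\cL_{j,z}^n(\cL_0^j\mathbf{1})\bigr).
\]
Note that $\|\cL_0^j\mathbf{1}\|_{BV}$ is uniformly bounded in $j$ by Lemma~\ref{Bound1 lemma} and \eqref{LY Iter}.

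Next, I would apply Theorem~\ref{CMPLX RPF}(iii) to decompose $\cL_{j,z}^n=\la_{j,n}(z)\,m_j^{(z)}(\cdot)\,h_{j+n}^{(z)}+R_{j,n,z}$, where $\|R_{j,n,z}\|\leq C_3\delta_3^n$ uniformly for $|z|\leq r_0$. Substituting and integrating against $m_{j+n}$ produces a factorization $E_{j,n}(z)=\la_{j,n}(z)\,B_{j,n}(z)$, with
\[
B_{j,n}(z)=m_j^{(z)}(\cL_0^j\mathbf{1})\,m_{j+n}(h_{j+n}^{(z)})+\la_{j,n}(z)^{-1}\,m_{j+n}\bigl(R_{j,n,z}(\cL_0^j\mathbf{1})\bigr).
\]
The key step is showing that for a suitably small $\ve_0$, $B_{j,n}(z)$ stays inside a small disk around $1$ uniformly in $j,n$ (with $j+n\geq J$) and $|z|\leq\ve_0$. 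Using Theorem~\ref{CMPLX RPF}(i), uniformly in $j$ we have $|\la_k(z)-1|\leq K|z|$, $|m_j^{(z)}(g)-m_j(g)|\leq K|z|\|g\|_{BV}$, and an analogous bound for $h_{j+n}^{(z)}$. So the first summand in $B_{j,n}(z)$ equals $1+O(\ve_0)$. Shrinking $\ve_0$ so that $\delta_3/(1-K\ve_0)<(1+\delta_3)/2=:\rho<1$, the second summand is bounded by $C_3 M\rho^n$, which is $\leq 1/4$ once $n\geq N_0$ for a fixed $N_0$. For the finitely many remaining values $n<N_0$ (and arbitrary $j$ with $j+n\geq J$), the direct estimate $|e^{z(S_{j+n}f-S_jf)}-1|\leq e^{\ve_0 N_0\|f\|_\infty}-1$ together with $|\la_{j,n}(z)-1|\leq(1+K\ve_0)^{N_0}-1$ shows both numerator and denominator of $B_{j,n}(z)=E_{j,n}(z)/\la_{j,n}(z)$ are close to $1$ after further shrinking $\ve_0$. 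In all cases one obtains $|B_{j,n}(z)-1|\leq 1/2$.

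Finally I would conclude via a branch-of-logarithm argument. Since $|B_{j,n}(z)|\geq 1/2$ and $|\la_{j,n}(z)|\geq (1-K\ve_0)^n>0$, the function $E_{j,n}(z)$ is non-vanishing on $\{|z|\leq\ve_0\}$. Picking the analytic branches $\log E_{j,n}(z)$ and $\Pi_k(z)$ that vanish at $z=0$, set $L_{j,n}(z):=\log E_{j,n}(z)-\sum_{k=j}^{j+n-1}\Pi_k(z)$. Then $L_{j,n}$ is analytic on $\{|z|\leq\ve_0\}$, $L_{j,n}(0)=0$, and $\exp L_{j,n}(z)=B_{j,n}(z)$. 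Since the principal branch of $\log$ on the disk $\{|w-1|\leq 1/2\}$ is bounded by some constant and agrees with $L_{j,n}$ at $z=0$, continuity forces $L_{j,n}(z)$ to coincide with $\log B_{j,n}(z)$ throughout the disk. Hence $|L_{j,n}(z)|\leq A_0$ for an absolute constant $A_0$, proving the lemma.

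The main obstacle is the case of moderate $n$: the RPF remainder bound $C_3\delta_3^n$ is not genuinely small there, so the closeness of $B_{j,n}(z)$ to $1$ cannot be extracted from the spectral decomposition alone. Reconciling this with large $n$ requires choosing $\ve_0$ small enough to beat $\delta_3$ via the lower bound on $|\la_{j,n}(z)|$, and simultaneously small enough that the direct Taylor expansion of $E_{j,n}(z)$ and $\la_{j,n}(z)$ controls the ratio for $n<N_0$. A secondary subtlety is ensuring the log-identity $\log E_{j,n}=\sum\Pi_k+\log B_{j,n}$ holds with the stated branch choices; this requires that $B_{j,n}(z)$ traces out a path in a simply connected region of $\bbC\setminus\{0\}$, which is why closeness to $1$ (rather than merely being bounded away from $0$) matters.
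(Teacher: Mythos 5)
Your proof is correct and uses the same core machinery as the paper (express $E_{j,n}(z)=m_0(e^{z(S_{j+n}f-S_jf)})$ through the twisted transfer operators, apply Theorem~\ref{CMPLX RPF}(iii) to factor out $\la_{j,n}(z)$, then bound the multiplicative correction and take a well-defined branch of logarithm). The technical route differs, though, in two respects. First, you work directly with $\cL_{j,z}^n$ applied to $\cL_0^j\textbf{1}$, whereas the paper passes through the pulled-back operators $\tilde\cL_{j,z}^n$, which requires the lower bound $\text{essinf}(\cL_0^{j+n}\textbf{1})\geq c_0$ and hence the restriction $j+n\geq J$; your identity sidesteps that and actually yields the bound for all $j,n$. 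Second, and more substantively, the paper handles the remainder $O(\del_3^n)$ by the maximal modulus trick (using $G(0)=0$ to upgrade it to $zO(\del_3^n)$ and then absorbing $1/(1-K\ve_0)^n$), so the error vanishes at $z=0$ uniformly in $n$; you instead split into $n\geq N_0$, where the raw geometric decay suffices after choosing $\ve_0$ so that $\del_3/(1-K\ve_0)<\rho<1$, and $n<N_0$, where a direct Taylor estimate on both $E_{j,n}(z)$ and $\la_{j,n}(z)$ does the job. Both are valid; the paper's argument is slicker, yours is more elementary and a bit more transparent about why the lemma holds for moderate $n$. One small bookkeeping point: your claimed bound $|B_{j,n}(z)-1|\leq 1/2$ requires tracking the constants a little more tightly than stated (e.g.\ with both numerator and denominator within $1/4$ of $1$ the quotient is only within $2/3$), but the constants can be tightened so this is not a real gap.
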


\begin{proof}
Since $m_0=\tilde m_0$ we have
$\DS
 \bbE_{m_0 }[e^{z(S_{n+j} f-S_{j}f)}]=\tilde m_{j+n}(\tilde\cL_{j,z}^n\textbf{1})
$
where 
\begin{equation}
\label{TildeL}
\tilde\cL_{j,z}^n(g)=\tilde\cL_j^n(e^{zS_{j,n}f} g)=\frac{\cL_{j,z}^n(g\cL_0^j\textbf{1})}{\cL_0^{j+n}\textbf{1}}.
\end{equation}
Using 
 Theorem \ref{CMPLX RPF}(iii),  and the facts
 that $m_j(\cL_0^j\textbf{1})=1$, $\text{essinf}(\cL_0^{j+n} \textbf{1})\geq c_0>0$ and 
 $\DS \sup_k\|\cL_0^k\|_{BV}<\infty$ we get
$$   
\tilde m_{j+n}(\tilde\cL_{j,z}^n\textbf{1})=\la_{j,n}(z)U_{j,n}(z)+
O(\del_3^n).
$$
where $\del_3\in(0,1)$ and 
$\DS
U_{j,n}(z)=m_j^{(z)}(\cL_0^j\textbf{1})\tilde m_{j+n}(H_{j+n,z}),$\hskip2mm
$\DS H_{j+n,z}=\frac{h_{j+n}^{(z)}}{\cL_0^{j+n}\textbf{1}}.
$
Now, let $G(z)=\tilde m_{j+n}(\tilde\cL_{j,z}^n\textbf{1})-\la_{j,n}(z)U_{j,n}(z)$. Then $G$ is is analytic. 
Since
$$
\tilde m_{j+n}(\tilde\cL_{j,0}^n\textbf{1})=\la_{j,n}(0)U_{j,n}(0)=1
$$
we have $G(0)\!\!=\!\!0$. Applying the maximal modulus principle to the function $\frac{G(z)}{z}$ we get
$$   
\tilde m_{j+n}(\tilde\cL_{j,z}^n\textbf{1})=\la_{j,n}(z)U_{j,n}(z)+
zO(\del_3^n)
$$
namely, the error term is $\DS O\left(z\del_3^n\right)$ and not only $O(\del_3^n)$.
Next
$$
\tilde m_{j+n}(H_{j+n,z})=m_{j+n}(H_{j+n,z}\cL_0^{j+n}\textbf{1})=m_{j+n}(h_{j+n}^{(z)}).
$$
Since $\la_j(0)=1$ and $\la_j(z)$ are uniformly bounded and
analytic
in $z$, we can write the above in the following form
$$
\tilde m_{j+n}(\tilde \cL_{j,z}^n\textbf{1})
=\la_{j,n}(z)\left(U_{j,n}(z)+zO(\del_4^n)\right),\quad \del_4\in(0,1).
$$
The functions $U_{j,n}(z)$ are analytic, uniformly bounded in $z$ and they take the value one at $z=0$. Thus, we can take the logarithm of both sides to conclude that, if $|z|$ is small enough then
$$
\ln \bbE_{m_0 }[e^{z(S_{n+j} f-S_{j}f)}]=\ln \la_{j,n}(z)+V_{j,n}(z)
$$
where $V_{j,n}(z)=\ln U_{j,n}(z)=\ln (1+O(z)+zO(\del_4^n))=O(1)$.
\end{proof}

Using the Cauchy integral formula we derive the following result.
\begin{corollary}\label{Cor z}
Let $\tilde\Lambda_{j,n}(z)=\ln  \bbE_{m_0}[e^{z(S_{j+n} f-S_j f)}]$.
Then there exists  $\ve>0$ such that  for every $s\geq 1$ there 
is a constant $C_s>0$ such that for every complex $z$ with $|z|\leq \ve$ and all $j,n$ with $j+n\geq J$ we have
 \begin{equation}\label{Cor z est}
 \left|\tilde\Lambda_{j,n}^{(s)}(z)-\sum_{k=j}^{j+n-1}\Pi_k^{(s)}(z)\right|\leq C_s
 \end{equation}
 where $g^{(s)}$ denotes the $s$-th derivative of a function $g$.

\end{corollary}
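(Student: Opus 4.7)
The plan is to deduce the corollary directly from Lemma \ref{LLL} via Cauchy's estimates, exploiting analyticity of both sides in $z$.

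First I would set
$$
g_{j,n}(z):=\tilde\Lambda_{j,n}(z)-\sum_{k=j}^{j+n-1}\Pi_k(z).
$$
By the discussion before Lemma \ref{LLL}, the function $\Pi_k(z)=\ln\la_k(z)$ is a well-defined analytic branch of $\ln$ in a neighborhood of $0$ (uniformly in $k$), since $\la_k(0)=1$ and $\la_k$ is analytic and uniformly bounded and bounded away from $0$ for $|z|\le r_0$ by Theorem~\ref{CMPLX RPF}(i). On the other hand, the moment generating function $\bbE_{m_0}[e^{z(S_{n+j}f-S_j f)}]$ is entire in $z$ because $S_{n+j}f-S_j f$ is bounded uniformly on $X_0$, and equals $1$ at $z=0$; hence $\tilde\Lambda_{j,n}(z)=\ln\bbE_{m_0}[e^{z(S_{n+j}f-S_j f)}]$ is analytic in a fixed disk $|z|\le \ve_0$ (uniformly in $j,n$, for $j+n\ge J$). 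So $g_{j,n}$ is analytic on $|z|\le \ve_0$.

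Next, Lemma~\ref{LLL} gives the uniform bound $|g_{j,n}(z)|\le A_0$ for all $|z|\le \ve_0$ and all $j,n$ with $j+n\ge J$. At this point the result is immediate from Cauchy's formula. Fix $\ve=\ve_0/2$ and $r=\ve_0/2$. Then for any $z$ with $|z|\le \ve$, the circle $\{|w-z|=r\}$ lies inside $\{|w|\le \ve_0\}$, so
$$
g_{j,n}^{(s)}(z)=\frac{s!}{2\pi i}\oint_{|w-z|=r}\frac{g_{j,n}(w)}{(w-z)^{s+1}}\,dw,
$$
and the standard estimate yields
$$
|g_{j,n}^{(s)}(z)|\le \frac{s!\,A_0}{r^{s}}=:C_s,
$$
which is \eqref{Cor z est}. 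The constant $C_s$ depends only on $s$, $A_0$ and $\ve_0$, hence is uniform in $j$ and $n$.

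There is no substantial obstacle here; the only thing to verify carefully is that the uniform analyticity domain provided by Lemma~\ref{LLL} (together with Theorem~\ref{CMPLX RPF}) does not shrink with $j,n$, so that the radius $r$ in the Cauchy estimate can be chosen once and for all. This is guaranteed because all bounds in Theorem~\ref{CMPLX RPF} and in the proof of Lemma~\ref{LLL} are uniform in $j$ and $n$.
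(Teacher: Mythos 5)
Your proof is correct and takes essentially the same route as the paper: the paper's derivation of Corollary \ref{Cor z} consists precisely of the observation ``Using the Cauchy integral formula,'' i.e., applying Cauchy's estimate on a slightly smaller disk to the function $g_{j,n}(z)=\tilde\Lambda_{j,n}(z)-\sum_{k=j}^{j+n-1}\Pi_k(z)$, which Lemma \ref{LLL} shows is analytic and uniformly bounded by $A_0$ on $|z|\le\ve_0$. One small point worth making explicit is that the non-vanishing of $\bbE_{m_0}[e^{z(S_{j+n}f-S_jf)}]$ on the fixed disk (needed for $\tilde\Lambda_{j,n}$ to be analytic there) is not a consequence merely of the value being $1$ at $z=0$, but is supplied by the proof of Lemma \ref{LLL}, where the expectation is shown to equal $\la_{j,n}(z)\bigl(U_{j,n}(z)+zO(\del_4^n)\bigr)$ with $U_{j,n}(0)=1$ and uniform bounds; your appeal to Lemma \ref{LLL} covers this, but the intermediate sentence about entirety and $\tilde\Lambda_{j,n}(0)=0$ by itself would not.
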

\begin{remark}
Clearly, \eqref{Cor z est} also holds  when $j+n<J$ since then the number of summands is uniformly bounded.
\end{remark}

Set 
\begin{equation}\label{pi def}
\Pi_{j,n}(z)=\sum_{k=j}^{j+n-1}\Pi_k(z).
\end{equation}

\begin{lemma}\label{L35}
Let $B$ be a constant and let $k\geq 2$. Then if $B$ is
sufficiently large there are constants $D$ and $r_0$ depending only on $B$ and $k$ so that
for every $t\in[-r_0,r_0]$ and each $j,n$ such that $B\leq \text{Var}(S_{j+n}f-S_j f)\leq 2B$ we have
$$
|\Pi_{j,n}^{(k)}(it)|\leq D.
$$
\end{lemma}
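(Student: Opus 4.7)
The plan is to reduce the lemma to a direct estimate on the log-MGF of $Y:=S_{j+n}f-S_jf=(S_{j,n}f)\circ T_0^j$, via Corollary~\ref{Cor z}. Applied with $s=k$, that corollary gives
$$
\bigl|\Pi_{j,n}^{(k)}(it)-\tilde\Lambda_{j,n}^{(k)}(it)\bigr|\leq C_k
$$
for $|t|\leq\ve$, so it is enough to exhibit a uniform bound on $|\tilde\Lambda_{j,n}^{(k)}(it)|$ for $|t|\leq r_0$ and all $(j,n)$ with $B\leq \text{Var}_{m_0}(Y)\leq 2B$.

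Set $\phi(z):=\bbE_{m_0}[e^{zY}]$, so $\tilde\Lambda_{j,n}(z)=\ln\phi(z)$. By the Fa\`a di Bruno formula applied to $\ln$, the derivative $\tilde\Lambda_{j,n}^{(k)}(z)$ is a universal polynomial (with coefficients depending only on $k$) in the ratios $\phi^{(m)}(z)/\phi(z)$, $1\leq m\leq k$. So the task splits into controlling the numerators $\phi^{(m)}(it)=\bbE_{m_0}[Y^m e^{itY}]$ and keeping the denominator $\phi(it)$ bounded away from $0$. For the numerators, since $|e^{itY}|=1$ we have $|\phi^{(m)}(it)|\leq\bbE_{m_0}[|Y|^m]$; and since $\tilde m_k(f_k)=0$ forces $\bbE_{m_0}[Y]=0$ and $Y=\bar S_{j,n}f\circ T_0^j$, Proposition~\ref{Mom prop} yields $\bbE_{m_0}[|Y|^m]\leq C_m(1+\sqrt{2B})^m$ uniformly in $(j,n)$. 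For the denominator, the elementary bound $|e^{iu}-1-iu|\leq u^2/2$ together with $\bbE_{m_0}[Y]=0$ gives
$$
|\phi(it)-1|\leq \tfrac12 t^2\, \bbE_{m_0}[Y^2]\leq Bt^2,
$$
so choosing $r_0:=\min\bigl(\ve,(4B)^{-1/2}\bigr)$ ensures $|\phi(it)|\geq 1/2$ for $|t|\leq r_0$. Plugging these two bounds into the Fa\`a di Bruno expansion gives $|\tilde\Lambda_{j,n}^{(k)}(it)|\leq D'$ with $D'=D'(B,k)$, and the lemma follows with $D:=D'+C_k$.

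There is no substantive obstacle here—the argument amounts to elementary manipulation of cumulant generating functions combined with the moment bounds of Proposition~\ref{Mom prop}. The only minor points are to verify that $Y$ really is a shifted sum of the right form for Proposition~\ref{Mom prop} (so that $\|\bar S_{j,n}\|_{L^2}^2=\text{Var}_{m_0}(Y)\leq 2B$), and to note that the hypothesis ``$B$ sufficiently large'' does not actually appear to be used in the argument above—it is presumably imposed to synchronize $r_0$ with other scales in later applications of the lemma.
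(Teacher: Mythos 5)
Your proof is correct and follows the same route as the paper's: reduce $\Pi_{j,n}^{(k)}$ to $\tilde\Lambda_{j,n}^{(k)}$ via Corollary~\ref{Cor z}, then bound $\tilde\Lambda_{j,n}^{(k)}(it)$ for small $|t|$ by moments of $Y=S_{j+n}f-S_jf$, which Proposition~\ref{Mom prop} (applied with $\|Y\|_{L^2}\leq\sqrt{2B}$) controls uniformly in $j,n$. The only difference is cosmetic: where the paper invokes \cite[Lemma 43]{DH} as a black box for the estimate $|\tilde\Lambda_{j,n}^{(k)}(it)|\leq D_k\,\bbE_{m_0}[|Y|^k]$ on $|t|\leq r(B)$, you rederive that bound directly via Fa\`a di Bruno and the elementary inequality $|\phi(it)-1|\leq\tfrac12 t^2\,\bbE_{m_0}[Y^2]\leq Bt^2$, which keeps $\phi(it)$ bounded away from $0$. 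Your side remark is also right: the hypothesis that $B$ be sufficiently large is not used in this lemma itself and is imposed only for the blocking construction in the proof of Proposition~\ref{Growth Prop}.
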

\begin{proof}
Applying \cite[Lemma 43]{DH} with $S=S_{j+n}f-S_j f$ we see that  there is $r=r(B)$ such that if $t\in[-r,r]$ then
$$
|\tilde\Lambda_{j,n}^{(k)}(it)|\leq D_k\bbE_{m_0}[|S|^k]
$$
for some constant $D_k$ which depends only on $k$. Using also Corollary \ref{Cor z} we derive that 
$$
|\Pi_{j,n}^{(k)}(it)|\leq D_k\bbE_{m_0}[|S|^k].
$$
Next, by Proposition \ref{Mom prop} we have 
$\DS
\bbE_{m_0}[|S|^k]\leq C(k,B).
$
These estimates prove the lemma.
%for some constant $C(k,B)$ which depends on $k,B$ and the parameter described in Proposition \ref{Mom prop}. 
%The lemma follows from the above estimates.
\end{proof}

\begin{proof}[Proof of Proposition \ref{Growth Prop}]
Fix some $k\geq3$.
Since $\sig_n=\|S_n\|_{L^2(m_0)}\to\infty$, using the martingale coboundary representation from Lemma \ref{Mart lemm}, given $B>0$ large enough we can decompose 
$\{0,...,n\!-\!1\!\}$ into a disjoint union of intervals $I_1,...,I_{k_n}$ in $\bbZ$ so that $I_j$ is to the left of $I_{j+1}$  and 
\begin{equation}\label{Var block}
B\leq \text{Var}_{m_0}(S_{I_j})\leq 2B
\end{equation}
where $\DS S_I=\sum_{j\in I}f_j\circ T_0^j$ for every interval $I$. Now, by Lemma \ref{Mart lemm} there is a constant\footnote{We can take $\DS A=2\sup_j\|u_j\|_{L^\infty}$,
 where $u_j$ are the coboundaries  from \eqref{MartCob}
.} $A>0$ independent of $B$ such that 
$\DS
\left|\|S_nf\|_{L^2}-\left(\sum_{k=J}^{n-1}\text{var}_{m_0}(M_k\circ T_0^k)\right)^{1/2}\right|\leq A
$
and for each $j>J$ we have 
$\DS
\left|\|S_{I_j}\|_{L^2}-\left(\sum_{k\in I_j}\text{var}_{m_0}( M_k\circ T_0^k)\right)^{1/2}\right|\leq A.
$

Hence, if we also assume that $B>(4A)^2$ then it follows that 
\begin{equation}\label{kn prop}
C_1\leq k_n/\sig_n^2\leq C_2
\end{equation}
for some constants $C_1,C_2>0$ which depend only on $B$.
Next, let $\DS \Pi_{I}(z)=\sum_{k\in I}\Pi_k(z)$. Then
by Lemma \ref{L35} there are constants $r_k>0$ and $D_k$ such that
$$
\sup_{j}\sup_{t\in[-r_k,r_k]}\left|\Pi_{I_j}^{(k)}(it)\right|\leq D_k.
$$
Hence,
\begin{equation}\label{pii bound}
\sup_{t\in[-r_k,r_k]}\left|\Pi_{0,n}^{(k)}(it)\right|\leq D_k k_n\leq C_2
D_k\sig_n^2.
\end{equation}
Combining this  with Corollary \ref{Cor z} and taking into account that $\sig_n\to\infty$ we see that 
$$
\sup_{t\in[-r_k,r_k]}\left|\tilde\Lambda_n^{(k)}(it)\right|\leq \tilde D_k\sig_n^2
$$
for some constant $\tilde D_k$, and the proof of the proposition is complete. 
\end{proof}

\section{Proof of the main results}\label{BE sec}
\subsection{Proof of Theorems \ref{BE} and \ref{ThWass}}
Let $\Lambda_n(t)=m_0(e^{it S_n/\sig_n})$. 
 Applying Theorems 5 and 9, and Corollary 11 from \cite{H} to $W_n=S_n/\sig_n$,
we see that
%Then by \cite[Theorem 5]{H}, \cite[Theorem 9]{H} \cite[Corollary 11]{H} applied with $W_n=S_n/\sig_n$, 
in order to prove Theorems \ref{BE} and \ref{ThWass} 
 it is enough to show that for every $u\geq 3$ there are constants $\del_u,C_u>0$ such that the function $\Lambda_n(t)$ is well defined, differentiable $u$ times on $[-\del_u\sig_n,\del_u\sig_n]$ and
$\DS \sup_{|t|\leq \sig_n\del_u} \!\! |\Lambda_n^{(u)}(t)|\!\!\leq\!\! C_u\sig_n^{-(u-2)}.
$
However, this is exactly Proposition \ref{Growth Prop}.

\subsection{Proof of Theorem \ref{ASIP}}
Let $A_k=S_{I_k}$, where $I_k$ were introduced in the proof of Proposition \ref{Growth Prop}.
Then, since $\DS \sup_k\|A_k\|_{L^2}<\infty$, we obtain from Proposition \ref{Mom prop} that for all $p$ we have 
$\DS \sup_k\|A_k\|_{L^p}<\infty$.
The ASIP will follow from \cite[Theorem 2.1]{DDH}, applied with the real-valued sequence $A_k=S_{I_k}$, and with an arbitrarily large $p$. In what follows we will verify   the conditions of   \cite[Theorem 2.1]{DDH}.

The verification of condition (2.1) in  
 \cite[Theorem 2.1]{DDH} is similar to \cite[\S 4.4]{DDH}, with some modifications. For readers' convenience we will provide the proof.
We have to show that there exists $\varepsilon_0\!>\!\!0$ and $C,c\!>\!\!0$ such that for all $k,n,m\!\!\in\! \bbN$, $b_1\!<\!b_2\!<\!\! \dots \!\!<\!b_{n+m+k}$,  and $t_1,\ldots ,t_{n+m}\in\bbR$ with $|t_j|\leq\varepsilon_0$, we have 
\begin{equation}\label{Gouzel}
\Big|\mathbb E\big(e^{i\sum_{j=1}^nt_j(\sum_{\ell=b_j}^{b_{j+1}-1}A_\ell)+i\sum_{j=n+1}^{n+m}t_j(\sum_{\ell=b_j+k}^{b_{j+1}+k-1}A_\ell)}\big)
\end{equation}
$$
-\mathbb E\big(e^{i\sum_{j=1}^nt_j(\sum_{\ell=b_j}^{b_{j+1}-1}A_\ell)}\big)\cdot\mathbb E\big(e^{i\sum_{j=n+1}^{n+m}t_j(\sum_{\ell=b_j+k}^{b_{j+1}+k-1}A_\ell)}\big)\Big|
$$
$$
\leq C(1+\max|b_{j+1}-b_j|)^{C(n+m)}e^{-ck}. 
$$
%To prove \eqref{Gouzel},   let us also denote $\tilde \cL_{j,z}$ the operator given by $g\to \tilde\cL_j(ge^{zf_j})$ and 
%$$
%\tilde\cL_{j,z}^n=\tilde\cL_{j+n-1,z}\circ\cdots\circ\tilde \cL_{j+1,z}\circ\tilde \cL_{j,z}.
%$$
 Recall
 the definition \eqref{TildeL} of the operators $\tilde\cL_{j,n}^n$.
\begin{lemma}
 There is a constant $\ve_0>0$ such that 
\begin{equation}\label{Bound}
 \sup_{j}\sup_{n}\sup_{|t|\leq \ve_0}\|  \tilde \cL_{j,it}^n\|<\infty.  
\end{equation}
\end{lemma}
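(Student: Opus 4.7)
The plan is to reduce the claim for $\tilde\cL_{j,it}^n$ to the corresponding bound for $\cL_{j,it}^n$, and then to extract the bound from the complex Ruelle--Perron--Frobenius theorem (Theorem~\ref{CMPLX RPF}) together with an elementary $L^1$--contraction estimate.

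First I would use the identity
$\tilde\cL_{j,it}^n g = \cL_{j,it}^n(g\cL_0^j\mathbf{1})/\cL_0^{j+n}\mathbf{1}$
(established as in the proof of Proposition~\ref{Tilde prop} from \eqref{TildeL}) to reduce to a BV bound on $\cL_{j,it}^n$. By Theorem~\ref{RPF 0}, $\|\cL_0^j\mathbf{1}\|_{BV}$ is uniformly bounded and, possibly after shrinking past a fixed $J$, $\cL_0^{j+n}\mathbf{1}$ is uniformly bounded below by a positive constant; then (V6) gives a uniform BV bound on $1/\cL_0^{j+n}\mathbf{1}$. Combining with the multiplicative property (V5) and (V3), the operator norm of multiplication by $\cL_0^j\mathbf{1}$ and division by $\cL_0^{j+n}\mathbf{1}$ on BV is uniformly bounded. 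So it suffices to prove
\[
\sup_j\sup_n\sup_{|t|\leq\ve_0}\|\cL_{j,it}^n\|_{BV}<\infty .
\]

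Now apply Theorem~\ref{CMPLX RPF}(iii):
\[
\cL_{j,it}^n g = \la_{j,n}(it)\, m_j^{(it)}(g)\, h_{j+n}^{(it)} + R_{j,n}(g),\qquad \|R_{j,n}\|\leq C_3\del_3^n .
\]
By Theorem~\ref{CMPLX RPF}(i) the factors $\|m_j^{(it)}\|$ and $\|h_{j+n}^{(it)}\|_{BV}$ are already uniformly bounded, so the only remaining task is to bound $|\la_{j,n}(it)|$ uniformly in $j,n$ and small $|t|$. For this I use the obvious contraction: since $|e^{it S_{j,n}f}|=1$ and $\cL_j^n$ is positive, $|\cL_{j,it}^n g|\leq \cL_j^n(|g|)$ pointwise; combining with the duality \eqref{Duality} applied to $f=\mathbf{1}$, one gets
$\|\cL_{j,it}^n g\|_1 \leq \|\cL_j^n(|g|)\|_1 = \|g\|_1 \leq \|g\|_{BV}$.
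Specializing to $g = h_j^{(it)}$, for which $m_j^{(it)}(g)=1$ (Theorem~\ref{CMPLX RPF}(ii)), and taking $L^1$--norms in the RPF decomposition gives
\[
|\la_{j,n}(it)|\cdot \|h_{j+n}^{(it)}\|_1 \leq \|h_j^{(it)}\|_1 + C_3\del_3^n\|h_j^{(it)}\|_{BV},
\]
whose right side is uniformly bounded by Theorem~\ref{CMPLX RPF}(i).

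It remains to bound $\|h_{j+n}^{(it)}\|_1$ away from zero uniformly in $j+n$. At $t=0$ one has $h_{j+n}^{(0)}=h_{j+n}$ with $\|h_{j+n}\|_1=m_{j+n}(h_{j+n})=1$ (Theorem~\ref{RPF 0}(i)). The analyticity and uniform boundedness of $z\mapsto h_{j+n}^{(z)}$ from Theorem~\ref{CMPLX RPF}(i), together with property (V3), give $\|h_{j+n}^{(it)} - h_{j+n}\|_{\infty}\leq C|t|$ uniformly in $j+n$, so $\|h_{j+n}^{(it)}\|_1 \geq 1 - C|t| \geq 1/2$ once $\ve_0$ is chosen small enough. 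This yields the uniform bound on $|\la_{j,n}(it)|$, and hence on $\|\cL_{j,it}^n\|_{BV}$, completing the proof. The only mildly delicate point is the uniformity across all $j$ and $j+n$; this is handled by invoking the uniform--in--$j$ statements of Theorem~\ref{RPF 0} and Theorem~\ref{CMPLX RPF}(i) everywhere, and by treating the finitely many small values $j+n<J$ separately.
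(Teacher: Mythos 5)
Your proof is correct and takes a genuinely different, more elementary route than the paper's. Both arguments begin by reducing $\tilde\cL_{j,it}^n$ to $\cL_{j,it}^n$ and invoking Theorem~\ref{CMPLX RPF}, which reduces the problem to a uniform bound on $|\la_{j,n}(it)|$ for small $|t|$. From that point the paper's proof is substantially heavier: it writes $|\la_{j,n}(it)|=|e^{\Pi_{j,n}(it)}|$ and, via Corollary~\ref{Cor z}, the blocking decomposition from the proof of Proposition~\ref{Growth Prop}, Proposition~\ref{Mom prop}, and \cite[Lemma 43]{DH}, controls the second- and third-order Taylor coefficients of $\Pi_{j,n}$ to obtain the Gaussian-type estimate $\|\cL_{j,it}^n\|\le C e^{-\frac14 t^2\mathrm{Var}(S_{j,n})}$, of which \eqref{Bound} is a weak consequence. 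Your route instead exploits the elementary $L^1$-contraction $\|\cL_{j,it}^n g\|_1\le\|g\|_1$ (positivity of $\cL_j^n$ plus the duality \eqref{Duality}), applied to $g=h_j^{(it)}$, together with the lower bound $\|h_{j+n}^{(it)}\|_1\ge\tfrac12$ that follows from the uniform analyticity of $z\mapsto h_{j+n}^{(z)}$ and $\|h_{j+n}\|_1=1$. This gives $\sup_{j,n}|\la_{j,n}(it)|<\infty$ directly, with no need for the blocking argument, the higher moment estimates, or \cite[Lemma 43]{DH}. (In fact Theorem~\ref{CMPLX RPF}(ii) gives the exact relation $\cL_{j,it}^n h_j^{(it)}=\la_{j,n}(it)h_{j+n}^{(it)}$, so the remainder $R_{j,n}$ in your eigenfunction step is not even needed.) The trade-off is that you only obtain the stated uniform bound, whereas the paper's longer argument produces the sharper decay in $t^2\mathrm{Var}(S_{j,n})$ along the way; that extra strength is, however, not used downstream in verifying condition (2.1) of \cite[Theorem 2.1]{DDH}, so your argument is fully adequate for the paper's purposes.
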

\begin{proof}
It follows from the definition of the operators $\tilde \cL_j$ and the uniform bounds (from above in the norm and below) on $\cL_0^n\textbf{\textbf{1}}$ that there is a constant $C>0$ such that for all real $t$,
$$
\|\tilde \cL_{j,it}^n\|\leq  C\| \cL_{j,it}^n\|.
$$
Now, by Theorem \ref{CMPLX RPF} 
%{\color{red}\sout{and Lemma \ref{LLL}}} 
we see that there  are constant $\ve$ and $C_1>0$ such that for every $j,n$ and $t\in[-\ve,\ve]$ we have 
$\DS 
\| \cL_{j,it}^n\|\leq C_1|\la_{j,n}(it)|=C_1|e^{\Pi_{j,n}(it)}|.
$ 
Indeed, since $\la_j(0)=1$ 
we have $\del_3^n=O(|\la_{j,n}(z)|)$  for $|z|$ small enough.
Now, using the analyticity in $z$ of the term inside the absolute value on the right hand side of 
 in \eqref{Cor z est}, differentiating twice at the point $z=0$ and using the Cauchy integral formula yields 
$$
\sup_{j,n}|\Pi_{j,n}'(0)|<\infty\,\,\text{ and }\,\,\sup_{j,n}|\Pi_{j,n}'(0)-\text{Var}(S_{j,n})|<\infty.
$$
where in the first estimate we have used that $\bbE[S_{j,n}]=0$. Next, 
we claim that 
there exist constants $C_0,\ve_1$ such that if $t\in[-\ve_1,\ve_1]$ then
%\footnote{\color{red} I do not think this estimate
%follows from or is similar to  Proposition \ref{Growth Prop}. \color{purple}YH:  I added a detailed explanation.}
\begin{equation}\label{cllaim}
|\Pi_{j,n}'''(it)|\leq C_0(1+\text{Var}(S_{j,n})).   
\end{equation}
The proof of \eqref{cllaim} when $\text{Var}(S_{j,n})\geq C$ for a sufficiently large $C$ is similar to the proof of Proposition \ref{Growth Prop}. Indeed, like at the beginning of the proof of  Proposition \ref{Growth Prop} we can decompose the set $\{j,j+1,...,j+n-1\}$ into blocks such that the variance along each block is between $B$ and $2B$ for a sufficiently large constant $B$. This is possible if $C$ is large enough. Repeating the rest of the arguments in the proof of  Proposition \ref{Growth Prop} with $\{j,j+1,...,j+n-1\}$ instead of $\{0,1,2,...,n-1\}$ and with $k=3$ we get the following version of 
\eqref{pii bound}: there is a constant $r_3>0$ such that
$$
\sup_{t\in[-r_3,r_3]}\left|\Pi_{j,n}'''(it)\right|\leq C_2\text{Var}(S_{j,n})\leq C_2(1+\text{Var}(S_{j,n})).
$$

To complete the proof of \eqref{cllaim} it remains to show that for every constant $C>0$ there are constants $C_0,\ve_1$ such that \eqref{cllaim} holds when 
$\text{Var}(S_{j,n})\!\leq\! C$. By applying Corollary~\ref{Cor z} this task is reduced  to showing that for every $C>0$ there are constants $C_0'>0$ and $\ve_1>0$ such that 
\begin{equation}\label{cllaim1}
|\tilde\Lambda_{j,n}'''(it)|\leq C_0'(1+\text{Var}(S_{j,n}))
\end{equation}
if $|t|\leq \ve_1$ and $\text{Var}(S_{j,n})\leq C$.  In order to prove \eqref{cllaim1}, we apply  \cite[Lemma 43]{DH} with $S=S_{j+n}f-S_j f$  and $k=3$ in order to find $\ve_1>0$ and $C_1>0$ (which depend on $C$) such that 
$$
\sup_{|t|\leq \ve_1}|\tilde\Lambda_{j,n}'''(it)|\leq C_1\bbE_{m_0}[|S_{j,n}|^3].
$$
Now \eqref{cllaim1} follows by Proposition \ref{Mom prop}, taking into account that 
$$\|S_{j,n}\|_{L^2}^3=(\text{Var}(S_{j,n}))^{3/2}\leq C^{3/2}\leq 
C^{3/2}(1+\text{Var}(S_{j,n})).$$

\noindent
Next, using \eqref{cllaim} together with the Lagrange form for the Taylor remainder of order $2$ of the function $\Pi_{j,n}$ around the origin we conclude that  
$$
\left|\Pi_{j,n}(it)+\frac12t^2(\text{Var}(S_{j,n}))\right|\leq C_1\left(|t|+t^2+|t|^3+|t|^3\text{Var}(S_{j,n})\right).
$$
where $C_1$ is some constant.
Therefore, if $\ve_0$ is small enough and $|t|\leq \ve_0$ then
$\DS
\| \cL_{j,it}^n\|\leq C_1'e^{-\frac14 t^2\text{Var}(S_{j,n})}
$
for some other constant $C_1'$.
In particular, \eqref{Bound} holds.
\end{proof}
 
Next let $Q_j(g)=\tilde m_j(g)\textbf{1}_{j+1}$, where we recall that $\textbf{1}_j$ is the function taking the constant value $1$ on $X_j$ and $\tilde m_j=(T_0^j)_*m_0$. Note that
$$
Q_j^k(g)=Q_{j+k-1}\circ\cdots\circ Q_{j+1}\circ Q_j(g)=\tilde m_j(g)\textbf{1}_{j+k}.
$$
Using these notations, we have
$$
 \mathbb E_{m_0}\big(e^{i\sum_{j=1}^n t_j \cdot (\sum_{\ell=b_j}^{b_{j+1}-1}A_\ell)+i\sum_{j=n+1}^{n+m}t_j \cdot (\sum_{\ell=b_j+k}^{b_{j+1}+k-1}A_\ell)}\big) 
 $$
 $$=\tilde m_{b_{n+m}+k}\left(\tilde\cL_{b_{n+m}+k,it_{n+m}}^{b_{n+m+1}-b_{n+m}}\circ                       \cdots   \circ\tilde\cL_{b_{n+1}+k,it_{n+1}}^{ b_{n+2}-b_{n+1}}\circ    \tilde\cL_{b_{n+1}}^{k}\circ  \tilde\cL_{b_n,it_n}^{b_{n+1}-b_n}\circ   \cdots  \circ\tilde\cL_{b_1,it_1}^{b_2-b_1}\circ\tilde\cL_0^{b_1}\textbf{1}\right) 
 $$
 $$
= \tilde m_{b_{n+m}+k}\left(\tilde\cL_{b_{n+m}+k,it_{n+m}}^{ b_{n+m+1}-b_{n+m}}\circ                       \cdots\circ  \tilde\cL_{b_{n+1}+k,it_{n+1}}^{ b_{n+2}-b_{n+1}}\circ(\tilde\cL_{b_{n+1}}^{k}-Q_{b_{n+1}}^k)\circ  \tilde\cL_{b_n,it_n} ^{b_{n+1}-b_n}\circ   \cdots\circ  \tilde \cL_0^{b_1}\textbf{1}\right)
 $$
$$+\tilde m_{b_{n+m}+k}\left(\tilde\cL_{b_{n+m}+k,it_{n+m}} ^{b_{n+m+1}-b_{n+m}}\circ                       \cdots\circ  \tilde\cL_{b_{n+1}+k,it_{n+1}}^ {b_{n+2}-b_{n+1}}\circ Q_{b_{n+1}}^k\circ \tilde \cL_{b_n,it_n}^{b_{n+1}-b_n}\circ   \ldots\circ   \tilde\cL_0^{b_1}\textbf{1}\right). 
$$
Now \eqref{Gouzel} follows by \eqref{Bound}, the estimates 
$$
\|\tilde \cL_j^k-Q_j^k\|\leq C\del^k
$$
for $\del\in(0,1)$ and $C>0$ and the observation that 
$$
\tilde m_{b_{n+m}+k}\left(\tilde\cL_{b_{n+m}+k,it_{n+m}} ^{b_{n+m+1}-b_{n+m}}\circ                       \cdots\circ  \tilde\cL_{b_{n+1}+k,it_{n+1}}^ {b_{n+2}-b_{n+1}}\circ Q_{b_{n+1}}^k\circ \tilde\cL_{b_n,it_n}^{b_{n+1}-b_n}\circ   \ldots\circ  \tilde\cL_0^{b_1}\textbf{1}\right)
$$
$$
=\mathbb E\big(e^{i\sum_{j=1}^nt_j(\sum_{\ell=b_j}^{b_{j+1}-1}A_\ell)}\big)\cdot\mathbb E\big(e^{i\sum_{j=n+1}^{n+m}t_j(\sum_{\ell=b_j+k}^{b_{j+1}+k-1}A_\ell)}\big).
$$

Next, note that if $B$ from \eqref{Var block} is large enough then condition (2.2) in \cite[Theorem~2.1]{DDH} is satisfied, namely that  the variance of $A_1+...+A_n$ grows at least linearly fast in $n$. 
Indeed, relying on  Lemma \ref{Mart lemm}, the sums $A_a+...+A_b$ can be approximated in $L^\infty$ by an appropriate sum $Z_a+...+Z_b$ of reverse martingale differences, uniformly in $a$ and $b$ and $B$. Taking $B$ large enough we see that the variance of each $Z_j$ is bounded below by a positive constant and so, by the orthogonality property of martingales, the variance of $Z_1+...+Z_n$ grows at least linearly fast in $n$. Thus, the variance of $A_1+...+A_n$ grows at least linearly fast in $n$.

Next, we verify  condition (2.5) in \cite[Theorem 2.1]{DDH}. We need to show that there exist constants $C>0$ and $\del\in(0,1)$ such that for all $j$ and $k$,
\begin{equation}\label{Dec}
|\text{Cov}(A_j,A_{j+k})|\leq C\del^k.    
\end{equation}
We first recall that there are constants $C_0>0$ and $\del\in(0,1)$ such that, with $F_j=f_j\circ T_0^j$ we have 
$\DS
|\text{Cov}(F_j,F_{j+k})|\leq C_0\del^k.
$

In order to prove \eqref{Dec}, let us write $I_j=\{a_j,a_{j+1},...,b_j\}$. Then
$$
|\text{Cov}(A_j,A_{j+k})|=\left|\text{Cov}(S_{I_j}, S_{I_{j+k}})\right|\leq 
\sum_{n\in I_{j}, m\in I_{j+k}}\left|\text{Cov}(F_n,F_{m})\right|
$$
$$
\leq C_0\sum_{n\in I_{j}, m\in I_{j+k}}\del^{m-n}=C_0\del^{a_{j+k}-b_j}\sum_{n\in I_{j}, m\in I_{j+k}}\del^{m-a_{j+k}}\del^{b_j-n}\leq C_0\del^{a_{j+k}-b_j}(1-\del)^{-2}.
$$
To complete the proof of \eqref{Dec}, we note that $a_{j+k}-b_j\geq k$ since there are $k$ blocks between $I_j$ and $I_{j+k}$.

\noindent
We conclude that all the results in \cite[Theorem 2.1]{DDH} 
 are true for the sequence $A_j=S_{I_j}$ and every $p\!\!>\!\!2$. Thus
there is a coupling between the sequence $A_1,A_2,...$ and a sequence $G_1,G_2,...$ of independent centered Gaussian random variables such that for every $\ve>0$,
\begin{equation}\label{asip.0}
\left|\sum_{i=1}^{k}A_i-\sum_{j=1}^k G_j\right|=o(k^{\frac 14+\ve}),\,\,\text{a.s.}
\end{equation}
and  all the properties specified in Theorem \ref{ASIP} hold with the partial sums of the $A_j$'s.

Next, let $D_j=\max\{|S_{b_j}-S_k|: k\in I_j\}$, where we recall $I_j=\{a_j,a_{j+1},...,b_j\}$.  We claim next that for every $p\geq 2$ there is a constant $c_p>0$ such that 
\begin{equation}\label{nnn}
\sup_j\|\cD_j\|_p\leq c_p.  
\end{equation}
Define 
$$
\cR_j=\max\{|S_{b_j}M-S_kM|: k\in I_j\}
$$
where $M=(M_j)$ is the martingale part of $(f_j)$. Then, since $\DS \sup_j\|S_{I_j}M\|_p\leq a<\infty$ for some $a_p>0$, by  Doob's maximal inequality we see that  for all $p>1$ there exists $b_p>0$ such that
$\DS
\sup_j\|\cR_j\|_p\leq b_p.
$

To complete the proof of \eqref{nnn} we note that  
$d_j=\max\{|S_{\ell}-S_\ell M|: \ell\in I_j\}$ satisfy
$\DS
  \sup_j\|d_j\|_\infty<\infty
$
since $(f_j)$ and $(M_j)$ differ by a sequential coboundary (see Lemma \ref{Mart lemm}).\vskip1mm

Now we are ready to complete the proof of Theorem \ref{ASIP}. The following arguments are similar to  \cite[Section 4]{Haf SPL}. First, 
by the Markov inequality we see that for every $\ve>0$ and $p>2$ such that $\ve p>1$ we have 
$$
\bbP(|\cD_j|\geq j^\ve)=\bbP(|\cD_j|^p\geq j^{\ve p})=O(j^{-\ve p}).
$$
By the Borel-Cantelli Lemma $|\cD_j|=O(j^\ve)$, a.s. Next, let
$k_n=\max\{k: b_k\leq n \}$. Then
\begin{equation}\label{eest}
\left|S_n-\sum_{j\leq k_n}A_j\right|\leq \cD_{k_n}=O(k_n^\ve)=O(\sig_n^{2\ve})
\end{equation}
where in the last equality we have used \eqref{kn prop}.
Next, let $F_j=f_j\circ T_0^j$. Let us consider the following sequences $X,Y$ and $Z$ of random vectors: 
$X=(F_1,F_2,...)$, $Y=(Y_1,Y_2,...)$ where $Y_{j}=A_{i}$ if $j=b_i$ and $Y_j=0$ otherwise, and $Z=(Z_1, Z_2,...)$, where $Z_{j}=G_{i}$ if $j=b_i$ and $Z_j=0$ otherwise. 
\begin{lemma}\label{BP L}
There is a probability space on which we can define all three sequences $X,Y$ and $Z$ without changing the  distributions of $(X,Y)$ and $(Y,Z)$.
\end{lemma}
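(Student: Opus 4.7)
The plan is to invoke the Berkes--Philipp gluing lemma \cite[Lemma A.1]{BP} (already used in Section \ref{Eg Sec}), which is precisely designed for combining two couplings that share a common marginal on a Polish space. First I would observe that $Y$ is a \emph{deterministic} measurable function of $X$: writing $Y_j=A_i=\sum_{\ell\in I_i}F_\ell$ when $j=b_i$, and $Y_j=0$ otherwise, the sequence $Y$ is a fixed Borel map applied to $X$. Consequently, the joint law of $(X,Y)$ on $\bbR^\infty\times\bbR^\infty$ is uniquely determined by the law of $X$ under $m_0$, and in particular its $Y$-marginal agrees with the $Y$-marginal of the coupling of $(Y,Z)$ obtained from the \cite[Theorem 2.1]{DDH} coupling of $(A_i)$ with $(G_i)$ (the latter being extended to $(Y,Z)$ by inserting zeros at the indices $j\notin\{b_1,b_2,\ldots\}$).

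Since $\bbR^\infty$ with the product topology is Polish, regular conditional probabilities exist. The construction I would then carry out is the standard one: on $\bbR^\infty\times\bbR^\infty\times\bbR^\infty$ I define a joint law by sampling $(X,Y)$ from its given joint distribution and, conditionally on $Y$, sampling $Z$ from the regular conditional distribution of $Z$ given $Y$ coming from the coupling of $(Y,Z)$. The $(X,Y)$-marginal is then exactly the original one by construction, while the $(Y,Z)$-marginal matches the DDH coupling because the $Y$-marginals agree in both couplings and $Z$ is drawn from the correct conditional kernel given $Y$. The only point to verify is the existence of the conditional kernel, which is immediate for laws on Polish spaces. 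Accordingly I do not anticipate any genuine obstacle here; this lemma is essentially a measure-theoretic bookkeeping step whose purpose is to transfer the ASIP proven for the block sums $A_i$ back to the partial sums of the original sequence $F_j$ via the control on the fluctuations within a block provided by \eqref{nnn}.
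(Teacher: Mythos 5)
Your argument is correct, and it is essentially a more streamlined version of what the paper does. The paper iterates the Berkes--Philipp gluing lemma \cite[Lemma A.1]{BP} coordinate by coordinate, producing a coupling $P_n$ on $(\bbR^d\times\bbR^d\times\bbR^d)^n$ of the first $n$ coordinates, extends $P_n$ arbitrarily to the infinite product, and then extracts a weak subsequential limit via a tightness argument. You instead observe that \cite[Lemma A.1]{BP} (equivalently, the standard gluing construction via regular conditional kernels) already applies verbatim to the Polish space $\bbR^\infty$ with the product topology, so a single application produces the required triple law with no passage to a limit. Your remark that $Y$ is a deterministic Borel function of $X$ is not logically needed --- what matters is only that the $Y$-marginal of the $(X,Y)$ law agrees with the $Y$-marginal of the $(Y,Z)$ law, which you also note --- but it is a harmless simplifying observation. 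In short: both routes are correct; yours avoids the tightness/subsequence step at the cost of invoking regular conditional probabilities on the infinite product directly, while the paper's finite-dimensional iteration keeps everything concrete but requires the extra compactness argument to pass to the limit.
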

Using this lemma Theorem \ref{ASIP} follows by plugging in $k=k_n$ in \eqref{asip.0} using that $k_n\asymp \sig_n^2$ and applying \eqref{eest} with $\ve<\frac14$.
Lemma \ref{BP L} appeared as \cite[Lemma 4.1]{Haf SPL}, but we include its proof for the sake of completeness. 
\begin{proof}[Proof of Lemma \ref{BP L}]
Iterating the Berkes-Phillip lemma (see \cite[Lemma A.1]{BP}) we see that for each $n $ there is a probability measure $P_n$ on $(\bbR^d\times \bbR^d\times \bbR^d)^n$ which gives raise to a random vector $\{(\tilde X_j,\tilde Y_j,\tilde Z_j), j\leq n\}$ such that $\{(\tilde X_j,\tilde Y_j), j\leq n\}$ and $\{(X_j,Y_j), j\leq n\}$ have the same distribution and  $\{(\tilde Y_j,\tilde Z_j), j\leq n\}$ and $\{(Y_j,Z_j), j\leq n\}$ have the same distributions. Let us extend $P_n$ arbitrarily to a probability measure on $(\bbR^d\times \bbR^d\times \bbR^d)^\bbN$. Then the sequence of measures $\bbP_n$ is tight, and we can take any weak limit of $\bbP_n$.
\end{proof}

%In \cite[Section 4]{Haf SPL}  this was obtained using \cite[Theorem 6.17]{PelB} of a certain mixing condition which is not present in our setup.
%\qed

\appendix
\renewcommand{\thesection}{\Alph{section}}
\numberwithin{equation}{section}

\section{Proof of Theorem \ref{RPF 0}}\label{Sec app A}

\subsection{Proof of Theorem \ref{RPF 0}(i)}
Let $\ve_0>0$ be such that $ \eta:=\rho+\ve_0<1$ and let 
 $a>0$ be a constant such that  $a\ve_0>K_0$, where $\rho$ and $K_0$ come from \eqref{LY Iter}. 
 %Let us denote $\eta=\rho+\ve_0<1$. 
 Then it follows from \eqref{LY Iter} that for every $n\geq N$ and all $j\geq 0$ we have 
 $$
\cL_j^n\cC_{j,a}\subset\cC_{j+n,a\rho^{1+N-n}+K_0}\subset \cC_{j+n,\eta a}.
 $$
 Given $C>0$ consider the cone %(of $C$-balanced functions) given by 
 $$
 \cK_j(C)=\{g\in B_j, \;\; g>0, \quad \text{esssup }g\leq C\text{ essinf }g\}
 $$
 where  the essential supremum and infimum are with respect to $m_j$. 
Denote by $\mathsf{d}_j$ the projective Hilbert metric associated with the cone $\cC_{j,a}$ (see \cite{Bir, Liver}).
 Then by \cite[(1.1)]{Buzzi} %(see also references therein) 
 the projective diameter of $\cK_j(C)\cap \cC_{j,\eta a}$ inside $\cC_{j,a}$ does not exceed a constant $R=R(a,\eta,V,C)$ which depends only on $a,\eta,C$ and 
 $\DS V:=\sup_j v_j(\textbf{1})$.  Next, combining (V3), (SC), \eqref{LY Iter} and the relation $(\cL_j^n)^*m_{j+n}=m_j$, we see that there are constants $M\geq N$ and $C$ (depending on $a$ but not on $j$) such that for every $j$ and $n\geq M$ we have 
 $$
 \cL_j^n\cC_{j,a}\subset \cK_{j+n}(C).
 $$
 We conclude that for every $j$ and $n\geq M$ the projective diameter of $\cL_j^n\cC_{j,a}$ 
 inside $\cC_{j+n,a}$ does not exceed $R$.

The next step of the proof is the following result.
\begin{lemma}\label{LemProps}
(a) There is a constant $A>0$ such that for every  $a$, $j$ and $h\in\cC_{j,a}$, 
$$
\|h\|\leq Am_j(h)
$$
\vskip0.1cm
%\item If also $a>V=\sup_jv_j(\textbf{1})$ then there exists $\ve_0>0$ such that for every $j$ the cone $%\cC_{j,a}$ contains a ball of radius $\ve_0$ around the constant function $1$ (in the $BV$ norm).
%\vskip0.1cm
(b)  If  $\DS a>V$ then there is  a constant $r_0>0$ such that  every real valued $g\in B_j$ can be written in the form $g=g_1-g_2$ where $g_i\in\cC_{j,a}$ and $\|g_1\|+\|g_2\|\leq r_0\|g\|$.
\end{lemma}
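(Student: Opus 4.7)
For part (a), the plan is essentially to unwind the definitions. Any $h\in\cC_{j,a}$ is nonnegative, so $\|h\|_{1,j}=m_j(h)$, and by the cone condition $v_j(h)\leq am_j(h)$. Adding the two bounds immediately yields $\|h\|_{BV}\leq(1+a)m_j(h)$, so I would take $A=A(a)=1+a$. (The constant necessarily depends on $a$, since the cone enlarges with $a$; presumably $A$ is allowed to depend on $a$ in the statement.)

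For part (b), my plan is to write $g=g_1-g_2$ with $g_1=g+C\textbf{1}$ and $g_2=C\textbf{1}$ for a constant $C>0$ of the form $C=c\|g\|_{BV}$. Three conditions must be checked: $g_1\geq 0$, $g_1\in\cC_{j,a}$, and $g_2\in\cC_{j,a}$. The third is free, because $v_j(g_2)=Cv_j(\textbf{1})\leq CV\leq aC=am_j(g_2)$, using (V4) and the hypothesis $a>V$. For the first, I would bound $\|g\|_\infty\leq(1+C_{\mathrm{var}})\|g\|_{BV}$ via (V3), so that $C\geq(1+C_{\mathrm{var}})\|g\|_{BV}$ suffices to make $g_1$ nonnegative. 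For the second, a short computation using (V1), (V2) and (V4) reduces the cone inequality $v_j(g_1)\leq am_j(g_1)$ to the requirement $v_j(g)-am_j(g)\leq (a-V)C$, which, thanks to $a>V$, holds as soon as $C\geq \|g\|_{BV}/(a-V)$.

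Choosing $C=c\|g\|_{BV}$ with $c=\max\bigl\{1+C_{\mathrm{var}},\,(a-V)^{-1}\bigr\}$ then satisfies all three conditions simultaneously, and a brief direct estimate gives $\|g_1\|_{BV}+\|g_2\|_{BV}\leq \|g\|_{BV}+2C(1+V)=\bigl(1+2c(1+V)\bigr)\|g\|_{BV}$, so $r_0=1+2c(1+V)$ works. No serious obstacle is anticipated; the only point needing care is that $C_{\mathrm{var}}$ and $V$ are uniform in $j$ (by (V3) and (V4)), ensuring that $A$ and $r_0$ depend only on $a$ and the abstract constants of the setup and not on $j$ — a uniformity that is crucial for the Hilbert-metric contraction argument used in the body of Theorem~\ref{RPF 0}.
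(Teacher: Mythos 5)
Your proof of part (a) coincides with the paper's: both use $\|h\|_{1}=m_j(h)$ for $h\geq 0$ and $v_j(h)\leq am_j(h)$ to get $A=1+a$, and you correctly note that this constant must depend on $a$.

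Your proof of part (b) also uses exactly the paper's decomposition $g=(g+C\mathbf{1})-C\mathbf{1}$, but there is an arithmetic slip in the cone verification that makes your stated constant insufficient. You reduce $v_j(g_1)\leq am_j(g_1)$ to $v_j(g)-am_j(g)\leq(a-V)C$ and then claim this holds as soon as $C\geq\|g\|_{BV}/(a-V)$; that would require $v_j(g)-am_j(g)\leq\|g\|_{BV}=v_j(g)+\|g\|_1$, i.e. $-am_j(g)\leq\|g\|_1$, which fails whenever $m_j(g)<0$ and $a>1$. Note $a$ is typically large here: in the proof of Theorem~\ref{RPF 0}(i) it is chosen with $a\ve_0>K_0$, and in part (b) one also needs $a>V$. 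Using $|m_j(g)|\leq\|g\|_1$ the correct bound is $v_j(g)-am_j(g)\leq v_j(g)+a\|g\|_1\leq(1+a)\|g\|_{BV}$, so you need $C\geq\frac{1+a}{a-V}\|g\|_{BV}$. This is exactly why the paper takes $C_0=\|g\|_\infty+\frac{1+a}{a-V}\|g\|_{BV}$ (the first term gives $g_1\geq 0$, the second the cone inequality). Replacing your $c=\max\{1+C_{\mathrm{var}},(a-V)^{-1}\}$ by $c=\max\{1+C_{\mathrm{var}},(1+a)/(a-V)\}$ fixes the gap and the remainder of your estimate for $r_0$ goes through unchanged.
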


\begin{proof}
To prove (a) we note that if $h\in\cC_{j,a}$ then $\|h\|=m_j(h)+v_j(h)\leq (1+a)m_j(h)$ so we can take $A=1+a$.
\vskip0.1cm

In order to prove (b), given $g\in B_j$ we take $C_0=C_0(g)=\|g\|_\infty+\frac{1+a}{a-V}\|g\|_{BV}$. It is immediate to check that $ g+C_0\in\cC_{j,a}$. Since $C_0\leq C\| g\|_{BV}$ for some constant $C$ which does not depend on $j$ we have 
$\DS
g=(g+C_0)-C_0
$
and, with $\|\cdot\|=\|\cdot\|_{BV}$,
$$
 \|g+C_0\|+\|C_0\|\leq \|g\|+2\|C_0\|\leq \|g\|(1+2C_0+2C_0V)\leq r\|g\|
$$
for some constant $r=r(a,V,C)$. Thus we can take $g_1=(g+C_0)$ and $g_2=C_0$ (the constant function).
Note that $g_2\in\cC_{j,a}$ since $a>V$.
\end{proof}

Based on Lemma \ref{LemProps}, the proof of proof of Theorem \ref{RPF 0}(ii) is completed 
like in \cite[Chapters 4 and 5]{HK}. 
In fact, here the situation is much simpler, and  for readers' convenience we provide  the details.
First, by taking $f=1$ in \eqref{Duality} and letting $g$ vary we see that $(\cL_j^n)^*m_{j+n}=m_j$ for all $j$ and $n$.  Next, fix  some $a$ large enough and take two sequences of functions $g_n,f_n\in\cC_{n,a}$. Denote $G_{j,n}=\cL_{j}^n g_{j}$ and $F_{j,n}=\cL_{j}^n f_{j}$.  Then, since the projective diameter of the image of $\cL_k^s\cC_{a,k}$ inside $\cC_{a,k+s}$ does not exceed $R$ if $s\geq M$, we see  by \cite[Theorem 1.1]{Liver} that for every $j$ and $n\geq M$ we have 
$$
\mathsf{d}_j(F_{j,n},G_{j,n})\leq (c(R))^{[(n-1)/M]}
$$
where $c(R)=\tanh(R/4)\in(0,1)$. 

 For a positive function $g$, define the normalized iterates by 
$$
\bar\cL_{j}^n g=\frac{\cL_j^n g}{m_{j+n}(\cL_j^n g)}=\frac{\cL_j^n g}{m_{j}(g)}.
$$
Then by \cite[Theorem A.2.3]{HK} and Lemma \ref{LemProps}(1) we have 
\begin{equation}\label{exp temp}
\left\|\bar\cL_{j}^nf_{j} -\bar\cL_{j}^n g_{j}\right\|_{BV}\leq \frac 12 A (c(R))^{[(n-1)/M]}.
\end{equation}
Next, fix some  function $h_0\in \cC_{a,0}$ such that $m_0(h_0)=1$ and for $j>0$ define $h_j=\cL_0^j h_j$. Then $h_{j+1}=\cL_jh_j$ and $m_j(h_j)=1$ for all $j\geq0$. Therefore, 
\begin{equation}\label{h id}
\bar\cL_{j}^n h_{j}=h_{j+n} 
\end{equation}
for all $j\geq0$ and $n>0$. Moreover, $h_j\in \cC_{a,j}$ (due to the equivariance of the cones). 
By taking $g_{j}=h_{j}$ in \eqref{exp temp} and using \eqref{h id} we see that 
$$
\left\|\bar\cL_{j}^nf_{j} -h_{j+n}\right\|_{BV}\leq \frac 12 A (c(R))^{[(n-1)/M]}.
$$
Multiplying by $m_j(f_j)$ we get that, with $\del=(c(R))^{1/M}$, for all $f\in \cC_{a,j}$ we have 
$$
\|\cL_j^n(f)-m_j(f)h_{j+n}\|\leq m_j(f) B\del^n
$$ 
where $B$ is some constant. Using Lemma \ref{LemProps}(b) we obtain \eqref{Exp Conv Main}
 with $C=2r_0B$.
\qed

\subsection{Proof of Theorem \ref{RPF 0}(ii)}
To see why $(T_j)_*\mu_j=\mu_{j+1}$, take a measurable function $g:X_j\to\bbR$ and write
$$
\left[(T_j)_*\mu_j\right](g)=\mu_j(g\circ T_j)=m_j(h_j g\circ T_j)=m_{j+1}((\cL_j h_j)\cdot g)=
m_{j+1}(h_{j+1}g)=\mu_{j+1}(g).
$$
To prove  asymptotic uniqueness of $\mu_j$, let 
$\tilde \mu_j\!\!=\!\!g_jdm_j$ 
be another sequence of probability measures such that $(T_j)_*\tilde\mu_j=\tilde\mu_{j+1}$. 
We show that 
$\DS \lim_{j\to\infty}\|h_j\!-\!g_j\|_{1}\!\!=\!\!0$. We claim that 
\begin{equation}\label{g equ}
 \cL_j g_j=g_{j+1},\quad  m_{j+1}-\text{a.e.}   
\end{equation}   
Indeed given a function $f:X_{j+1}\to\bbR$ we have 
$$
m_{j+1}((\cL_j g_j)f)=m_j(g_j(f\circ T_j))=\tilde\mu_{j}(f\circ T_j)=\tilde\mu_{j+1}(f)=m_{j+1}(g_{j+1}f)
$$
proving \eqref{g equ}.
Iterating \eqref{g equ} we see that $g_n=\cL_0^n g_0$, $m_n$-a.e. Next, let $\ve>0$ and let $q_0:X_0\to\bbR$ be a BV function such that $\|q_0-g_0\|_{1}<\ve$. By normalizing $q_0$ if needed we can always assume that $m_0(q_0)=1$.
By Theorem \ref{RPF 0}(i) we see that 
$$
\|\cL_0^nq_0-h_n\|_{BV}\leq C\|q_0\|_{BV}\del^n\to 0, \text{ as }n\to\infty.
$$
On the other hand,
$$
\|\cL_0^nq_0-g_n\|_1=\|\cL_0^nq_0-\cL_0^ng_0\|_1\leq \|\cL_0^n(|q_0-g_0|)\|_1=\|g_0-d_0\|<\ve.
$$
Therefore, by taking $n$ large enough we get that $\|h_n-g_n\|_1<\ve$, and the proof of Theorem \ref{RPF 0}(ii) is complete.
   \qed

\section{Gibbs measures for SFT.} % and the reduction for one sided to two sided SFT}
\label{ScSFT-Gibbs}

In this section we denote by
 $T_j:X_j\to X_{j+1}$, 
  a one sided topologically mixing sequential subshift of finite type and by 
 $\tilde T_j:\tilde X_j\to  \tilde X_{j+1}$, $j\in\mathbb{Z}$
  the corresponding two sided shift. 
  For a point $(x_j,x_{j+1},...)\in X_j$ and $m\in \bbN$ let
 $$
[x_j,x_{j+1},...,x_{j+m-1}]=\left\{(x'_{j+k})_{k\geq 0}\in X_j: x'_{j+k}=x_{j+k},\,\forall k<m\right\}
 $$
 be  the corresponding cylinder of length $m$. 
 Cylinders in $\tilde X_j$ are denoted similarly.

\begin{remark}
The reason we consider here negative indexes $j$ is to provide a complete theory which includes uniqueness of Gibbs measures (see Section \ref{g sec}) and relations with two sided shift via a non-stationary version of Sinai's lemma (see Lemma \ref{Sinai}). If we begin with a one sided shift then there are many Gibbs measures corresponding to a given sequence of potentials, each of which corresponds to an extension of the shift to negative indexes and an extension of the sequence of potentials to a two sided sequence. Considering negative indexes $j$ also allows us to uniquely define two sided shifts which have applications to non-autonomous hyperbolic systems (see Appendix \ref{App C}).
\end{remark}

\subsection{A sequential  Sinai's Lemma}
Let $\pi_j:\tilde X_j\to X_j$ be given by 
$$
\pi_j((x_{j+k})_{k\in\bbZ})=(x_{j+k})_{k\geq 0}.
$$

\begin{lemma}\label{Sinai}
 Fix some $\al\in(0,1]$ and let $\psi_j:\tilde X_j\to\bbR$ be uniformly H\"older continuous with exponent $\al$. Then there are uniformly H\"older continuous functions $u_j:\tilde X_j\to\bbR$ with exponent $\al/2$ and $\phi_j:X_j\to\bbR$  such that 
 $$
\psi_j=u_{j}-u_{j+1}\circ \sig_j+\phi_j\circ \pi_j.
 $$
 Moreover
  if $\|\psi_j\|_{\al}\to 0$ then $\|u_j\|_{\al/2}\to 0$. %Furthermore, if
 \end{lemma}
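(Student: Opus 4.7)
The approach I would take follows the classical ``Sinai trick'' for recoding a Hölder function on a two-sided shift as a function on the one-sided shift plus a coboundary, adapted to the non-autonomous setting. First I would make once and for all, for each $j\in\bbZ$ and each symbol $a\in\mathcal{A}_j$, a choice of an admissible past $r(j,a)=(r_k(j,a))_{k<0}$, i.e.\ a one-sided sequence compatible with the transition matrices and ending with $a$ at position $-1$ (existence follows from the uniform mixing assumption, since each state has a predecessor). For $x\in\tilde X_j$ define the retraction
$$\bar x_k=\begin{cases} x_k, & k\geq 0,\\ r_k(j,x_0), & k<0,\end{cases}$$
so $\bar x$ depends only on $\pi_j x$. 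Then set
$$u_j(x)=\sum_{n=0}^\infty\bigl[\psi_{j+n}(\sig_j^n x)-\psi_{j+n}(\sig_j^n \bar x)\bigr].$$
Since $\sig_j^n x$ and $\sig_j^n \bar x$ agree at all coordinates of index $\geq -n$, we have $\mathsf{\tilde d}_{j+n}(\sig_j^n x,\sig_j^n \bar x)\leq 2^{-n}$, so each summand is bounded by $\|\psi_{j+n}\|_\al 2^{-n\al}$, giving uniform boundedness of $u_j$ from $\sup_j\|\psi_j\|_\al<\infty$.

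The hardest step is the uniform $\al/2$ Hölder bound on $u_j$. Take $x,y\in\tilde X_j$ with $\mathsf{\tilde d}_j(x,y)=2^{-m}$ (so $x_k=y_k$ for $|k|<m$, and in particular $x_0=y_0$, hence $\bar x$ and $\bar y$ use the same canonical past and agree for all $k<m$). I would split the tail at $n\!=\!m/2$. For $n\leq m/2$ one checks $\mathsf{\tilde d}_{j+n}(\sig_j^n x,\sig_j^n y)\leq 2^{-(m-n)}$ and similarly for $\bar x,\bar y$, so each term is $\leq 2\|\psi\|_\al 2^{-(m-n)\al}$, summing (geometric) to $O(2^{-m\al/2})$. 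For $n>m/2$ I would just use the bound from the convergence step above for each of $x$ and $y$, getting $\leq 2\|\psi\|_\al 2^{-n\al}$ per term and a tail sum of $O(2^{-m\al/2})$. Combining yields $|u_j(x)-u_j(y)|\leq C(\mathsf{\tilde d}_j(x,y))^{\al/2}$ uniformly in~$j$. This is the step where regularity drops from $\al$ to $\al/2$; the main obstacle is precisely this simultaneous control over the ``future'' decay (from the mixing of $\sig_j^n$) and the ``past retraction'' decay (from $\bar x$ agreeing with $x$ on positive coordinates), whose optimal balancing gives the exponent $\al/2$.

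Finally I would define $\phi_j$ by declaring
$$\phi_j(\pi_j x):=\psi_j(x)-u_j(x)+u_{j+1}(\sig_j x),$$
which is the desired cohomological identity by construction. Well-definedness on $X_j$ follows from a direct telescoping computation showing
$$\phi_j(\pi_j x)=\psi_j(\bar x)+\sum_{n\geq 1}\bigl[\psi_{j+n}(\sig_j^n \bar x)-\psi_{j+n}(\sig_{j+1}^{n-1}\overline{\sig_j x})\bigr],$$
where both $\bar x$ and $\overline{\sig_j x}$ depend only on $\pi_j x$; convergence of this series follows from the agreement of the two points at all coordinates of index $\geq 1-n$. That $\phi_j$ inherits uniform $\al/2$-Hölder regularity follows from the Hölder bounds on $\psi_j$ and $u_j$ together with the fact that $\pi_j$ is a quotient map relating the metrics $\mathsf{\tilde d}_j$ and $\mathsf{d}_j$. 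The ``moreover'' clause is then immediate: the constants in the Hölder and sup bounds for $u_j$ depend only on $\sup_{k\geq j}\|\psi_k\|_\al$ (since the defining series for $u_j$ only involves $\psi_{j+n}$ for $n\geq 0$), which tends to $0$ as $j\to\infty$ by hypothesis.
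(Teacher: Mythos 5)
Your proposal is correct and follows essentially the same argument as the paper: the same canonical-past retraction $\bar x$ (the paper calls it $r_j(y)$), the same telescoping series for $u_j$, the same splitting of the sum at $m/2$ to obtain the $\alpha/2$-Hölder bound, and the same telescoping identity showing that $\psi_j-u_j+u_{j+1}\circ\sigma_j$ depends only on the nonnegative coordinates. If anything you are slightly more explicit than the paper on two minor points — the well-definedness of $\phi_j$ via the explicit formula $\phi_j(\pi_j x)=\psi_j(\bar x)+\sum_{n\geq 1}[\psi_{j+n}(\sigma_j^n\bar x)-\psi_{j+n}(\sigma_{j+1}^{n-1}\overline{\sigma_j x})]$ and the transfer of the Hölder estimate from $\phi_j\circ\pi_j$ on $\tilde X_j$ to $\phi_j$ on $X_j$ (which requires choosing representatives with a common past) — both of which the paper leaves implicit.
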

\begin{proof}
The proof is a modification of the proof of \cite[Lemma 1.6]{Bow}.
For each $j$ and $t$ take a point $a^{(j,t)}=(a^{(j,t)}_{j+k})_k\in \tilde X_j$ such that $a_{j}^{(j,t)}=t$. For $y=(y_{j+k})_k\in \tilde X_j$ define
$$
r_j(y)=(y^*_{j+k})_{k\in\bbZ}\in \tilde X_j
$$
where $y^*_{j+k}=y_{j+k}$ if $k\geq 0$ and $y^*_{j+k}=a^{(j,y_{j})}_{j+k}$ if $k\leq 0$. Let
\begin{equation}
\label{DefU}
u_j(y)=\sum_{k=0}^\infty\left(\psi_{j+k}(\tilde T_j^k y)-\psi_{j+k}(\tilde T_j^k r_j(y))\right).
\end{equation}
To see that  the RHS of \eqref{DefU}  converges note that since $y$ and $r_j(y)$ have the same values at the coordinates indexed by $j\!+\!k$ for $k\!\!\geq\!\!0$ (i.e. with  indexes  to the right of $j$) we have
\begin{equation}\label{Pass 1}
    |\psi_{j+k}(\tilde T_j^k y)-\psi_{j+k}(\tilde T_j^k r_j(y))|\leq v_\al(\psi_{j+k})2^{-\al k}
\end{equation}
and so 
$\DS
\sup|u_j|\leq \sum_{k=0}^\infty v_\al(\psi_{j+k})2^{-\al k}.
$
Thus, $\DS \sup_j\sup_{\tX_j} |u_j|<\infty$. Moreover, 
if $v_{\al}(\psi_j)\to 0$ then $\DS \sup_{\tX_j} |u_j|\to 0$.

Next, we claim that 
the function $\psi_j-u_j+u_{j+1}\circ\sig_j$ depends only on the coordinates indexes by $j+k$ for $k\geq 0$ (so it has the form $\phi_j\circ \pi_j$).
Indeed,
$$
u_j-u_{j+1}\circ\tilde T_j=\psi_{j}+\sum_{k=0}^\infty\left(\tilde T_{j+k}\circ \tilde T_j^k\circ r_j-\psi_{j+k+1}\circ \sig_{j+1}^k\circ r_{j+1}\circ \sig_j\right).
$$
Note the the sum on the above right hand side depends only on the coordinates indexed by $j+k$ for $k\geq 0$, and the claim follows.

In view of the above, in order to complete the proof of the lemma it is enough to obtain appropriate bounds on the H\"older constants of the functions $u_j$ (corresponding to the exponent $\al/2$). 
For that end, let $y$ and $y'$  in $\tilde X_j$ be such that $y_{j+k}=y'_{j+k}$ for every $|k|\leq n$ for some $n>0$. 
Using \eqref{Pass 1} we have 
$$
|u_j(y)-u_j(y')|\leq \sum_{k=0}^{[n/2]}\left|\psi_{j+k}(\tilde T_j^k y)-\psi_{j+k}(\tilde T_j^k y')\right|+
\sum_{k=0}^{[n/2]}\left|\psi_{j+k}(\tilde T_j^k r_j(y))-\psi_{j+k}(\tilde T_j^k r_j(y'))\right|
$$
$$
+2\sum_{k>[n/2]}v_\al(\psi_{j+k})2^{-\al k}:=I_1+I_2+I_3.
$$
To show that $u_j$ is H\"older continuous with exponent $\al/2$ (uniformly in $j$) we use that 
$$
\left|\psi_{j+k}(\tilde T_j^k y)-\psi_{j+k}(\tilde T_j^k y')\right|\leq \sup_sv_\al(\psi_s)2^{-(n-k)\al}
$$
and similarly with $r_j(y)$ and $r_j(y')$ instead of $y$ and $y'$, respectively.  So
$I_1+I_2\leq C2^{-n\al/2}$.
Moreover, we note that $I_3\leq C2^{-n\al/2}$. 

 Next, if $\|\psi_j\|_{\al}\to 0$ then 
 $\DS
I_1+I_2\leq\sum_{k=0}^{[n/2]}v_\al(\psi_{j+k})2^{-(n-k)\al}\leq \ve_j2^{-n\al/2}
 $
 with $\ve_j\to 0$. %Moreover, if $\|\psi_j\|_\al=O(j^{-\beta})$ then $C_j=O(j^{-\beta})$.
 Similarly, $I_3\leq \ve_j 2^{n\al/2}$ for $\ve_j$ with the same properties.
 \end{proof}

\subsection{Sequential Gibbs measures for one sided shifts}\label{g sec}

\begin{definition}
Let $\phi_j:X_j\to\bbR$ be a sequences of functions such that $\DS \sup_j \|\phi_j\|_\al<\infty$ for some $\al\in(0,1]$.  We say that a sequence of probability measures $\mu_j$ on $X_j$ is a {\em sequential Gibbs family for $(\phi_j)$} if: 

\vskip0.2cm
(i) For all $j$ we have $(T_j)_*\mu_j=\mu_{j+1}$
\vskip0.2cm

(ii) There is a constant $C>1$ and a sequence of positive numbers $(\la_j)$ such that for all $j$, every point $(x_{j+k})_k$ in $X_j$ and every $r>0$ we have 
$$
C^{-1}e^{S_{j,r}\phi(x)}/\la_{j,r}\leq \gamma_j([x_j,...,x_{j+r-1}])\leq Ce^{S_{j,r}\phi(x)}/\la_{j,r}
$$
where 
$\DS
S_{j,r}\phi(x)=\sum_{s=0}^{r-1}\phi_{j+s}(T_{j}^s x)
$
and 
$\DS
\la_{j,r}=\prod_{k=j}^{j+r-1}\la_k.
$
\end{definition}

We say that two sequences $(\al_j)$ and $(\be_j)$
  of positive numbers are \textit{equivalent} if there is a sequence $(\zeta_j)$ of positive numbers which is bounded and bounded away from $0$ such that for all $j$ we have $\al_j=\zeta_j\beta_j/\zeta_{j+1}$.

  We need the following simple result.
\begin{lemma}\label{EquivLemma}
  Two positive sequences $(\al_j)$ and $(\beta_j)$ are equivalent if and only if there is a constant $C>0$ such that for all $j$ and $n$ we have 
  $\DS
  C\leq \al_{j,n}/\beta_{j,n}\leq C^{-1}.
  $
  \begin{proof}
      It is clear that there exists such a constant $C$ if  $(\al_j)$ and $(\be_j)$ are equivalent. On other hand, suppose that such a constant exists. Then, for $j\geq 0$ define 
      $
\zeta_j=\beta_{0,j}/\al_{0,j}.
      $
      Clearly,
      $\DS
\al_{j}/\beta_{j}=\zeta_{j}/\zeta_{j+1}.
      $
      For $j<0$ we define similarly 
      $
\zeta_j=\al_{j+1,|j|}/\beta_{j+1,|j|}.
      $
  \end{proof}
  \end{lemma}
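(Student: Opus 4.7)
The plan is to prove both directions by exploiting the telescoping structure of products of ratios, which is exactly the content of the definition of equivalence.

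For the forward implication, I would assume $\alpha_j = \zeta_j \beta_j / \zeta_{j+1}$ with $\zeta_j$ bounded and bounded away from $0$, and simply compute
\[
\frac{\alpha_{j,n}}{\beta_{j,n}} = \prod_{k=j}^{j+n-1} \frac{\alpha_k}{\beta_k} = \prod_{k=j}^{j+n-1} \frac{\zeta_k}{\zeta_{k+1}} = \frac{\zeta_j}{\zeta_{j+n}}.
\]
Since $(\zeta_j)$ is bounded and bounded away from $0$, this ratio lies in a fixed compact subinterval of $(0,\infty)$, uniformly in $j$ and $n$, which gives the required constant $C$.

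For the reverse implication, the natural candidate is to reverse-engineer $\zeta$ from partial products. For $j \geq 0$, set $\zeta_j = \beta_{0,j}/\alpha_{0,j}$ (with the convention $\beta_{0,0}=\alpha_{0,0}=1$ so that $\zeta_0=1$). A direct calculation gives $\zeta_j/\zeta_{j+1} = \alpha_j/\beta_j$, so the equivariance identity $\alpha_j = \zeta_j \beta_j / \zeta_{j+1}$ holds by construction. The hypothesis $C \leq \alpha_{0,j}/\beta_{0,j} \leq C^{-1}$ is precisely what ensures $\zeta_j$ is bounded and bounded away from $0$ on the nonnegative half. For negative indices the formula $\zeta_j = \beta_{0,j}/\alpha_{0,j}$ no longer makes sense as written, and one should instead set $\zeta_j = \alpha_{j,|j|}/\beta_{j,|j|}$ (a product running from $j$ up to $0$); the same telescoping check confirms the equivariance relation and the boundedness follows again from the uniform two-sided bound on $\alpha_{j,n}/\beta_{j,n}$ applied with $n = |j|$.

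There is no real obstacle here — the lemma is essentially a tautology about multiplicative telescoping, and the only thing to be careful about is the bookkeeping for negative indices. The whole argument is a few lines once one writes down the right candidate $\zeta_j$.
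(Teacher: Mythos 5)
Your proof is correct and follows the same telescoping/partial-product construction as the paper's; in fact, your formula $\zeta_j=\alpha_{j,|j|}/\beta_{j,|j|}$ for $j<0$ quietly corrects a small off-by-one slip in the paper, which writes $\zeta_j=\alpha_{j+1,|j|}/\beta_{j+1,|j|}$ and would then give $\zeta_j/\zeta_{j+1}=\alpha_{j+1}/\beta_{j+1}$ rather than $\alpha_j/\beta_j$. The only minor imprecision is your parenthetical: with the paper's convention $\alpha_{j,|j|}=\prod_{k=j}^{-1}\alpha_k$, the product runs from $j$ up to $-1$, not up to $0$.
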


\begin{theorem}
   For every sequence of functions $\phi_j:X_j\to\bbR$ such that $\DS \sup_j\|\phi_j\|_\al<\infty$ for some $\al\in(0,1]$ there exists  unique Gibbs measures $\mu_j$. 
   Moreover
     the sequence $(\la_j)$ is unique up to equivalence. 
     \end{theorem}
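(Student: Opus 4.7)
The plan is to build the Gibbs data $(\mu_j,\lambda_j)$ from the sequential transfer operators $\mathbf{L}_j g(x) = \sum_{T_j y = x} e^{\phi_j(y)} g(y)$ and then read off uniqueness from the Gibbs inequality itself, together with Lemma~\ref{EquivLemma}. A topologically mixing one-sided SFT can always be extended to a two-sided one by appending admissible transition matrices at negative indices that preserve uniform topological mixing, so the machinery of Section~\ref{Gibbs} directly supplies positive H\"older functions $\bar h_j$ on $X_j$, probability measures $\nu_j$, and positive numbers $\lambda_j$ satisfying $\mathbf{L}_j \bar h_j = \lambda_j \bar h_{j+1}$, $\mathbf{L}_j^* \nu_{j+1} = \lambda_j \nu_j$, $\nu_j(\bar h_j)=1$, together with uniform bounds $0<\inf \bar h_j \le \sup\|\bar h_j\|_\al <\infty$. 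Setting $\mu_j := \bar h_j\,d\nu_j$, the duality relations immediately yield $(T_j)_*\mu_j = \mu_{j+1}$.

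To verify the Gibbs property on a cylinder $C=[x_j,\ldots,x_{j+r-1}]$, observe that $T_j^r$ restricts to a bijection from $C$ onto a full-measure subset of $X_{j+r}$, so each $y$ in its image has a unique preimage $z(y)\in C$ and
\begin{equation*}
\mu_j(C) = \int_C \bar h_j\, d\nu_j = \lambda_{j,r}^{-1}\int e^{S_{j,r}\phi(z(y))}\,\bar h_j(z(y))\,d\nu_{j+r}(y),
\end{equation*}
using that $\nu_j = \lambda_{j,r}^{-1}(\mathbf{L}_j^r)^*\nu_{j+r}$. Uniform H\"older control of $(\phi_j)$ together with the geometric contraction of preimages inside a cylinder gives the standard bounded-distortion estimate $|S_{j,r}\phi(z(y)) - S_{j,r}\phi(x)|\le D$ for every $x\in C$ and for some $D$ independent of $j,r,C$; combined with the uniform two-sided bounds on $\bar h_j$ and the fact that $\nu_{j+r}$ is a probability measure, this is exactly the Gibbs inequality with a uniform constant.

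For uniqueness, let $(\tilde\mu_j,\tilde\lambda_j)$ be another Gibbs family. Dividing the two Gibbs bounds on a cylinder $C$ of length $r$ yields $\tilde\mu_j(C)/\mu_j(C)\asymp \lambda_{j,r}/\tilde\lambda_{j,r}$ uniformly in $C$; summing over all cylinders of length $r$ makes both marginal sums equal to $1$, so $\lambda_{j,r}/\tilde\lambda_{j,r}$ is bounded above and below uniformly in $j,r$, and Lemma~\ref{EquivLemma} gives $(\lambda_j)\sim(\tilde\lambda_j)$. The same cylinder ratio shows that $g_j := d\tilde\mu_j/d\mu_j$ is bounded, and refining the bounded-distortion step into a H\"older modulus of continuity for $g_j$ (by comparing Gibbs masses of nested sub-cylinders) shows it is uniformly H\"older. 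After absorbing the equivalence into the choice $\tilde\lambda_j = \lambda_j$, the density $g_j$ must satisfy $\mathbf{L}_j(g_j \bar h_j) = \lambda_j g_{j+1}\bar h_{j+1}$, i.e.\ it is a uniformly H\"older eigenfunction of the normalized transfer operator. I expect the main obstacle to be this last step: deducing $g_j\equiv 1$ from the asymptotic uniqueness of eigenfunctions supplied by Theorem~\ref{RPF 0}(ii), which will require exploiting the normalization $\nu_j(g_j \bar h_j)=1=\nu_j(\bar h_j)$ at every $j$ to sharpen the asymptotic statement into genuine equality for each $j$.
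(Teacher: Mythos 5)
Your existence construction, the verification of the Gibbs inequality via bounded distortion, and the uniqueness of $(\la_j)$ up to equivalence (divide the two Gibbs bounds, sum over length-$n$ cylinders, invoke Lemma~\ref{EquivLemma}) all match the paper. The problem is in the uniqueness of $\mu_j$, where you have two gaps, one of which you flag yourself and one of which you do not.

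The unflagged gap is the claim that $g_j=d\tilde\mu_j/d\mu_j$ is uniformly H\"older ``by comparing Gibbs masses of nested sub-cylinders.'' The Gibbs property pins $\tilde\mu_j([x]_r)/\mu_j([x]_r)$ between $(C\tilde C)^{-1}$ and $C\tilde C$ uniformly in $r$, but the multiplicative constant does \emph{not} improve as $r\to\infty$, so nesting gives you no modulus of continuity for the pointwise limit $g_j$. You cannot bootstrap regularity out of the eigenfunction equation $\mathbf{L}_j(g_jh_j)=\la_jg_{j+1}h_{j+1}$ either, since the shift transfer operator merely preserves (and contracts) a given H\"older modulus but does not regularize a bounded function. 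This matters because the exponential, uniform-in-starting-index rate in Theorem~\ref{RPF 0}(ii) --- which is what lets you run the argument from $j-N$ and send $N\to\infty$ --- requires BV/H\"older regularity of the competing density; the bare $L^1$ version has no rate and cannot be pushed back in time. So the first gap blocks the very device you hoped would close the second, acknowledged gap.

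The paper circumvents both problems by never invoking regularity of the density. Writing $p_j=d\tilde\mu_j/d\mu_j$ (merely bounded, by the Gibbs bounds), it lifts everything to the two-sided shift, where $\tilde T_j$ is invertible, and uses the equivariance of both $\mu_j$ and $\tilde\mu_j$ together with invertibility to deduce the pointwise identity $p_{j+1}\circ\tilde T_j=p_j$ $\mu_j$-a.s. Iterating gives $p_{j+n}\circ T_j^n=p_j$, which plugged into the duality yields $(\cL_j^n)^*\tilde\mu_{j+n}=\tilde\mu_j$ exactly; combined with the dual convergence $(\cL_j^n)^*\tilde\mu_{j+n}\to\mu_j$ coming from \eqref{ExpConvSTF} this forces $\tilde\mu_j=\mu_j$. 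Note that on the one-sided shift equivariance only gives $\cL_jp_j=p_{j+1}$, which is strictly weaker than $p_{j+1}\circ T_j=p_j$ (the transfer operator has a nontrivial kernel); the invertible lift is what upgrades the functional identity to the pointwise one, and that upgrade is the key idea your argument is missing.
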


 We will see below that the measures $\mu_j$ 
can be  expressed in terms of the strong limits of the normalized operators $\cL_j^n$ associated with the sequence $(\phi_j)$ defined in \eqref{Norm TO}.  The rate of convergence is exponential, see \eqref{ExpConvSTF} below.

\begin{proof}
    Existence of $\mu_j$ was proven in \cite{Nonlin}. Let us recall the main arguments. First, we define the operator $L_j$ 
   which maps a function $g:X_j\to \bbR$ to a function $L_j g:X_{j+1}\to\bbR$ given  by
    $$
L_j g(x)=\sum_{T_j y=x}e^{\phi_j(y)}g(y).
    $$
Then, in \cite{Nonlin} it was shown that there is a positive sequence $(\la_j)$ which is bounded and bounded away from the origin, a sequence of positive functions $h_j$ with uniformly bounded H\"older norms (corresponding to the same exponent $\al$ of $\phi_j$), which are also uniformly bounded below by a positive constant, and a sequence of probability measures $\nu_j$ on $X_j$ such that $\nu_j(h_j)\!\!=\!\!1$ and  for all $j,n$ and a H\"older continuous function $g$ on $X_j$,
\begin{equation}\label{ExpConvSTF0}
    \|(\la_{j,n})^{-1}L_j^n g-\nu_j(g)h_{j+n}\|_{\al}\leq C_0\|g\|_\al \del^n
\end{equation}
for some constants $C>0$ and $\del\in(0,1)$ which do not depend on $j,n$ and $g$. Here
$$
L_{j}^n=L_{j+n-1}\circ\cdots\circ L_{j+1}\circ L_j.
$$
Then the Gibbs measures constructed in \cite{Nonlin} are given by $\mu_j=h_j d\nu_j$. Moreover, the operator $\cL_j$ given by 
\begin{equation}\label{Norm TO}
\cL_jg(x)=\frac{L_j(gh_j)}{\la_jh_{j+1}}
\end{equation}
is the dual of the Koopman operator corresponding to $T_j$ with respect to $\mu_j$ and $\mu_{j+1}$ and (because of \eqref{ExpConvSTF0}),
\begin{equation}\label{ExpConvSTF}
    \|\cL_j^n g-\mu_j(g)\textbf{1} \|_{\al}\leq C\|g\|_\al \del^n
\end{equation}
for some $C>0$. In the derivation of \eqref{ExpConvSTF} we used that $\DS \sup_j\|1/h_j\|_\al<\infty$.

Now, let us suppose that there is another Gibbs measure $\tilde\mu_j$ associated with a sequence $\tilde \la_j$. Namely, $(T_j)_*\tilde\mu_j=\tilde \mu_{j+1}$ and  for every point $x\in X_j$,
$$
(\tilde C)^{-1}e^{S_{j,r}\phi(x)}/\tilde \la_{j,r}\leq \tilde\mu_j([x_{j},x_{j+1},...,x_{j+r-1}]])\leq \tilde Ce^{S_{j,r}\phi(x)}/\tilde\la_{j,r}
$$
for some constant $\tilde C>0$. Let us first show that the sequences $(\la_j)$ and $(\tilde\la_j)$ are equivalent. Indeed, for all $j$ and $n$,
 by the Gibbs property of both $\mu_j$ and $\tilde\mu_j$,
$$
C^{-1}\sum_{[y_j,...,y_{j+n-1}]}e^{S_{j,n}\phi(x_y)}/\la_{j,n}\leq \sum_{[y_j,...,y_{j+n-1}]}\mu_j([y_j,...,y_{j+n-1}])\leq C\sum_{[y_j,...,y_{j+n-1}]}e^{S_{j,n}\phi (x_y)}/\la_{j,n}
$$
and 
$$
(\tilde C)^{-1}\sum_{[y_j,...,y_{j+n-1}]}e^{S_{j,n}\phi (x_y)}/\tilde \la_{j,n}\leq \sum_{[y_j,...,y_{j+n-1}]}\tilde \mu_j([y_j,...,y_{j+n-1}])\leq \tilde C\sum_{[y_j,...,y_{j+n-1}]}e^{S_{j,n}\phi (x_y)}/\tilde\la_{j,n}
$$
where the point $x_y$ is an arbitrary point inside the cylinder $[y_j,...,y_{j+n-1}]$.
On the other hand,
$$
1=\sum_{[y_j,...,y_{j+n-1}]}\mu_j([y_j,...,y_{j+n-1}])=\sum_{[y_j,...,y_{j+n-1}]}\tilde\mu_j([y_j,...,y_{j+n-1}]).
$$
Hence,
$$
(C\tilde C)^{-1}\leq \frac{\la_{j,n}}{\tilde \la_{j,n}}\leq C\tilde C
$$
and the equivalence of the sequences $(\la_j)$ and $(\tilde \la_j)$ follows from Lemma \ref{EquivLemma}.

Next, let us show that $\tilde\mu_j=\mu_j$ for all $j$. 
We first need the following result. Let $H_{j,\al}$ denote the space of H\"older continuous functions on $X_j$ with the H\"older  exponent $\al$, equipped with the usual $\al$-H\"older norm $\|\cdot\|_\al$. Let $H_{j,\al}^*$ denote its dual, and let us denote by $\|\cdot\|_\al$ the (operator) norm on the dual, as well. Let  $\cL_j^*:H_{j+1,\al}^*\to H_{j,\al}^*$  be the dual operators of $\cL_j$. Then it follows from \eqref{ExpConvSTF} that for every $j,n$ and $\ka_{j+n}\in H_{j+n,\al}^*$, 
\begin{equation}\label{DualExpConv}
 \|(\cL_j^n)^*\ka_{j+n}-\ka_{j+n}(\textbf{1}_{j+n})\mu_{j}\|_\al\leq C\|\ka_{j+n}\|_\al\del^n   
\end{equation}
where $\textbf{1}_k$ is the function on $X_k$ which takes constant value $1$.
In particular, if we take $\ka_{j+n}=\tilde\mu_{j+n}$ then, since $\tilde\mu_{j+n}\textbf{1}_{j+n}=1$, we see that 
\begin{equation}\label{Converge}
(\cL_j^n)^*\tilde\mu_{j+n}\to \mu_j\,\text{ as }\, n\to\infty.
\end{equation}

Next,  we claim that 
\begin{equation}\label{NWCT1}
(\cL_j^n)^*\tilde \mu_{j+n}=\tilde \mu_j.
\end{equation}
Once the claim is proven, combining it with \eqref{Converge} we obtain $\tilde \mu_j=\mu_j$, as required. 

Now, because of the Gibbs properties of both $\mu_j$ and $\tilde\mu_j$ and since $(\la_{j})$ and $(\tilde\la_j)$ are equivalent we see that
$\tilde \mu_j\ll \mu_j$. Let $ p_j=\frac{d\tilde\mu_j}{d\mu_j}$
 denote the
corresponding Radon Nikodym derivative.
In order to prove \eqref{NWCT1},  we will show that for every $j$,
\begin{equation}\label{p dens}
   p_{j+1}\circ T_j=p_j,\quad\mu_j-\text{a.s.}
\end{equation}
 \eqref{p dens} implies that $p_{j+n}\circ T_j^n=p_j$.
Therefore, for every bounded measurable function $g:X_j\to\bbR$,
$$
(\cL_j^n)^*\tilde\mu_{j+n}(g)=
\tilde\mu_{j+n}(\cL_{j}^n g)=\mu_{j+n}(p_{j+n}\cL_{j}^n g)=\mu_j(g\cdot (p_{j+n}\circ T_j^n))=\mu_j(g\cdot p_j)=\tilde\mu_j(g)
$$
and \eqref{NWCT1} follows (note that in the third equality above we have used that $\cL_j^n$ is the transfer operator of $T_j^n$ with respect to $\mu_j$ and $\mu_{j+n}$).

To complete the proof of the theorem we need to prove \eqref{p dens}.  
 By identifying both measures $\mu_j$ and $\tilde \mu_j$ as measure on the two sided shift $\tilde X_j$ (see next section), we can assume that $\mu_j$ and $\tilde \mu_j$ are mapped by $\tilde T_j$ to $\mu_{j+1}$ and $\tilde\mu_{j+1}$, respectively. In what follows we will abuse the notation and denote the lifted measures by $\mu_j$ and $\tilde \mu_j$. The identification of the function $p_j$ to $\tilde X_j$ will also be denoted by $p_j$, namely, we  write $p_j(...,x_{j-1},x_{j},x_{j+1},...)=p_j(x_{j},x_{j+1},...)$.
Now, since
$\DS
(\tilde T_j)_*\mu_j=\mu_{j+1}$
and
$\DS (\tilde T_j)_*\tilde \mu_j=\tilde\mu_{j+1}
$
for every bounded measurable function $f:\tilde X_{j+1}\to\bbR$ we have 
$$
\mu_j(p_j(f\circ \tilde T_j))=\tilde\mu_j(f\circ \tilde T_j)=\tilde\mu_{j+1}(f)=\mu_{j+1}(p_{j+1}f)=
\mu_j((p_{j+1}\circ \tilde T_j)\cdot (f\circ \tilde T_{j})).
$$
Since $T_j$ is invertible we can replace $f\circ T_j$ with a general bounded measurable function $g$ on $X_j$ and \eqref{p dens} follows.
\end{proof}

\subsection{Gibbs measure for two sided shifts}

\begin{definition}
Let $\psi_j:\tilde X_j\to\bbR$ be a sequences of functions such that 
$\DS \sup_j\|\psi_j\|_\al<\infty$ for some $\al\in(0,1]$.  We say that a sequence of probability measures $\gamma_j$ on $\tilde X_j$ is a Gibbs family for $(\psi_j)$ if: 

\vskip0.2cm
(i) For all $j$ we have $(\tilde T_j)_*\gamma_j=\gamma_{j+1}$.
\vskip0.1cm

(ii) There is a constant $C>1$ and a sequence of positive numbers $(\la_j)$ so that for every point $(y_{j+k})_k$ in $\tilde X_j$ we have 
$$
C^{-1}e^{S_{j,r}\psi(y)}\la_{j,r}\leq \gamma_j([y_{j},y_{j+1},...,y_{j+r-1}]])\leq Ce^{S_{j,r}\psi(y)}\la_{j,r}
$$
where 
$\DS
S_{j,r}\psi(y)=\sum_{s=0}^{r-1}\phi_{j+s}(\tilde T_{j}^s y)
$
and 
$
\DS \la_{j,r}=\prod_{k=j}^{j+r-1}\la_k.
$
\end{definition}

\begin{proposition}\label{Gibbs two sided}
    Every uniformly H\"older continuous sequence $\psi_j$ (with respect to some exponent $\al\in(0,1]$) admits a unique sequence of Gibbs measures $\gamma_j$. 
\end{proposition}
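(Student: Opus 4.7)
The plan is to reduce the two-sided problem to the one-sided one via Lemma~\ref{Sinai}, and to construct $\gamma_j$ as the natural extension of the one-sided Gibbs measures. First I would apply Lemma~\ref{Sinai} to $(\psi_j)$ to produce uniformly H\"older continuous $u_j:\tilde X_j\to\bbR$ (with exponent $\al/2$) and $\phi_j:X_j\to\bbR$ (with the same exponent) satisfying
$$\psi_j = u_j - u_{j+1}\circ \tilde T_j + \phi_j\circ\pi_j.$$
Iterating gives, for every $y\in\tilde X_j$,
$$S_{j,r}\psi(y) = S_{j,r}\phi(\pi_j y) + u_j(y) - u_{j+r}(\tilde T_j^r y),$$
and since $\sup_j\|u_j\|_\infty<\infty$, the quantities $e^{S_{j,r}\psi(y)}$ and $e^{S_{j,r}\phi(\pi_j y)}$ are uniformly comparable. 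Hence the two-sided Gibbs property for $\psi_j$ on a forward cylinder $C=\pi_j^{-1}(C')$ is equivalent, up to uniform constants, to the one-sided Gibbs property for $\phi_j$ on $C'$, and the associated normalizing sequences will be the same.

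For existence, I would invoke the one-sided theorem of \S\ref{g sec} to obtain unique Gibbs measures $\mu_j$ on $X_j$ for the potentials $\phi_j$ with normalizing sequence $(\la_j)$. The two-sided space $\tilde X_j$ is naturally the inverse limit of the one-sided spaces $X_{j-n}$ along the shifts $T_{j-n-1}$, via the projections $\pi_j^{(n)}:\tilde X_j\to X_{j-n}$ that discard coordinates at positions strictly below $j-n$ (so $\pi_j^{(0)}=\pi_j$). Because $(T_{j-n-1})_*\mu_{j-n-1}=\mu_{j-n}$, the family $(\mu_{j-n})_{n\geq 0}$ is Kolmogorov-consistent under these projections, so Kolmogorov's extension theorem supplies a unique Borel probability measure $\gamma_j$ on $\tilde X_j$ with $(\pi_j^{(n)})_*\gamma_j=\mu_{j-n}$ for all $n\geq 0$. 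The commutation $\pi_{j+1}^{(n-1)}\circ\tilde T_j=\pi_j^{(n)}$ together with uniqueness of the Kolmogorov extension then yields $(\tilde T_j)_*\gamma_j=\gamma_{j+1}$. For a forward cylinder $C=[y_j,\ldots,y_{j+r-1}]\subset\tilde X_j$, one has $\gamma_j(C)=\mu_j(\pi_j(C))$, so the one-sided Gibbs bound for $\mu_j$ combined with the cohomological identity of the previous paragraph delivers the two-sided Gibbs bound for $\gamma_j$, with the same sequence $(\la_j)$.

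For uniqueness, let $\tilde\gamma_j$ be another Gibbs family with constants $(\tilde\la_j)$. On forward cylinders the cohomological identity and the boundedness of $u_j$ imply that $(\pi_j)_*\tilde\gamma_j$ is a one-sided Gibbs family for $\phi_j$ with normalization $(\tilde\la_j)$; the uniqueness in \S\ref{g sec} (together with Lemma~\ref{EquivLemma}) then forces $(\pi_j)_*\tilde\gamma_j=\mu_j$ and $(\tilde\la_j)\sim(\la_j)$. To pin down $\tilde\gamma_j$ itself, for any finite cylinder $C\subset\tilde X_j$ supported at coordinate positions $j-n,\ldots,j+m$, applying the bijection $\tilde T_{j-n}^n$ turns $(\tilde T_{j-n}^n)^{-1}(C)$ into a \emph{forward} cylinder in $\tilde X_{j-n}$. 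Invariance of $\tilde\gamma_\bullet$ and projection via $\pi_{j-n}$ give
$$\tilde\gamma_j(C)=\tilde\gamma_{j-n}\bigl((\tilde T_{j-n}^n)^{-1}(C)\bigr)=\mu_{j-n}\bigl(\pi_{j-n}((\tilde T_{j-n}^n)^{-1}(C))\bigr),$$
and the same computation applied to $\gamma_\bullet$ produces the identical value. Hence $\tilde\gamma_j=\gamma_j$ on the algebra of finite cylinders, and therefore everywhere.

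The main obstacle will be setting up the natural-extension construction cleanly in the non-autonomous setting: one must carefully track how the projections $\pi_j^{(n)}$ interact with the shift maps $\tilde T_j$ and with the transfer relations $(T_{j-n-1})_*\mu_{j-n-1}=\mu_{j-n}$, in order to apply Kolmogorov's theorem and to justify the shift-invariance of the resulting $\gamma_j$. Once this bookkeeping is in place, both the Gibbs inequalities and the uniqueness follow essentially by pushing forward through $\pi_j$ and quoting Lemma~\ref{Sinai} and the one-sided uniqueness of \S\ref{g sec}.
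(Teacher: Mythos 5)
Your proposal follows essentially the same route as the paper: apply Lemma~\ref{Sinai} to replace $\psi_j$ by a cohomologous one-sided potential $\phi_j\circ\pi_j$, invoke the one-sided theory of \S\ref{g sec} to obtain $\mu_j$ and $(\la_j)$, extend to $\tilde X_j$ via Kolmogorov consistency along the backward projections, read off the two-sided Gibbs bound from the cohomological identity and boundedness of $u_j$, and for uniqueness observe that another Gibbs family projects to the same one-sided Gibbs measures at every time $j-n$, which together with shift-equivariance pins down the measure on all finite cylinders. The paper states the Kolmogorov step and the projection argument more tersely, but the construction, the key lemma, and the uniqueness mechanism are identical.
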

\begin{proof}
 Let $\phi_j$ be the functions obtained in Lemma \ref{Sinai}. Let $\mu_j$ be the unique sequential Gibbs measure on the one sided shift space $X_j$ corresponding to the functions $(\phi_j)$. 
    Since 
$(T_{j-k})_*\mu_{j-k}=\mu_j$ using Kolmogorov's extension theorem we can extend $\mu_j$ to the space of two sided sequences $\tilde X_j$. Let us denote this measure by $\gamma_j$. It is clear that $(\tilde T_j)_*\gamma_j=\gamma_{j+1}$. 
To prove the second condition in the definition of a sequential Gibbs measure we notice that for every point $x\in \tilde X_j$,
$$
\gamma_j([x_{j},...,x_{j+r-1}])=\mu_j([x_{j},...,x_{j+r-1}])\asymp \la_{j,n}e^{S_{j,r}\phi\circ \pi_j(x)}
$$
where $(\la_j)$ is the sequence associated with the Gibbs measure $\mu_j$.
Now, by Lemma \ref{Sinai} we have that 
$$
e^{S_{j,r}\phi\circ \pi_j(x)}\asymp e^{S_{j,r}\psi(x)}
$$
and hence $\gamma_j$ is a sequential Gibbs measure corresponding to the functions $\psi_j$, and $(\la_j)$ is the corresponding associated sequence.

To prove uniqueness of $\gamma_j$ constructed above, we note that if $\tilde \gamma_j$ is another Gibbs measure then the measure $\tilde \mu_j$ defined by the restriction of $\gamma_j$ to the $\sig$-algebra generated by the coordinates with indexes $j+k, k\geq 0$ projects\footnote{By ``projects" we mean that if the restriction is denoted by $\gamma_j^*$ then $(\pi_j)_*\gamma_j^*=\mu_j$.} to a Gibbs measure on $X_j$. Hence $\tilde \mu_j=\mu_j$, where $\mu_j$ is the unique sequential Gibbs measure corresponding to the functions $\phi_j$. On the other hand, since $(\tilde T_j)_*\tilde \gamma_j=\tilde\gamma_{j+1}$, for every $r>0$ the restriction of $\tilde\gamma_j$ to the $\sig$-algebra generated by the coordinates indexed by $j+k$ for $k\geq -r$ coincides with the restriction of $\tilde\gamma_{j-r}$ to the  $\sig$-algebra generated by the coordinates indexed by $j-r+k$ for $k\geq 0$, which, as explained above, projects on $X_{j-r}$ to  $\mu_{j-r}$. Hence, for every $r$ the measures $\gamma_j$ and $\tilde\gamma_{j}$ agree on the $\sig$-algebra generated by the coordinates indexed by $j+k$ for $k\geq -r$ (as both coincide with $\mu_{j-r}$).  By taking $r\to-\infty$ we conclude that 
$
\tilde \gamma_j=\gamma_j.
$
\end{proof}

%\subsection{Gibbs measures for Markov maps on the interval}\label{Markov Appendix}\footnote{\color{purple}I moved this section to the LLT paper as well.}

\section{Small perturbations of hyperbolic maps}\label{App C}
\subsection{Hyperbolic sets.}
Let $M$ be a compact $C^2$ Riemannian
manifold  equipped with its Borel $\sig$-algebra $\cB$. Let us also denote by $\mathsf{d}(\cdot,\cdot)$ the induced metric.
Let $T:M\to M$ be a $C^2$ diffeomorphism.

\begin{definition}
A compact $T$ invariant subset $\Lambda\subset M$ is called a {\em hyperbolic set}  for $T$   if  there exists an open set $V$ with compact closure, constants $\la\in(0,1)$ and $\al_0,A_0,B_0>0$  and subbudnles $\Gamma^s$ and $\Gamma^u$ of the tangent bundle $T\Lambda$ such that:
\vskip0.1cm
(i) The set $\{x\in M: \text{dist}(x,\Lambda)<\al_0\}$ is contained in a open subset $U$ of $V$ such that $TU\subset V$ and $T|_{U}$ is a diffeomorphism with $\DS \sup_{x\in U}\max(\|D_xT\|, \|D_xT^{-1}\|)\leq A_0$;
\vskip0.1cm
(ii) $T\Lambda=\Gamma^s\oplus\Gamma^u$, $DT(\Gamma^s)=\Gamma^s$, $DT(\Gamma^u)=\Gamma^u$ and the minimal angle between $\Gamma^s$ and $\Gamma^u$ is bounded below by $\al$;
\vskip0.1cm
(iii) For all $n\in\bbN$ we have 
$$
\|D_xT^n v\|\leq B_0\la^n\|v\|\,\,\,\forall \,v\in \Gamma_x^s\quad\text{and}
\quad \|D_xT^{-n} v\|\leq B_0\la^{n}\|v\|\,\,\,\, \forall \, v\in \Gamma_x^u.
$$
\end{definition}

\begin{definition}

A hyperbolic set is called 

(i) {\em locally maximal} if the set $U$ above could be chosen 
so that $\DS \Lambda=\bigcap_{n\in \mathbb{Z}} T^n U$ (that is, $\Lambda$ is the largest hyperbolic set contained in $U$);

(ii) hyperbolic attractor, if in addition, $U$ could be chosen so that $TU\subset U$
 (in the case when $M=\Lambda$ $T$ is called {\em Anosov}). 

We say that $\Lambda$ is the {\em basic hyperbolic set} if it is infinite locally maximal hyperbolic 
set such that $T|_\Lambda$ is topologically transitive.
\end{definition}

\vskip-2mm
 Henceforth, we assume that  $\Lambda$ is topologically mixing\footnote
{Topological mixing assumption can be made without a loss
of generality. Indeed (see e.g. \cite[Chapter 8]{Sh}) 
an arbitrary basic set $\Lambda$ can be decomposed
as $\DS \Lambda=\bigcup_{j=1}^p \Lambda_j$ so that 
$T\Lambda_j=\Lambda_{j+1\;\;mod\;\;p}$ where $\Lambda_j$ are topologically mixing
basic hyperbolic sets for $T^p.$ Then we could apply the results discussed below
to $(T^p, \Lambda_j).$}
 basic 
hyperbolic set.
\\

A powerful tool for studying hyperbolic maps is given by a symbolic representations. 
Namely,  every topologically mixing basic set $\Lambda$ admits a Markov partition (see \cite[Chapter 10]{Sh})
 which gives raise to a 
semiconjugacy $\pi:\Sigma\to M$ where $\Sigma$ is a
topologically mixing subshift of a finite type.

\subsection{Structural stability}
Now, consider a sequence of maps $\cT=(T_j:M\to M)_{j\in\bbZ}$. 
 Denote by $\mathsf{d}_1(f,g)$ the $C^1$ distance between $f$ and $g$.
We have the following result.

\begin{theorem}
\label{ThStability}
If $\DS \del_{1}(\cT):=\sup_j\mathsf{d}_{1}(T,T_j)$  is small enough then there 
is a sequence  of sets $\Lambda_j\subset M$ and homeomorphisms  $h_j:\Lambda\to\Lambda_j$ (that we think of as a ``sequential conjugacy")  
such that 
$h_j$ and $h_j^{-1}$ are uniformly 
%equicontinuous, 
H\"older continuous,
  \begin{equation}\label{Cong}
 T_j\Lambda_j=\Lambda_{j+1}\, \text{ and }\, T_j\circ h_j=h_{j+1}\circ T.
 \end{equation}
 % It also follows that the sets $\Lambda_j$ are close to $\Lambda$. 
Moreover $\DS \sup_j\|h_j-\text{Id}\|_{C^{0}}\to 0$ as $\del_1(\cT)\to 0.$ 
\footnote{Note that in Theorem \ref{ThStability} we can also consider one sided sequences $(T_j)_{j\geq0}$ because they can be extended to two sided ones. The reason we consider two sided sequences and not one sided is  because the definition of hyperbolicity requires considering negative times to 
 define the unstable subspaces.}
 %to have an appropriate sequential hyperbolic product structure (described below) and unique sequential SRB measures (see Theorem \ref{SRBthm}). Because there are many ways to extend a one sided sequence to a two sided one, for one sided sequences these measures are not unique, and also the hyperbolic product structure depends one the extension.}
\\

The sets  $(\Lambda_j)$ are sequentially hyperbolic for the sequence $\cT$ in the  following sense.
They are compact, satisfy $T_j\Lambda_j=\Lambda_{j+1}$ and there exist constants $\la'\in(0,1)$ and $\al_1, A_1,B_1>0$  and sequences of subbudnles 
$\Gamma_j^s\!\!=\!\!\{\Gamma_{j,x}^s: x\in \Lambda_j\}$ and 
$\Gamma_j^u\!\!=\!\!\{\Gamma_{j,x}^u: x\in \Lambda_j\}$ of the tangent bundle $T\Lambda_j$
 such that, for each $j$:
\vskip0.2cm
(i) The set $\{x\in M: \mathsf{d}(x,\Lambda_j)<\al_1\}$ is contained in an open  subset $U_j$ of $V$ such that $T_jU_j\subset V$ and $T_j|_{U_j}$ is a diffeomorphism satisfying
$$
\sup_j\sup_{x\in U_j}\max(\|D_xT_j\|, \|D_xT_j^{-1}\|)\leq A_1;
$$
\vskip0.2cm
(ii) $T\Lambda_j=\Gamma_j^s\oplus\Gamma_j^u$, $DT_j(\Gamma_j^s)=\Gamma_{j+1}^s$, $DT_j(\Gamma_j^u)=\Gamma_{j+1}^u$ and the minimal angle between $\Gamma_j^s$ and $\Gamma_j^u$ is bounded below by $\al_1$;
\vskip0.2cm
(iii) For every $n\in\bbN$ and all $j$ we have 
$\DS
\|D_xT_j^n v\|\leq B_1\cdot (\la')^n\|v\|\,\,\,\text{for every}\,\,v\in \Gamma_{j,x}^s
$

and 
$\DS
\|D_xT_j^{-n} v\|\leq B_1\cdot (\la')^{n}\|v\|\,\,\,\text{for every }\,\,v\in \Gamma_{j,x}^u
$
where $T_{j}^{-n}=(T_{j-n}^n)^{-1}$. 
\vskip0.2cm
(iv) $T_j U_j\subset U_{j+1}$ and 
 $\DS  \bigcap_{n=0}^\infty T_{j-n}^n U_{j-n}=\Lambda_j$.
\end{theorem}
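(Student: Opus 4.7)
My plan is to adapt the classical proof of structural stability of basic hyperbolic sets (\`a la Anosov--Hirsch--Pugh--Shub) to the sequential setting, replacing the single map $T$ by a sequence $(T_j)_{j \in \bbZ}$ and function spaces over $\La$ by bounded sequences of such functions.

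\textbf{Step 1: Sequential invariant cone fields.} Starting from the stable/unstable splitting $T_\La M = \Gam^s \oplus \Gam^u$ for $T$, extend $\Gam^s, \Gam^u$ to continuous subbundles on a neighbourhood of $\La$ and define stable/unstable cones $K^s(x), K^u(x)$. By continuity of $DT$ and uniform hyperbolicity, there exist slightly larger cones that are strictly invariant under $DT$ with definite expansion/contraction. If $\del_1(\cT)$ is small enough, then uniformly in $j$ and in points of a fixed neighbourhood $V$ of $\La$, the differentials $DT_j$ satisfy the same cone conditions: $DT_j K^u(x) \subset \mathrm{int}\, K^u(T_j x)$ with expansion by a factor $> 1/\sqrt\la$, and similarly for $DT_j^{-1}$ on stable cones. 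This already yields the hyperbolic splittings $\Gam_j^s, \Gam_j^u$ in (ii)--(iii) on whichever set turns out to be $\La_j$, with uniform constants $\la', \al_1, A_1, B_1$.

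\textbf{Step 2: The conjugacy as a fixed point on a sequence space.} Solving (\ref{Cong}) amounts to finding continuous maps $h_j \colon \La \to M$ (close to the inclusion $\iota$) such that $h_{j+1} \circ T = T_j \circ h_j$. Write $h_j = \exp_\iota \eta_j$ where $\eta_j$ is a continuous section of $\iota^\ast TM$, and consider the Banach space $\cE$ of bounded sequences $\eta = (\eta_j)_{j \in \bbZ}$ with the sup norm. The conjugacy equation becomes $F(\eta, \cT) = 0$ for a smooth map $F \colon \cE \times C^1\text{-sequences} \to \cE$. At $\eta = 0$, $\cT = (T, T, \dots)$ the derivative $D_\eta F$ is the sequence operator $(\eta_j) \mapsto (DT|_{\Gam^s} \eta_j - \eta_{j+1} \circ T)$ split along $\Gam^s \oplus \Gam^u$. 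Because the stable part contracts forward and the unstable part contracts backward, this operator is invertible on $\cE$ with bounded inverse (this is the sequence-space analogue of the standard hyperbolic splitting argument). The implicit function theorem then produces a unique sequence $\eta = \eta(\cT)$ with $\|\eta(\cT)\|_\cE \to 0$ as $\del_1(\cT) \to 0$, giving continuous $h_j$ satisfying (\ref{Cong}) and $\sup_j \|h_j - \mathrm{Id}\|_{C^0} \to 0$.

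\textbf{Step 3: Injectivity and H\"older regularity of $h_j, h_j^{-1}$.} Injectivity of each $h_j$ follows from a shadowing-type argument: if $h_j(x) = h_j(y)$, then $h_{j+n}(T^n x) = h_{j+n}(T^n y)$ and $h_{j-n}(T^{-n} x) = h_{j-n}(T^{-n} y)$ for all $n$; combined with the closeness of $h_j$ to $\iota$ and hyperbolicity of $T$ on $\La$, this forces $x = y$. Setting $\La_j := h_j(\La)$, the relation (\ref{Cong}) gives $T_j \La_j = \La_{j+1}$. For uniform H\"older regularity, I would run the classical telescoping argument on the orbit of a pair $x, x' \in \La$ with $d(x, x')$ small: decompose $h_j(x) - h_j(x')$ into stable/unstable components in the splitting $\Gam_j^s \oplus \Gam_j^u$, iterate $T$ forward to control the stable piece and $T^{-1}$ backward for the unstable piece, each picking up $\la^{\pm n}$ factors, and optimise in $n$. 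The H\"older exponent depends only on $\la, \la'$ and the cone angles, not on $j$; the same argument for the inverse is symmetric.

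\textbf{Step 4: Local maximality.} The inclusion $\La_j \subset \bigcap_n T_{j-n}^n U_{j-n}$ is automatic once $U_j$ is chosen as a small enough neighbourhood of $\La_j$ inside $V$; invariance is built in. For the reverse inclusion, any point $y$ in the intersection has a full backward/forward $\cT$-orbit staying close to $\La$, which by the (sequential) shadowing from cone invariance of Step~1 is within $O(\del_1(\cT))$ of a unique $T$-orbit in $\La$; uniqueness of the solution in Step~2 then forces $y \in \La_j$. The obstacle I expect to be the hardest is verifying bounded invertibility of $D_\eta F$ on the sequence space $\cE$ uniformly in $j$, because one must carefully set up the hyperbolic splitting of the sequence operator and keep track of constants; once this is in place, the rest proceeds by direct adaptation of classical arguments.
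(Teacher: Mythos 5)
Your proposal gives a self‑contained sketch of sequential structural stability (cone fields, implicit function theorem on a sequence space, shadowing for injectivity and local maximality). That is a legitimate road, but it is more than the paper actually does: for the existence of the $h_j$, the hyperbolic splittings, condition (iv), and the continuity/$C^0$-closeness statement, the paper simply cites \cite{KiferLiu}, \cite{GunKif} and \cite{Liu JSP 1998}, only remarking that the uniform-in-$j$ smallness of $\mathsf{d}_1(T,T_j)$ yields uniform-in-$j$ constants. The only part the paper actually proves is the uniform H\"older continuity of $h_j$ and $h_j^{-1}$, which is the piece missing from the literature; your Steps 1, 2 and 4 are consistent with (and essentially reproduce) what those references do via the IFT-on-sequences argument, so the comparison is really just about Step 3.

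On Step 3 your description is close in spirit but not quite the argument the paper runs, and as phrased it has a gap. For two nearby points $x,x'\in\Lambda$ the displacement $h_j(x)-h_j(x')$ is a point difference, not a tangent vector, and for arbitrary nearby $x,x'$ there is no canonical way to ``decompose into stable and unstable components.'' The paper instead first invokes the local product structure: for $\ve$ small any pair $x,z$ with $\mathsf{d}(x,z)\le\ve$ is joined through an intermediate $y$ with $y\in W^s(x,\ve)$ and $z\in W^u(y,\ve)$, and \eqref{LPMetric} controls $\mathsf d(x,y)$ and $\mathsf d(y,z)$ by $\mathsf d(x,z)$. This reduces the problem to proving uniform H\"older continuity of $h_j$ restricted to a single stable or a single unstable leaf. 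For the unstable case the paper then chooses $n$ with $K^n\rho<\del_0\le K^{n+1}\rho$ (where $\rho=\mathsf d(x,y)$), pushes forward by $T^n$ to macroscopic scale, uses the uniform (equi)continuity of $(h_k)$ imported from \cite{Liu JSP 1998} to get $\mathsf d(h_{j+n}T^n x,h_{j+n}T^n y)<\ve_0$, observes that $h_{j+n}$ carries $W^u$-relations to $W^u_{j+n}$-relations, and finally pulls back via the conjugacy identity $h_j=(T_j^n)^{-1}h_{j+n}T^n$, which contracts by $C\del^n$ along unstable leaves of the sequence. So the key mechanism is this conjugacy-pullback step converting forward expansion for $T$ into backward contraction for the $T_j$; your telescoping/optimize-in-$n$ idea plays the same role but you need to (a) insert the local product structure reduction and (b) use the conjugacy relation, not just iterate $T^{\pm1}$ on $\Lambda$, to control the $h_j$'s rather than just the $T$-orbits.
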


\noindent
 Theorem \ref{ThStability} is proven in 
\cite[Example 4.2.3]{KiferLiu}, \cite[Example 2.5]{GunKif} and \cite[Theorem~1.1]{Liu JSP 1998}) except  the H\"older continuity of $h_j$ and $h_j^{-1}$ was not discussed there.
The proof of H\"older continuity is quite standard but for completeness it is  included in
\S \ref{h sec}. 
We note that in \cite{KiferLiu, GunKif, Liu JSP 1998}    the smallness of $\|T_j-T\|_{C^1}$ was not  uniform in $j$, which led to non-uniform in $j$ hyperbolicity. However, when $\del_1(\cT)$ is 
 uniformly small then the arguments in the proofs yield the uniform hyperbolicity. 

\subsection{Limit Theorems.}
 Let $\pi_j=h_j\circ \pi$. Then $\pi_j$ provides a semiconjugacy between
the sequence $\cT$ and the subshift $\Sigma$ describing the symbolic dynamics
of $T.$ Given a sequence of H\"older functions $\phi_j$ on $\Lambda_j$
 let $\psi_j=\phi_j\circ \pi_j.$
Note that $\psi_j$ are H\"older continuous due to H\"older continuity of $\pi_j$.
Let $\nu_j$ be the Gibbs measures for $\{\psi_j\}$ which exist due to the results
of Appendix \ref{ScSFT-Gibbs}.
Define measures $\mu_j$ on $\Lambda_j$ by
$\mu_j(A)=\nu_j(\pi_j^{-1} A).$
Given a sequence of Holder functions $f_j$ on $\Lambda_j$
let $g_j=f_j\circ \pi_j.$ Thus $S_n f=S_n g\circ \pi_0^{-1}$
and so
$(S_n f)(x)$ when $x$ is distributed according to $\mu_0$ has the the distribution as 
$(S_n g)(\omega)$ when $\omega$ is distributed according to $\nu_0.$ 
We thus obtain 

\begin{corollary}
Theorems \ref{BE}--\ref{ASIP} are valid for $(S_n f)(x)$ where $x$ is distributed according to 
$\mu_0.$
\end{corollary}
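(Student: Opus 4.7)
The plan is to transfer Theorems \ref{BE}--\ref{ASIP} from the symbolic side to the manifold side via the H\"older semiconjugacy $\pi_j=h_j\circ\pi\colon\Sigma\to\Lambda_j$, where $\pi\colon\Sigma\to\Lambda$ is the Markov partition coding for $T$ ($\Sigma$ is a two-sided topologically mixing SFT with shift $\sigma$, satisfying $\pi\circ\sigma=T\circ\pi$, and $\pi$ is H\"older) and $h_j$ is the family of uniformly H\"older homeomorphisms from Theorem \ref{ThStability}. The intertwining $T_j\circ\pi_j=\pi_{j+1}\circ\sigma$ follows by combining $T_j\circ h_j=h_{j+1}\circ T$ with $T\circ\pi=\pi\circ\sigma$. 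Hence $g_j=f_j\circ\pi_j$ is uniformly H\"older on $\Sigma$, with exponent equal to the product of the H\"older exponents of $f_j$ and $\pi_j$ and a uniformly bounded H\"older norm, and $S_n g=(S_n f)\circ\pi_0$ as functions on $\Sigma$.

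With $\mu_j=(\pi_j)_*\nu_j$ and $\nu_j$ the sequential two-sided Gibbs measure for the uniformly H\"older potential $\psi_j=\phi_j\circ\pi_j$ supplied by Proposition \ref{Gibbs two sided}, the push-forward identity gives that the distribution of $S_n f$ under $\mu_0$ equals that of $S_n g$ under $\nu_0$. Consequently the variances $\sig_n$ agree, and every statement in Theorem \ref{BE} and Theorem \ref{ThWass}, which depends only on the one-dimensional law of $S_n$, transfers verbatim from $S_n g$ to $S_n f$ once it is established on $(\Sigma,\nu_0)$. On the symbolic side we apply Section \ref{2side}: the two-sided case is reduced, via the non-stationary Sinai Lemma \ref{Sinai}, to a one-sided SFT for which (V1)--(V7), (LY1), (LY2) and (SC) have been verified in Sections \ref{Eg Gibbs} and \ref{Comp Sec}, so Theorems \ref{BE}--\ref{ASIP} apply directly.

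The only statement requiring more than a distributional identity is Theorem \ref{ASIP}, because it produces a coupling. Once Section \ref{2side} supplies a coupling of $(S_n g)$ with a Gaussian partial-sum process on an extension of $(\Sigma,\nu_0)$ with $|S_n g-\sum_{j\le n} Z_j|=o(V_n^{1/4+\ve})$ almost surely, we push the coupling forward through $\pi_0$: setting $x=\pi_0(\omega)$ on that extended probability space produces a random point with law $\mu_0$ for which $S_n f(x)=S_n g(\omega)$, so the same almost-sure approximation and the $L^2$ and variance bounds in Theorem \ref{ASIP}(ii)--(iii) transfer to $S_n f$ under $\mu_0$. The main technical nuisance, already handled in Section \ref{2side}, is controlling the uniformly bounded two-sided versus one-sided coboundary error \eqref{App} inside the Berkes--Philipp coupling; beyond that the proof is a routine symbolic-to-manifold translation through the H\"older conjugacy $h_j$.
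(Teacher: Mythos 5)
Your proposal is correct and follows the same route as the paper: push everything to the symbolic side via the H\"older semiconjugacy $\pi_j=h_j\circ\pi$, observe that $S_nf\circ\pi_0=S_ng$ and $\mu_0=(\pi_0)_*\nu_0$ so the laws agree, and invoke the two-sided SFT reduction of \S\ref{2side}. Your extra paragraph on transporting the ASIP coupling through $\pi_0$ is a welcome clarification that the paper leaves implicit, but it is not a different argument.
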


We also have the following result (see \cite[Theorem 4.3]{GunKif}).
\begin{theorem}[Sequential SRB measures]\label{SRBthm}
Suppose that $\Lambda$ is a hyperbolic attractor. Then
there is a sequence of probability measures $\mu_j$ on $\Lambda$ such that $(T_j)_*\mu_j=\mu_{j+1}$ and
$$
\mu_j=\lim_{n\to\infty}(T_{j-n}^n)_*(\rho_{j-n} d\text{Vol})
$$
for every   uniformly bounded sequence of probability densities $\rho_n$ on $\Gamma_{n}$.  The measures $(\mu_j),$ $j\in\bbZ$ are the unique family of equivariant measures such that the conditional measures of  $\mu_j$ on the unstable manifolds (see \S \ref{h sec}) at time $j$ are absolutely continuous with
respect to the Riemannian volume on these submanifolds.

 Moreover $\mu_j$ can be obtained by the construction described above corresponding to the sequence of 
 functions 
 $\phi_j=-\ln J(T_j|_{\Gamma_j^u})$, where $J(\cdot)$ stands for the Jacobian matrix. 

Therefore $\DS
W_n=\sum_{j=0}^{n-1}f_j(T_0^j x)$ satisfies
all the limit theorems in Section \ref{Sec2} 
 when $x$ is distributed according to a measure having a H\"older density with respect
 to $\mu_0.$
\end{theorem}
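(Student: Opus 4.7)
The plan is to construct $\mu_j$ via the symbolic representation described immediately before the theorem, identify it as the volume push-forward limit by tracking densities along unstable manifolds, and finally deduce the limit theorems by transferring from the symbolic system. First I would set $\phi_j=-\ln J(T_j|_{\Gamma_j^u})$. These are uniformly Hölder in $j$ because the unstable subbundle $\Gamma_j^u$ is Hölder with constants uniform in $j$ (a standard consequence of the uniform hyperbolicity guaranteed by Theorem~\ref{ThStability} when $\delta_1(\cT)$ is small) and $T_j$ is uniformly $C^2$. Setting $\psi_j=\phi_j\circ\pi_j$, Proposition~\ref{Gibbs two sided} supplies sequential Gibbs measures $\nu_j$ on $\Sigma$, and I define $\mu_j:=(\pi_j)_*\nu_j$. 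Equivariance $(T_j)_*\mu_j=\mu_{j+1}$ is immediate from the semiconjugacy $T_j\circ\pi_j=\pi_{j+1}\circ\sigma$ and the corresponding equivariance of $\nu_j$.

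The heart of the argument is the volume-limit formula and the identification of conditionals. Fix $j$ and let $R\subset\Lambda_j$ be a Markov rectangle foliated by local unstable pieces. For a length-$n$ cylinder $C_n\subset R$ the inverse branch $\Psi_n=(T_{j-n}^n)^{-1}|_{C_n}$ is a diffeomorphism whose Jacobian along the unstable direction equals the exponential of $-\sum_{k=0}^{n-1}\phi_{j-n+k}\circ T_{j-n}^k\circ\Psi_n$, while its stable direction contracts exponentially. Consequently, up to a Hölder distortion factor bounded uniformly in $n$ and up to the exponentially small stable contraction, the density of $(T_{j-n}^n)_*(\rho_{j-n}\,d\mathrm{Vol})$ on $C_n$ is proportional to the Birkhoff exponential that defines the Gibbs class of $\psi_j$. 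Weak-$*$ compactness and the uniqueness of sequential Gibbs measures then force any accumulation point to coincide with $\mu_j$, giving the claimed convergence. The same local bounds show that the conditionals of $\mu_j$ on local unstable pieces have densities bounded above and below with respect to leaf volume, so they are absolutely continuous.

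The main obstacle is obtaining the uniform Hölder distortion of the unstable Jacobian cocycle $\prod_{k=0}^{n-1}J(T_{j-n+k}|_{\Gamma_{j-n+k}^u})$ along unstable leaves, uniformly in $j$ and $n$. The subtlety is that both the maps $T_{j-n+k}$ and the distributions $\Gamma_{j-n+k}^u$ vary with $k$, so the standard single-map distortion bound does not apply directly; however it reduces to a telescoping argument once one uses that the Hölder norms of $T_j$ and of $\Gamma_j^u$, together with the expansion/contraction rates, are uniform in $j$, which is the content of Theorem~\ref{ThStability}. For uniqueness among equivariant families with absolutely continuous unstable conditionals, I would approximate the conditionals of a candidate $\tilde\mu_{j-n}$ by bounded densities (using tightness and the contraction in the stable direction) and apply the volume-limit formula to conclude $\tilde\mu_j=\mu_j$.

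Finally, to obtain the limit theorems for initial measures with Hölder density $f$ relative to $\mu_0$, I would lift to $\Sigma$ by setting $\tilde\nu_0:=(f\circ\pi_0)\,\nu_0$; then $(\pi_0)_*\tilde\nu_0=f\,\mu_0$ and $f\circ\pi_0$ is uniformly Hölder because $\pi_0=h_0\circ\pi$ is Hölder. With $g_j:=f_j\circ\pi_j$, which are uniformly Hölder on $\Sigma$, the identity $S_n f\circ\pi_0=S_n g$ shows that the distribution of $S_n f$ under $f\,\mu_0$ matches that of $S_n g$ under $\tilde\nu_0$. Since the symbolic system satisfies (LY1), (LY2) and (SC) by \S\ref{Gibbs}, Theorems~\ref{BE}, \ref{ThWass} and \ref{ASIP} apply to $S_n g$ and transfer verbatim to $S_n f$.
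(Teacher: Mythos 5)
The paper does not prove the first two paragraphs of Theorem~\ref{SRBthm}; it simply cites \cite[Theorem 4.3]{GunKif} for the existence, the volume-limit characterization, the uniqueness among equivariant families with absolutely continuous unstable conditionals, and the identification of $\mu_j$ with the Gibbs family for $\phi_j=-\ln J(T_j|_{\Gamma_j^u})$. The only piece the paper actually derives in-house is the final ``Therefore'' clause, which follows immediately from the Corollary that precedes the theorem (transfer of Theorems~\ref{BE}--\ref{ASIP} to the hyperbolic setting via the Hölder semiconjugacies $\pi_j=h_j\circ\pi$). Your last paragraph reproduces exactly that transfer argument, so that part is fine and matches the paper.

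For the SRB part, you attempt a self-contained Sinai--Ruelle--Bowen argument instead of invoking \cite{GunKif}. The outline (uniform Hölder regularity of $\Gamma_j^u$, distortion of the time-dependent unstable Jacobian cocycle via a telescoping estimate, weak-$*$ compactness plus uniqueness of sequential Gibbs states) is the right shape and would plausibly go through, and you correctly flag the main technical burden. However, the uniqueness step as written does not close. You propose to approximate the unstable conditionals of a candidate $\tilde\mu_{j-n}$ by bounded densities and then invoke the volume-limit formula, but that formula is stated for uniformly bounded densities with respect to $d\mathrm{Vol}$ on $M$ (or on the trapping neighborhoods $U_{j-n}$), whereas the attractor $\Lambda_{j-n}$ will typically have zero Lebesgue measure, so $\tilde\mu_{j-n}$ is singular with respect to $d\mathrm{Vol}$ and cannot be so approximated in a way that the hypothesis applies. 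The standard route is different: use equivariance plus the distortion estimate to show that the unstable conditional densities of $\tilde\mu_j$ must satisfy the same multiplicative cohomological relation as the Gibbs densities along unstable leaves, then conclude $\tilde\mu_j=\mu_j$ from uniqueness of sequential Gibbs measures (Proposition~\ref{Gibbs two sided}); alternatively, push the unstable conditionals of $\tilde\mu$ forward inside a fixed rectangle and compare directly to the Gibbs conditionals of $\mu_j$. As it stands, your uniqueness argument has a genuine gap. If the goal were merely to match the paper's proof, it would suffice to cite \cite[Theorem 4.3]{GunKif} for the first two paragraphs and give the transfer argument for the last.
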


\subsection{H\"older continuity of the conjugacies}
\label{h sec}
To prove the H\"older continuity of $h_j$ and $h_j^{-1}$ we need some background.

\subsubsection*{Local stable and unstable manifolds}
For $\ve$ small enough, and $x\in\Lambda_j$ define $W_j^{s}(x,\ve)$ to be the set of all points $y\in \Lambda_j$ such that 
$\mathsf{d}(T_j^nx,T_j^n y)\leq \ve$ for all $n$ and $\mathsf{d}(T_j^nx,T_j^n y)\to 0$. Similarly, we define  $W_j^{u}(x,\ve)$ to be the set of all points $y\in \Lambda_j$ such that 
$$
\mathsf{d}((T_{j-n}^n)^{-1}x,(T_{j-n}^n)^{-1}y)\leq \ve
$$ 
for all $n$ and $\mathsf{d}((T_{j-n}^n)^{-1}x,(T_{j-n}^n)^{-1}y)\to 0$.  Then  (see \cite{GunKif, KiferLiu} and \cite{MR}) $W_j^{s}(x,\ve)$ and $W_j^u(x,\ve)$ are manifolds and  tangent space of $W_j^{s}(x,\ve)$ at $x$ is $\Gamma_{x,j}^s$ while the tangent space of $W_j^{u}(x,\ve)$ at $x$ is $\Gamma_{x,j}^u$. Moreover, there are constants $C>0$ and $\del\in(0,1)$ such that for every $j$,
\begin{equation}\label{Stab}
\mathsf{d}(T_{j}^n x, T_j^n y)\leq C\del^n\text{ for all }y\in W_j^s(x,\ve)
\end{equation}
and 
\begin{equation}\label{Uns}
\mathsf{d}((T_{j-n}^{n})^{-1} x, (T_{j-n}^n)^{-1} y)\leq C\del^n\text{ for all }y\in W_j^u(x,\ve)
\end{equation}

\begin{remark}\label{Rem Man}
Like in the autonomous case,  we have the following (see \cite[Proposition 3.5]{MR}).  
Define $\overline{W}_j^s(x,\beta)$ to be the set of all points $y\!\!\in\!\!\Lambda_j$ such that $\mathsf{d}(T_j^nx, T_j^n y)\!\!\leq \!\!
\beta$ for all $n\geq 0$. Similarly, let  $\overline{W}_j^u(x,\beta)$  be the set of all points $y\in \Lambda_j$ such that  $\mathsf{d}((T_{j-n}^n)^{-1}x,(T_{j-n}^n)^{-1}y)\leq \beta$  for all $n\geq0$. Then for every $\be<\frac 12\ve$,
$$
\overline{W}_j^{s}(x,\beta)\subset W_j^s(x,\ve)
\quad
\text{and} 
\quad
\overline{W}_j^u(x,\beta)\subset W_j^u(x,\ve).
$$
\vskip-2mm
This essentially means that, like in the autonomous case, up to replacing $\ve$ with $\frac12\ve$, 
the local stable/unstable manifolds can be defined using only the condition about the $\ve$-closeness of the forward/backward orbits.
\end{remark}
Let us also denote by $W^s(x,\ve)$ and $W^u(x,\ve)$ the local stable and unstable manifolds of $x$ with respect to $T$ (then all the above properties hold true).

\begin{proof}[Proof of H\"older continuity]
The proof is a minor modification of the proof of \cite[Proposition 19.1.2]{HassKat}, but for readers' convenience we provide the details.
We only prove that $h_j$ is H\"older continuous, the proof that $h_j^{-1}$ is  H\"older continuous is analogous, see below.
Fix some $\ve$ small enough (in a way that will be determined later). 
%To prove that $h_j$ are H\"older continuous we first need the following.
We say that $x,y\in\Lambda$ are $s$-equivalent if $y\in W^s(x,\ve)$.  Similarly, we say that they are $u$-equivalent if $y\in W^u(x,\ve)$.
Then, using local coordinates and that the angles between the stable and unstable directions are uniformly bounded below we see that if $\ve$ is small enough then there is a constant $K_1$ such that
\begin{equation}  
\label{LPMetric}
\mathsf{d}(x,y)^2+\mathsf{d}(y,z)^2\leq K_1\mathsf{d}(x,z)^2
\end{equation}
for every $x,y,z\in\Lambda$  such that $x$ is $s$-equivalent to $y$ and $y$ is $u$-equivalent to $z$, and $\mathsf{d}(x,z)\leq \ve.$ 
%(in the proof that $h_j^{-1}$ is H\"older continuous we will use the equivalence relation %induced by the manifolds $W_j^u(x,\ve)$).
 In view of \eqref{LPMetric}
in order to  show that $h_j$ is H\"older continuous
it suffices to show that restrictions of $h_j$ to both unstable and stable manifolds are
uniformly H\"older (see e.g. \cite[Proposition 19.1.1]{HassKat}).
We will consider $h_j|W^u(x,\ve)$, the result for $h_j|W^s(x,\ve)$ follows by replacing $T$ 
by $T^{-1}.$

Let us prove that $h_j|W^u(x,\ve)$ are uniformly H\"older continuous. Fix some $\ve_0>0$.  By uniform continuity of $h_j$ proven in \cite{Liu JSP 1998},
there exists $0<\del_0<\ve$ such that for every $j$ and every $x,y\in\Lambda$ with $\mathsf{d}(x,y)<\del_0$ we have $\mathsf{d}(h_j(x),h_j(y))< \ve_0$.
Let $x,y\in\Lambda$ be such that $y\in W^u(x,\ve)$. Denote $\rho=\mathsf{d}(x,y)$ and let  $K>1$ be a Lipschitz constant for $T$. Assuming that $\rho \leq K^{-2}\del_0$  there is a unique natural number $n$ such that $K^n\rho<\del_0\leq K^{n+1}\rho$. Then $\mathsf{d}(T^k x, T^k y)\leq K^n\rho<\del_0$ for all $k\leq n$ and so 
$\DS
\mathsf{d}(h_{j+n}T^{n}x,h_{j+n}T^{n}{y})<\ve_0.
$
Since $y\in W^u(x,\ve)$ and $\mathsf{d}(T^k x, T^k y)\!\!<\!\!\del_0\!\!<\!\!\ve$ for $0\leq k\leq n$ we have
 $T^ny\in W^u(x,\ve)$. Using  Remark \ref{Rem Man} and the equicontinuity of  $(h_k)$,   we see that $h_{j+n}T^ny\in W_{j+n}^u(h_{j+n}T^nx,\ve')$ where 
 $\ve'\!\!=\!\!\ve'(\ve)\!\to\!\! 0$ as $\ve\!\to\!\! 0$ (we can take $\DS \ve'\!\!=\!\!\sup_k(\sup\{\mathsf{d}(h_k(a),h_k(b)): \mathsf{d}(a,b)\leq \ve\}$). 
 Thus, by fixing $\ve$ small enough
and using \eqref{Uns} (with the above $\ve'$ instead of $\ve$) we see that there are constants $C>0$ and $\del\in(0,1)$ such that
$$
\mathsf{d}(h_j(x), h_j(y))=\mathsf{d}((T_{j}^n)^{-1} h_{j+n}T^{n}x, (T_{j}^n)^{-1} h_{j+n}T^{n}y)\leq C\del^n\ve_0
$$
$$
=C\ve_0K^{-cn}=C\ve_0K^c (K^{-n-1})^c\leq (C\ve_0 K^c\del_0^{-c})\rho^{c}= (C\ve_0 K^c\del_0^{-c})\mathsf{d}(x,y)^c
$$
where $c=\frac{|\ln \del|}{\ln K}>0$.

 This shows  H\"older continuity of $h_j.$
The proof that $h_j^{-1}$ is H\"older continuous is similar except we use the equivalence relations induced by $W_j^u(x,\ve)$  and by $W_j^s(x, \ve).$
\end{proof}

\section{Perturbation theorem}
\label{AppPert}
\subsection{The statement}
%In this section we will describe the basic properties of sequential eigenvalues, projections etc. of sequences of operators. 

Let $(B_j, \|\cdot\|_j)_{j\geq 0}$  be a sequence of Banach spaces and \\$A_j:B_j\to B_{j+1}$ be a sequence of bounded linear operators. We assume here that   there are $(\la_{j},h_{j},\nu_{j})\in(\bbC\setminus\{0\})\times B_j\times B_j^*$
where $B_j^*$ denotes the dual space of $B_j$
  so that 
\begin{equation}\label{u}
A_j h_{j}=\la_{j}h_{j+1},\quad (A_j)^* \nu_{j+1}=\la_{j}\nu_{j}.
\end{equation}
We will also assume that $\text{dim}(B_j)>1$ and that $\nu_j(h_j)=1$.

Let $\textbf{C}$ be a complex Banach space.
% and let $\{(B_j, \|\cdot\|_j\}$ be a sequence of Banach spaces. 
Let $\ve>0$. Denote by $B_{\textbf{C}}(0,\ve)$ the open ball  in $\textbf{C}$ around $0$ with radius $\ve$. 
 For each $t\in B_{\textbf{C}}(0,\ve)$ let $A_j^{(t)}:B_j\to B_{j+1}$ be a sequence of bounded operators
such that $A_j^{(0)}=A_j$.
 Set 
 $\DS A_j^{t,n}=A_{j+n-1}^{(t)}\circ\dots\circ A_j^{(t)}$. In the sequel, 
 for any sequence of operators $R_j:B_j\to B_{j+1}$ we will denote
 $\DS R_{j}^n=R_{j+n-1}\circ\cdots\circ R_{j+1}\circ R_j$. Write $\bbC'=\bbC\setminus\{0\}$.

 \begin{assumption}\label{Ass}
(i) $\DS \inf_{j}|\la_{j}|>0$.

\noindent
(ii) There are constants $ C_0,\delta_0\!\!>\!\!0$ such that  for all $j$ and  $g\in B_j$ so that $\nu_{j}(g)\!\!=\!\!0$  we have
\begin{equation}\label{ExCon z=0.0}
\|A_j^n g\|\leq C_0\|g\|\del_0^n.
\end{equation}
Moreover,  $\DS \liminf_{n\to\infty}\inf_{j}|\la_{j,n}|^{1/n}>\del_0$, where $\DS \la_{j,n}=\prod_{k=j}^{j+n-1}\la_k$.

\noindent
(iii)  The maps $t\to A_j^{(t)}$ are analytic in some neighborhood $\cU$ of the origin (which does not depend on $j$)   and $\DS \sup_{t\in\cU}\|A_j^{(t)}\|$ are uniformly bounded in $j.$
\end{assumption}

%Next, under Assumption \ref{Ass}, let $Q_j:B_j\to B_{j}$ and $P_j:B_{j}\to B_{j+1}$ be defined as in Lemma \ref{OpLem}.

 The main result in this section is as follows. %State it first?
\begin{theorem}[A sequential perturbation theorem]\label{PertThm}
Under Assumption \ref{Ass}, for every $\del_1> \del_0$ 
there exist $r_1,C_1>0$ such that for every $j\in \bbN$ and $t\in B_{\textbf{B}}(0,r_1)$  we have the following.

(i) There are triplets  $(\la_{j}(t),h_{j}^{(t)},\nu_{j}^{(t)})\in\bbC'\times B_j\times B_j^*$,  which are uniformly bounded in $j,t$ so that 
\[
A_j^{(t)} h_{j}^{(t)}=\la_{j}(t)h_{j+1}^{(t,y)},\quad
(A_j^{(t)})^* \nu_{j+1}^{(t)}=\la_{j}(t)\nu_{k,j}^{(t,y)}.
\]
Moreover, $\la_{j}(0)=\la_{j}$, $h_{j}^{(0)}=h_{j}$, $\nu_{j}^{(0)}=\nu_{j}$,
and the triplets are analytic functions of  $t$ and their norm is uniformly bounded.
Furthermore, $\nu_{j}(h_{j}^{(t)})=\nu_{j}^{(t)}(h_{\ell,j})=\nu_j^{(t)}(h_j^{(t)})=1$.
If instead of analyticity of $t\mapsto A_j^{(t)}$ 
we assume that
 $\mathbf{C}$ is a real Banach space and 
$t\mapsto A_j^{(t)}$
 is $C^k$ for some $k\in\bbN$ with uniform bounds on the $C^k$ norms, then the above triplets are $C^k$ functions of $t$ with uniformly bounded $C^k$ norms.

(ii) 
Consider the operators $P_j^{(t)}:B_{j}\to B_{j+1}$ given by 
$$
P_j^{(t)} g=\la_{j}\nu_{j}^{(t)}(g)h^{(t)}_{j+1}.
$$
Then 
$$
P_j^{t,n}=\la_{j,n}(t)\nu_{j}^{(t)}(\cdot)h_{j+n}^{(t)}
$$
and 
denoting $E_j^{(t)}=A_j^{(t)}-P_j^{(t)}$ we have
$
A_j^{t,n}=P_j^{t,n}+E_j^{t,n}
$
and 
%Finally, for any $\del_1>\del_0$ there are constants $C_1,r_1>0$ so that
\begin{equation}\label{EX z}
\|E_j^{t,n}\|\leq C_1\del_1^n.
\end{equation}
%In fact, we have $\nu_{k,j}(h_{\ell,j}^{(z)})=\nu_{k,j}(h_{\ell,j})=\del_{k \ell}$ and $\nu_{k,j}^{(z)}(h_{\ell,j}})=\nu_{k,j}(h_{\ell,j})=\del_{k \ell}$. 
%Can I really show that (\la_{k,j}(z))_j$ and $(\la_{k',j}(z))_j$ are not  multiplicatively homologous  or only that we get this \del_{k,\ell} property?
\end{theorem}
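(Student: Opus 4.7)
My approach would follow the Hennion--Herv\'e strategy \cite{HH} referenced in the paper, recasting the perturbed eigentriplet problem for a sequence of operators as a fixed-point problem on a space of sequences, since no single-operator functional calculus is available in the non-autonomous setting.

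First I would normalize: set $\hat A_j = \la_j^{-1} A_j$ and $\hat A_j^{(t)} = \la_j^{-1} A_j^{(t)}$, so $\hat A_j h_j = h_{j+1}$, $(\hat A_j)^* \nu_{j+1} = \nu_j$, and the unperturbed rank-one projectors $\Pi_j g := \nu_j(g) h_j$ intertwine the dynamics: $\hat A_j \Pi_j = \Pi_{j+1} \hat A_j$. Combining Assumption \ref{Ass}(ii) with the lower bound on $|\la_{j,n}|^{1/n}$ produces $\hat\delta \in (0,1)$ and $\hat C$ such that, uniformly in $j$, $\|\hat A_j^n|_{\ker \nu_j}\| \leq \hat C \hat\delta^n$; writing $g = \nu_j(g) h_j + (g - \nu_j(g) h_j)$ then yields the dual estimate $\|(\hat A_j^n)^* \phi\| \leq \hat C' \hat\delta^n \|\phi\|$ whenever $\phi(h_{j+n}) = 0$. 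By Assumption \ref{Ass}(iii) the perturbations $\hat B_j^{(t)} := \hat A_j^{(t)} - \hat A_j$ are $O(\|t\|)$ in operator norm, uniformly in $j$.

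To construct $h_j^{(t)}$, I write $h_j^{(t)} = h_j + g_j^{(t)}$ with $g_j^{(t)} \in \ker \nu_j$ (which automatically enforces $\nu_j(h_j^{(t)}) = 1$) and set $\mu_j(t) := \la_j(t)/\la_j$. Applying $\nu_{j+1}$ and $I - \Pi_{j+1}$ to the eigenvalue equation $\hat A_j^{(t)} h_j^{(t)} = \mu_j(t) h_{j+1}^{(t)}$ yields respectively
\[
\mu_j(t) = \nu_{j+1}\bigl(\hat A_j^{(t)}(h_j + g_j^{(t)})\bigr), \qquad \mu_j(t)\, g_{j+1}^{(t)} = \hat A_j g_j^{(t)} + (I - \Pi_{j+1}) \hat B_j^{(t)}(h_j + g_j^{(t)}),
\]
using that $\hat A_j g_j^{(t)} \in \ker \nu_{j+1}$. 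The second relation is a forward recursion for $(g_j^{(t)})_{j \geq 0}$, which I view as a fixed-point equation on the Banach space of bounded sequences $(g_j)_{j \geq 0}$ in $\ker \nu_j$, equipped with the adapted norm $\|(g_j)\|_* := \sup_j \sup_{n \geq 0} \hat\delta^{-n/2} \|\hat A_j^n g_j\|$ (and boundary specification $g_0 = 0$); this norm is chosen so that $\hat A_j$ acts as a per-step contraction rather than an $n$-step one. For $\|t\|$ small the map becomes a genuine contraction, so the Banach fixed-point theorem with analytic (resp.\ smooth) parameters produces a unique small fixed point depending analytically (resp.\ $C^k$) on $t$, yielding $h_j^{(t)}$ and $\la_j(t) = \la_j \mu_j(t)$ with uniform bounds.

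An entirely analogous fixed-point argument, applied to the dual equation $(\hat A_j^{(t)})^* \nu_{j+1}^{(t)} = \mu_j(t) \nu_j^{(t)}$ using the adjoint contraction above, produces $\nu_j^{(t)} = \nu_j + \eta_j^{(t)}$ with $\eta_j^{(t)}(h_j) = 0$; a final scalar rescaling by $\nu_j^{(t)}(h_j^{(t)})^{-1} = 1 + O(\|t\|^2)$ enforces the remaining compatibility $\nu_j^{(t)}(h_j^{(t)}) = 1$. For the decay estimate (ii), I observe that $A_j^{t,n} - P_j^{t,n}$ is the restriction of $A_j^{t,n}$ to $\ker \nu_j^{(t)}$, a subspace $A_j^{(t)}$-invariant by equivariance of the perturbed rank-one projector $g \mapsto \nu_j^{(t)}(g) h_j^{(t)}$; on this subspace the normalized operator $\hat A_j^{(t)}$ is an operator-norm small perturbation of $\hat A_j|_{\ker \nu_j}$, so a direct comparison argument shows its iterates decay like $\hat\delta_1^n$ for any prescribed $\hat\delta_1 > \hat\delta$, which after unnormalizing gives \eqref{EX z}. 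The main technical obstacle throughout is the mismatch between the $n$-step contraction supplied by the spectral gap and the per-step contraction required for a clean Banach fixed-point argument; it is resolved by the adapted norm $\|\cdot\|_*$, and the essentially non-autonomous feature of the proof is that all the constants ($\hat C$, $\hat\delta$, $\hat C'$, and the contraction and analyticity estimates) must be kept uniform in $j$ at every step.
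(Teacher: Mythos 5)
Your proposal is correct and establishes the same conclusion, but the mechanism is genuinely different from the paper's. The paper first extends the sequence of operators to all $j\in\bbZ$ (via a short auxiliary lemma), works on the Banach space $\textbf{H}$ of bounded two-sided sequences $(g_j)$ with $g_j\in\ker\nu_j$, defines $F(t,\bar g)_j=A_{j-1}^{(t)}(h_{j-1}+g_{j-1})-\nu_j\bigl(A_{j-1}^{(t)}(h_{j-1}+g_{j-1})\bigr)(h_j+g_j)$, and verifies that the linearization $\Phi\bar g=(A_{j-1}g_{j-1}-\la_j g_j)_j$ is an isomorphism: injectivity comes from iterating backwards in time and using $|\la_{j,n}|^{1/n}>\del_0$, and surjectivity from the explicit backward Neumann series $\DS h_j=-\sum_{n\ge 0}\la_{j-n,n}^{-1}A^n_{j-n}g_{j-n}$. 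The implicit function theorem then produces $\bar g^{(t)}$ analytically; the dual family $\nu_j^{(t)}$ is obtained identically on the space $\textbf{E}$ of sequences annihilating $h_j$, and the exponential estimate is an application of Lemma \ref{SpecRadLemma} to $R_j=A_j-P_j$ and $S_j=A_j^{(t)}-P_j^{(t)}$. You instead stay one-sided, impose the boundary condition $g_0=0$, turn the eigenequation into a forward recursion, and treat it as a fixed point of a map on bounded sequences, with the adapted norm $\|\bar g\|_*=\sup_j\sup_n\hat\delta^{-n/2}\|\hat A_j^n g_j\|$ converting the $n$-step gap into a genuine one-step contraction, so that the parametrized Banach fixed-point theorem (rather than IFT) delivers analytic/smooth dependence. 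Both routes exploit the same quantitative input (the gap between $\del_0$ and $\liminf_j|\la_{j,n}|^{1/n}$); your version avoids the two-sided extension and the separate injectivity/surjectivity checks, at the price of the explicit adapted norm and the boundary specification $g_0=0$, while the paper's version dovetails with its remark that the projectors $P_j$ are unique precisely when the system is two-sided. One small cosmetic point: $A_j^{t,n}-P_j^{t,n}$ is not literally "the restriction of $A_j^{t,n}$ to $\ker\nu_j^{(t)}$" (it is defined on all of $B_j$), but it agrees with $A_j^{t,n}$ there and vanishes on $h_j^{(t)}$, which is what your comparison argument needs; what you describe is, in substance, exactly Lemma \ref{SpecRadLemma}.
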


\subsection{Preparations for the proof of Theorem \ref{PertThm}}

Our first result shows that  we can always consider two sided sequences instead of one sided ones.
\begin{lemma}
We can extend both sequences  $(B_j, \|\cdot\|_j)_{j\geq 0}$ and $(A_j)_{j\geq 0}$ to two sided sequences such that \eqref{u} and Assumption \ref{Ass} still hold.
\end{lemma}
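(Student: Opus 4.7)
The plan is to extend everything to the left by constants, using a rank-one operator built from the existing data at $j=0$. Concretely, set $B_j := B_0$ (with the norm $\|\cdot\|_0$) for every $j<0$, and pick any scalar $\la\in\bbC$ with $|\la|>\del_0$. For $j<0$, define $(\la_j,h_j,\nu_j) := (\la,h_0,\nu_0)$ and the rank-one operators
$$A_j g := \la\,\nu_0(g)\,h_0, \qquad A_j^{(t)} := A_j \text{ for all } t\in\cU.$$

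The equivariance relation \eqref{u} is then immediate from $\nu_0(h_0)=1$: for any $j<0$,
$$A_j h_j = \la\,\nu_0(h_0)\,h_0 = \la h_0 = \la_j h_{j+1},\qquad (A_j)^*\nu_{j+1}=\la\,\nu_0(h_0)\,\nu_0 = \la_j\nu_j.$$
Assumption \ref{Ass}(i) follows from $\inf_{j\in\bbZ}|\la_j|=\min(|\la|,\inf_{j\geq 0}|\la_j|)>0$, and Assumption \ref{Ass}(iii) is trivial since $A_j^{(t)}$ is constant in $t$ for $j<0$, so the analyticity and the uniform operator-norm bound come for free. The decay estimate \eqref{ExCon z=0.0} is equally trivial for $j<0$: if $\nu_j(g)=\nu_0(g)=0$ then $A_j g = 0$, hence $A_j^n g = 0$ for every $n\geq 1$; for $j\geq 0$ it is the original hypothesis.

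The only non-routine verification is the uniform lower bound $\liminf_{n\to\infty}\inf_{j\in\bbZ}|\la_{j,n}|^{1/n}>\del_0$. I would split into three cases. For $j\geq 0$ this is given. If $j<0$ and $j+n\leq 0$, then $|\la_{j,n}|^{1/n}=|\la|>\del_0$. Otherwise $j<0<j+n$ and $|\la_{j,n}|=|\la|^{|j|}\,|\la_{0,j+n}|$, so
$$\log|\la_{j,n}|^{1/n} \;=\; \tfrac{|j|}{n}\log|\la|\;+\;\tfrac{j+n}{n}\log|\la_{0,j+n}|^{1/(j+n)},$$
which is a convex combination. When $j+n\to\infty$ the second term is eventually $>\log\del_0$ by hypothesis, while $\log|\la|>\log\del_0$ always; when $j+n$ stays bounded as $n\to\infty$, the weight $(j+n)/n$ on the (bounded) second factor tends to zero and the combination approaches $\log|\la|>\log\del_0$. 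In either regime a uniform bound of the form $\log\del_0 + c$ with $c>0$ is obtained for $n$ large, uniformly in $j$, which is exactly the required liminf estimate.

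The one real obstacle is this convex-combination argument, which is also the reason the scalar $\la$ must be chosen with $|\la|>\del_0$: the original hypothesis does not imply $|\la_0|>\del_0$, so one cannot simply set $\la=\la_0$.
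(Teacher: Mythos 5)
Your construction is essentially identical to the paper's: extend to the left by rank-one operators $A_j g=\la\,\nu(g)\,h$ with $|\la|>\del_0$ and $\nu(h)=1$, under which \eqref{u} is immediate and $A_j g=0$ whenever $\nu_j(g)=0$, so the decay estimate is trivial for $j<0$ (the paper takes $\la=1+\del_0$ and a fresh pair $\nu_{-1},h_{-1}$ rather than reusing $\nu_0,h_0$, but that difference is cosmetic). You actually go further than the paper in checking the $\liminf$ condition via the convex-combination argument; the only small imprecision is the word ``bounded'' for the second factor — what the argument actually needs is a lower bound on $\log|\la_{0,m}|^{1/m}$ for small $m$, which follows from Assumption~\ref{Ass}(i), i.e.\ $\inf_j|\la_j|>0$.
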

\begin{proof}
For $j<0$ we take $B_j=B_0$. Then we take a nonzero  $\nu_{-1}\in B_0^*$ and set $\nu_{j}=\nu_{-1}$ for $j<-1$. We also define $\la_j=1+\del_0$ for $j<0$. Finally, we take a nonzero $h_{-1}\in B_0$ such that $\nu_{-1}(h_{-1})=1$ and define $h_j=h_{-1}$ for all $j<0$. For $j<0$, define $A_j(g)=(1+\del_0)\nu_{-1}(g)h_{j+1}=\la_j\nu_j(g) h_{j+1}$. It is clear that \eqref{u} holds with the new two sided sequences. Assumption \ref{Ass} is in force since for $j<0$, if $\nu_j(g)=0$ then $A_j(g)=0$ and so $A_j^ng=0$.
 \end{proof}
 \noindent
 Henceforth we assume that we have a two sided sequence satisfying  \eqref{u} and Assumption~\ref{Ass}.

\begin{remark}
If the operators $A_j$ are defined for all $j\in \bbZ$ then the proof of Theorem \ref{PertThm} 
(which proceeds by applying the Implicit Function Theorem) shows that the projections $P_j$ constructed in the theorem are unique.
The uniqueness does not hold if the operators are only defined for $j\in \bbN$ (cf.
the discussion after Theorem \ref{RPF 0}). 
\end{remark}

\begin{lemma}\label{OpLem}
(i) Let the operator $Q_j:B_j\to B_j$ be given by 
$$
Q_j(g)=\nu_{j}(g)h_{j}.
$$
Then for every $j$ and $g\in B_j$ we have $\nu_{j}(Q_j g)=\nu_{j}(g)$.

(ii) Consider the operator $P_j:B_j\to B_{j+1}$ given by 
$$
P_j g=\la_{j}\nu_{j}(g)h_{j+1}.
$$
Then $P_j$ coincides with $A_j$ on $\eta_j:=\text{span}\{h_{j}\}$ and
$$
A_{j+1}P_j=P_{j+1}A_j=P_{j+1}P_j,\text{ and }\ P_j^n=\la_{j,n}\nu_{j}(\cdot)h_{j+n}.
$$

Therefore, with $E_j=A_j-P_j$ we have
$$
E_j^n=A_j^n-P_j^n=A_j^n-\la_{j,n}\nu_{j}(\cdot)h_{j+n}.
$$
In particular,
$\DS
A_j^n=P_j^n+E_j^n.
$
\end{lemma}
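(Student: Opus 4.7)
Part (i) is immediate: for any $g\in B_j$,
\begin{equation*}
\nu_j(Q_j g)=\nu_j\bigl(\nu_j(g)h_j\bigr)=\nu_j(g)\nu_j(h_j)=\nu_j(g),
\end{equation*}
using only the normalization $\nu_j(h_j)=1$. The rest of the lemma reduces to direct computations that exploit only the two eigen-relations $A_j h_j=\la_j h_{j+1}$, $A_j^*\nu_{j+1}=\la_j\nu_j$, and $\nu_j(h_j)=1$.

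For part (ii), the plan is to verify the four claims in the order they are listed. Coincidence of $P_j$ and $A_j$ on $\mathrm{span}\{h_j\}$ is checked on a generator: $A_j(c h_j)=c\la_j h_{j+1}$ and $P_j(c h_j)=\la_j\nu_j(c h_j)h_{j+1}=c\la_j h_{j+1}$. Next I establish the commutation identities
\begin{equation*}
A_{j+1}P_j=P_{j+1}P_j=P_{j+1}A_j
\end{equation*}
by evaluating each side on a generic $g$: the left equality uses $A_{j+1}h_{j+1}=\la_{j+1}h_{j+2}$, while the right equality uses the dual relation $\nu_{j+1}(A_j g)=(A_j^*\nu_{j+1})(g)=\la_j\nu_j(g)$. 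The iterated formula $P_j^n=\la_{j,n}\nu_j(\cdot)h_{j+n}$ then follows by a straightforward induction on $n$, invoking $\nu_{j+k}(h_{j+k})=1$ at each step to collapse the action of the middle operators.

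The only claim requiring a small trick is the decomposition $A_j^n=P_j^n+E_j^n$. The plan is to observe that the two commutation identities above immediately give
\begin{equation*}
P_{k+1}E_k=P_{k+1}A_k-P_{k+1}P_k=0,\qquad E_{k+1}P_k=A_{k+1}P_k-P_{k+1}P_k=0
\end{equation*}
for every $k$. Expanding
\begin{equation*}
A_j^n=(P_{j+n-1}+E_{j+n-1})\circ\cdots\circ(P_j+E_j)
\end{equation*}
into $2^n$ words in $P$'s and $E$'s, any word that is not monochromatic must contain an adjacent pair of the form $P_{k+1}E_k$ or $E_{k+1}P_k$, and therefore vanishes. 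Only the two pure strings survive, yielding $A_j^n=P_j^n+E_j^n$. I do not anticipate any serious obstacle; the lemma is essentially a bookkeeping exercise formalizing the intuition that $P_j$ is the rank-one spectral projector of $A_j$ associated with the eigendirection $(h_j,\nu_{j+1})$.
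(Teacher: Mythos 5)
Your proof is correct and takes essentially the same approach as the paper: both verify each claim by direct computation from the eigen-relations $A_j h_j=\la_j h_{j+1}$, $A_j^*\nu_{j+1}=\la_j\nu_j$, and the normalization $\nu_j(h_j)=1$. The only cosmetic difference is in establishing $A_j^n=P_j^n+E_j^n$, where the paper invokes a short induction on $n$ while you expand the $n$-fold product into $2^n$ words and observe that the annihilation identities $P_{k+1}E_k=E_{k+1}P_k=0$ kill every mixed word; these are equivalent arguments resting on the same orthogonality.
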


\begin{proof}
(i)
$\DS
\nu_{j}\left(\nu_{j}(g)h_{j}\right)=\nu_j(g)\nu_j(h_j)=\nu_j(g).
$
\vskip0.1cm

 (ii) We have $P_j h_{j}=\la_j\nu_j(h_j)h_j=\la_j h_{j+1}$. So $A_j$ and $P_j$ coincide on $\eta_j$. 
Next, the identity $P_j^n=\la_{j,n}\nu_{j}(\cdot)h_{j+n}$ follows by induction on $n$. For $n=1$ is it just the definition of $P_j$, and to prove the induction step, if  $P_j^n=\la_{j,n}\nu_{j}(\cdot)h_{j+n}$   then 
$$
P_j^{n+1}(g)=P_{j+n}(P_j^{n}g)=\la_{j,n}\nu_{j}(g)P_{j+n}(h_{j+n})=\la_{j,n}\nu_j(g)\la_{j+n}h_{j+n+1}=\la_{j,n+1}\nu_j(g)h_{j+n+1}.
$$
 Since $(A_j)^*\nu_{j+1}=\la_j\nu_j$ we have $A_{j+1}P_j=P_{j+1}A_j=P_{j+1}P_j$, 
because all three expression above coincide with the operator
$$
g\to \la_j\la_{j+1}\nu_j(g)h_{j+2}=P_j^2(g).
$$
Using this the proof that $E_j^n=A_{j}^n-P_{j}^n$ proceeds by induction on $n$.
\end{proof}

\begin{lemma}\label{ExConvLemma}
Suppose that Assumption \ref{Ass} holds  and let $C$ be a constant such that
$\DS \sup_j\|Q_j\|\leq C$. Then for all $g\in B_j$,
\begin{equation}\label{ExCon z=0}
\|A_j^n g-\la_{j,n}\nu_{j}(g)h_{j+n}\|=\|A_j^n-P_j^n\|
\leq (C+1)C_0\|g\|\del_0^n.
\end{equation}
\end{lemma}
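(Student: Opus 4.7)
The plan is to use the decomposition $g = Q_j g + (g - Q_j g)$ to split $g$ into its ``invariant part'' and its ``contracting part'', and then apply the exponential decay hypothesis \eqref{ExCon z=0.0} to the latter.

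More concretely, I would proceed as follows. First, observe that by Lemma~\ref{OpLem}(i) we have $\nu_j(g - Q_j g) = \nu_j(g) - \nu_j(g) = 0$, so $g - Q_j g$ lies in the kernel of $\nu_j$ and assumption \eqref{ExCon z=0.0} applies to it. Next, since $Q_j g = \nu_j(g) h_j$, the relation $A_j h_j = \lambda_j h_{j+1}$ iterates to give $A_j^n(Q_j g) = \nu_j(g) \lambda_{j,n} h_{j+n} = P_j^n g$, where the last equality is the formula for $P_j^n$ from Lemma~\ref{OpLem}(ii). Consequently
\[
A_j^n g - P_j^n g = A_j^n g - A_j^n(Q_j g) = A_j^n(g - Q_j g).
\]

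Now apply \eqref{ExCon z=0.0} to $g - Q_j g$ (legitimate by the observation above) to get
\[
\|A_j^n g - P_j^n g\| \leq C_0 \|g - Q_j g\| \delta_0^n.
\]
Finally, the triangle inequality together with the assumed bound $\|Q_j\| \leq C$ yields $\|g - Q_j g\| \leq (1+C)\|g\|$, which combined with the previous display gives the desired estimate $\|A_j^n g - P_j^n g\| \leq (C+1) C_0 \|g\| \delta_0^n$, and since $P_j^n g = \lambda_{j,n} \nu_j(g) h_{j+n}$ this is exactly \eqref{ExCon z=0}.

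There is no serious obstacle here: every ingredient is already contained in Assumption~\ref{Ass}(ii) and Lemma~\ref{OpLem}. The only mild subtlety is making sure the splitting $g = Q_j g + (g - Q_j g)$ really does land the second summand in the subspace $\ker \nu_j$ on which the contraction hypothesis is formulated, and Lemma~\ref{OpLem}(i) is precisely the statement that $Q_j$ is a projection fixing $\nu_j$, which takes care of this.
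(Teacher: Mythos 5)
Your proof is correct and follows essentially the same route as the paper: set $R_j g = g - Q_j g$, note $\nu_j(R_j g) = 0$, identify $A_j^n g - \lambda_{j,n}\nu_j(g)h_{j+n}$ with $A_j^n(R_j g)$, and apply \eqref{ExCon z=0.0} together with $\|R_j\| \leq 1 + C$. You merely spell out the intermediate identity $A_j^n(Q_j g) = P_j^n g$ (which the paper leaves implicit), so there is no substantive difference.
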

\begin{proof}
Set $R_j g=g-Q_j g$. By Lemma \ref{OpLem},\;  $\nu_{j}(R_j g)=0$. So by \eqref{ExCon z=0.0} applied to $R_jg$ %instead of $g$ we have

\hskip2.5cm $\DS 
\|A_j^n g-\la_{j,n}\nu_{j}(g)h_{j+n}\|=
\|A_j^n(R_j g)\|\leq (C+1)C_0\|g\|\del_0^n.
$
\end{proof}

\begin{lemma}\label{SpecRadLemma}
Let $R_j:B_j\to B_{j+1}$ be a sequence of operators. Suppose that there are constants 
$\del_0,C_0>0$ such that
 for all $j,n$ we have
$$
\|R_j^{n}\|\leq C_0 \del_0^n.
$$
Then  for every $\del_1>\del_0$ there exist constants $\ve_0>0$ and
$C_1>0$ with the following property. For any other family $S_j:B_j\to B_{j+1}$ of linear operators satisfying
 $$
 \sup_{j}\|R_j-S_j\|<\ve_0
 $$
 we have 
$$
\|S^{n}_j\|\leq C_1 \del_1^n.
$$
\end{lemma}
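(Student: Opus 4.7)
The plan is a standard blocking argument: pick a block length $N$ at which the base sequence $R_j^N$ is strictly contractive relative to $\del_1^N$, and then treat $S_j^N$ as a small perturbation of $R_j^N$.

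First I would choose $N\in\bbN$ with $C_0\del_0^N\leq\tfrac{1}{3}\del_1^N$; this is possible since $\del_0<\del_1$. The key step is then to show that, for $\ve_0$ sufficiently small (depending on $N,C_0,\del_0,\del_1$), $\|S_j^N\|\leq \del_1^N$ uniformly in $j$. For this I would use the telescoping identity
\[
S_j^N-R_j^N=\sum_{k=0}^{N-1}S_{j+k+1}^{N-k-1}\circ(S_{j+k}-R_{j+k})\circ R_{j}^{k},
\]
bound $\|R_j^k\|\leq C_0\del_0^k$ by hypothesis, and control the short $S$-blocks by the crude estimate $\|S_{j+k+1}^{N-k-1}\|\leq(\|R_{j}\|+\ve_0)^{N-k-1}\leq(C_0\del_0+1)^N=:M_0$ once $\ve_0\leq 1$ (using that $\|R_j\|=\|R_j^1\|\leq C_0\del_0$). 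This gives $\|S_j^N-R_j^N\|\leq N\ve_0 M_0 C_0\max(1,\del_0^{N-1})$, which is at most $\tfrac{1}{3}\del_1^N$ provided $\ve_0$ is small enough. Combined with $\|R_j^N\|\leq\tfrac{1}{3}\del_1^N$, this yields $\|S_j^N\|\leq\tfrac{2}{3}\del_1^N\leq\del_1^N$ for all $j$.

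To pass from a block estimate to a bound for every $n$, I would write $n=qN+r$ with $0\leq r<N$ and factor
\[
S_j^n=S_{j+qN}^{r}\circ S_{j+(q-1)N}^{N}\circ\cdots\circ S_{j+N}^{N}\circ S_{j}^{N}.
\]
Each of the $q$ full-block factors has norm at most $\del_1^N$ by the previous paragraph (applied at the shifted starting index $j+iN$), and the residual factor $S_{j+qN}^r$ is again bounded by $M_0$. Hence $\|S_j^n\|\leq M_0\del_1^{qN}\leq C_1\del_1^n$ with $C_1:=M_0\max(\del_1^{-(N-1)},1)$, which is the desired conclusion.

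The only place calling for care is the order of quantifiers: $N$ must be frozen first, so that $M_0=M_0(N)$ is a genuine constant, and only afterwards is $\ve_0$ chosen small enough, with a dependence on $N,M_0,C_0,\del_0,\del_1$. Beyond this bookkeeping, no real obstacle is expected; the argument is just a non-autonomous analogue of the standard perturbation argument showing that exponential decay rates of $\|R^n\|$ are stable under small operator-norm perturbations.
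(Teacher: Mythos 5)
Your proof is correct and follows essentially the same path as the paper's: pick a block length $N$ at which $C_0\del_0^N$ is a definite fraction of $\del_1^N$, show $\|S_j^N\|\leq\del_1^N$ for $\ve_0$ small via a perturbation bound on $S_j^N-R_j^N$, then decompose $n=qN+r$ and multiply the block estimates. The only expository difference is that you spell out the telescoping identity and the resulting Lipschitz estimate $\|S_j^N-R_j^N\|\leq C(N,C_0,\del_0)\,\ve_0$ explicitly, whereas the paper invokes that bound without displaying it; this adds nothing new to the argument but is a harmless (and slightly more self-contained) way to present the same step.
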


\begin{proof}
Let $\del_1>\del_0$ and let $n_0$ be so that $C_0\del_0^{n_0}<\frac12\del_1^{n_0}$. Let $\ve_0>0$ be so small such that
$$
 \sup_{j}\|R_j-S_j\|<\ve_0
$$
implies $\DS \sup_j\|S_j^{n_0}\|<\del_1^{n_0}$. It is indeed possible to find such $\ve_0$ since 
$$
\|S_j^{n_0}-R_j^{n_0}\|
\leq C(C_0,\del_0,n_0)\ve_0
$$
for some constant $C(C_0,\del_0,n_0)$ which depends only on $C_0,\del$ and $n_0$.

Now, given $n\in\bbN$ let us write $n=kn_0+\ell$ for some $0\leq \ell<n_0$. Then\\
$\DS
\|S_j^{n}\|\leq \prod_{u=0}^{k-1}\|S_{j+un_0}^{n_0}\|\cdot \|S_{j+kn_0}^\ell\|\leq (\ve_0+C_0\del_0)^{\ell}\del_1^{-\ell}\del_1^n.
$
\end{proof}

\subsection{Proof of Theroem \ref{PertThm}}
 
\begin{proof}%[Proof of Theorem \ref{PertThm}] 
We first prove the existence of triplets.  Denote
$$
 \bH_{j}=\{g\in B_j:\,\nu_{j}(g)=0\}
$$ 
Since $A_j^*\nu_{j+1}\!\!=\!\!\la_j\nu_{j}$ we have 
$\DS
A_j \bH_{j}\subset \bH_{j+1}.
$
Also, for  every $g_j\in \bH_j$ we have $\DS \nu_{j}(h_{j}+g_j)\!\!=\!\!1$
and 
$$
\nu_{j+1}\big(A_j(h_j+g_j)\big)=\la_{j}(A_j^*\nu_{j+1})(h_{j}+g_j)=
\la_{j}\nu_{j}(h_{j}+g_j)=\la_{j}.
$$
%Therefore, there exists $\ve_k>0$ so that if $\sup_j\|A_j^{(z)}-A_j\|<\ve_k$ and $\|g\|<\ve_k$ then 
%$$
%\left|\nu_{k,j+1}\big(A_j^{(z)}(h_{k,j}+g)\big)\right|\geq |\la_{k,j}|/2\geq c_0/2. 
%$$
%Thus, there exists $r>0$ so that if $z\in \textbf{C}$ satisfies $\|z\|\leq r$ and $\|g\|<\ve_k$ then 
%$$
%\left|\nu_{k,j+1}\big(A_j(h_{k,j}+g)\big)\right|\geq |\la_{k,j}|/2\geq c_0/2.. 
%$$
%Let us define norms of sequences $(\la_j)$, $(h_j)$ and $(\nu_j)$ by the supremum of the norms of the individual elements. 
Consider the space $\textbf{H}$ of sequences $\bar g=(g_j)$ so that $g_j\in \bH_j$ for all $j$, 
equipped with the norm $\DS \|\bar g\|:=\sup_j \|g_j\|<\infty$.
Define a function $F:\textbf{C}\times\cY\times \textbf{H}\to\textbf{H}$ by
$$
\left(F(t,\bar g)\right)_j=A_{j-1}^{(t)}(h_{j-1}+g_{j-1})-\nu_{j}\big(A_{j-1}^{(t)}(h_{j-1}+g_{j-1})\big)\left(h_{j}+g_j\right).
$$
Then $F_k(0,0)=0$, and %Note also that $$\nu_{k,j}\left(\big(F(z,\bar g)\big)\right)_j=0$$ for all $%$.  
 $F$  is analytic in $t$ (in the case $C^k$ dependence on $t$ this map is differentiable $k$ times).  

We claim that  the partial derivative $\Phi:=(\Pt F/\Pt \textbf{H})(0, 0)$ 
%of $F$ at the point $(0,0)$ in the $\textbf{H}$ direction 
is an isomorphism. Indeed, a direct computation shows that
$\DS
\left(\Phi \bar g\right)_j=A_{j-1}g_{j-1}-\la_{j}g_j.
$
%and so
%\[
%dF_k/d\textbf{H}(0,y^0,0)=A-\la_kId.
%\]
%Let us show that $\Phi$ is an isomorphism of $\textbf{H}$. 
Assume first that there is $\bar g\in\textbf{H}$ so that 
\begin{equation}\label{e1}
 A_{j-1}g_{j-1}=\la_{j}g_j.   
\end{equation}
When considered as maps $A_{j}:\textbf{H}_j\to\textbf{H}_{j+1}$ we have 
\begin{equation}\label{e2}
\|A_j^{n}g_j\|\leq C_0\|g_j\|\del_0^n    
\end{equation}
since on $\textbf{H}_j$ the functionals $\nu_{j}$ vanish. Iterating  \eqref{e1} we see that $A_j^{n}g_j=\la_{j,n}g_{j+n}$. Plugging in $j-n$ instead of $j$ in \eqref{e2}  we get that $g_j=\frac{A_{j-n}^ng_{j-n}}{\la_{j-n,n}}$ and so
$$
\|g_j\|\leq C_0\|\bar g\|\frac{\del_0^n}{|\la_{j,n}|}\leq C'\ve^n\to0
$$ 
for some $\ve\in(0,1)$ and a constant $C'$, 
where in the second inequality we have used Assumption \ref{Ass}(ii). 
 Hence $\bar g=0$.

Now we show that $\Phi$ is surjective. Take $\bar g\in \textbf{H}$. Set 
\[
h_j=-\sum_{n=0}^{\infty}\frac{A^n_{j-n}g_{j-n}}{\la_{j-n,n}}.
\]
This  series converges uniformly in $j$ since 
$$
\left\|\frac{A^{n}_{j-n}g_j}{\la_{j-n,n}}\right\|\leq C'\|\bar g\|\ve^{n}
$$
where $\ve$ and $C'$ are like in the previous estimates.
It is immediate that
$$
A_{j-1}h_{j-1}-\la_{j}h_j=\la_{j}g_j.
$$
Applying the implicit function theorem in Banach spaces (e.g. \cite[Theorem 3.2]{IFT})
 we get that there is $\ve_0>0$ so that for every $t\in B_{\textbf{C}}(0,\ve_0)$ there is $\bar g^{(t)}=(g_{j}^{(t)})_j\in \textbf{H}$ 
so that 
$$F_k(t,\bar g^{(t)})=0.$$ 	
Moreover, the function $t\to\bar g^{(t)}$ is analytic 
and so by possibly reducing $\ve_0$ we can assume that it is bounded 
(in the case of $C^k$ dependence on $t$ we get that this map is of class $C^k$).

%Applying the implicit function for complex Banach spaces we get that there is $s_0>0$ so that for any $z\in\textbf{C}$ with $\|z\|\leq s_0$ there is a analytic function $\bar g_k^{(z)}=(g_{k,j}^{(z)})_j\in \textbf{H}$ so that $F_k(z,\bar g^{(z)})=0$. Now we can take $h_{k,j}^{(z)}=h_{k,j}+g_{k,j}^{(z)}$ and $$\la_{k,j}(z)=\nu_{k,j}\big(A_{j-1}^{(z)}(h_{k,j-1}+g_{k,j-1}^{(z)})\big)=\la_{j,k}+\nu_{k,j}\big(A_{j-1}^{(z)}(g_{k,j-1}^{(z)})\big).$$

%Now, I need to rewrite the dual part also with the two variables (z,y)...the proof should be the same I think...
Now we construct the families $\nu_{j}^{(t)}$. First, we obtain from \eqref{ExCon z=0} that
\begin{equation}\label{ExCon z=0, dual}
\|(A_j^n)^*-\la_{j,n}h_{j+n}^* \nu_{j}\|\leq C_0\del_0^n
\end{equation}
where  $h_{j+n}^*(\mu)=\mu(h_{j+n})$.
Let $\textbf{E}_j\subset B_j^*$ be the space of all functionals $\mu_j$ such that $\mu_j(h_{j})=0$. Then 
$\DS
(A_j)^*\textbf{E}_{j+1}\to \textbf{E}_j 
$
 since 
 $$
 (A_{j})^*\mu_{j+1}(h_{j})\!\!=\!\!\mu_{j+1}(A_j h_{j})=\la_{j}\mu_{j+1}(h_{j+1})=0.
$$

Let $\textbf{E}$ be the Banach space of sequences $\bar \mu=(\mu_j)$ such that $\mu_j\in \textbf{E}_j$ for all $j$ and $\DS \|\bar\mu\|:=\sup_j\|\mu_j\|<\infty$. Define 
$$
\big(G(t,\bar\mu)\big)_j=(A_j^{(t)})^*(\nu_{j+1}+\mu_{j+1})-\la_{j}(t)(\nu_{j}+\mu_{j})
$$
where $\lambda_j(t)=\nu_j(h_j^t).$
Then $G(0,0)=0$ and $G$ is a well defined  function on $\{t: \|t\|<\ve_0\}$ for some $\ve_0>0$. Moreover, $G$ is  analytic in $t$.
We consider $G$ as a function from a neighborhood of $(0,0)\in\textbf{C} \times\textbf{E}$ to $\textbf{E}$.

Now, a direct computation shows that the derivative of $G$ at $(0,0)$ in the direction $\textbf{E}$ is 
the operator given by
$$
\Psi:=(\Pt G_k/d\textbf{E})(0,0)\bar\mu)_j=(A_j)^*\mu_{j+1}-\la_{j}\mu_{j}.
$$
Let us show that $\Psi$ is injective. If $\bar\mu$ belongs to its kernel 
 then $(A_j)^*\mu_{j+1}=\la_{j}\mu_j$, and so $(A_j^n)^{*}\mu_{j+n}=\la_{j,n}\mu_{j}$.  
 Using \eqref{ExCon z=0, dual} we get that 
$\DS
\|\mu_j\|\leq C\ve^n
$
for some constants $\ve\in(0,1)$ and $C>0$. Therefore $\bar\mu=0$. 

Next, $\Psi$ is also surjective since for any $\bar\mu\in \textbf{E}$ we have
$\DS
(A_j)^*\ka_{j+1}-\la_{j}\ka_{j}=\la_{j}\mu_j$
where
$\DS \ka_{j}=-\sum_{n=0}^\infty \frac{(A_{j+1}^n)^*\mu_{j+n+1}}{\la_{j,n}}
$  (the convergence of this series follows from \eqref{ExCon z=0, dual}).

Hence, by the implicit function theorem there is an analytic
 function  $t\mapsto \bar\ka^{(t)}=(\ka_{j}^{(t)})_{j}$ with values in $\textbf{E}$ so that 
$$
(A_j^{(t)})^*(\nu_{kj+1}+\ka_{j+1}^{(t)})-\la_{j}(t)(\nu_{j}+\ka_{j}^{(t)})=0.
$$
Take $\nu_{j}^{(t)}=\nu_{j}+\ka_{j}^{(t)}$
and notice that $\nu_{j}^{(t)}(h_{j}^{(t)})$ does not depend on $j$. Indeed,
we have 
$$
\la_j(t)\nu_{j+1}^{(t)}(h_{j+1}^{(t)})=\nu_{j+1}^{(t)}(\la_j(t)h_{j+1}^{(t)})
%$$$$
=\nu_{j+1}^{(t)}(A_j^{(t)}h_j^{(t)})=(A_j^{(t)})^*\nu_{j+1}^{(t)}(h_j^{(t)})=\la_j(t)\nu_{j}^{(t)}(h_j^{(t)}).
$$
We conclude that there is a  function $c(t)$ such that $\nu_{j}^{(t)}(h_{j}^{(t)})=c(t)$ for every $j\geq0$. The functions $c(t)$ are bounded and analytic and $ c(0)=1$. Thus, by decreasing $\|t\|$ if necessary we can assume that  $|c(t)|\geq\frac12$. Next, by replacing $h_{j}^{(t)}$ with $h_{j}^{(t)}/c(t)$ (or $\nu_{j}^{(t)}$ by $\nu_{j}^{(t)}/c(t)$) we can just assume that $c(t)=1$.  Like in the construction of $g^{(t)}$, in the case $t\to (A_j^{(t)})$ is of class $C^k$ all the above functions of $t$ are differentiable $k$ times.
\vskip0.2cm

Finally, the exponential convergence follows by Lemma \ref{SpecRadLemma} applied with $R_j=A_j-P_j$ and $S_j=A_j^{(t)}-P_{j}^{(t)}$ (when $t$ is close enough to $0$)
where 
$\DS
P_{j}^{(t)}=\la_{j}(t)\nu_{j}^{(t)}(\cdot)h_{j+1}^{(t)}
$
taking into account that by Lemma \ref{OpLem},
$\DS
S_j^n=A_j^{t,n}-P_{j}^{t,n}=A_{j}^{t,n}-\la_{j,n}(t)\nu_{j}^{(t)}(\cdot)h_{j+n}^{(t)}.
$
\end{proof}

\end{document}